\theoremstyle{plain}
\newtheorem{theorem}{Theorem}
\newtheorem{lemma}{Lemma}
\newcommand{\abs}[1]{\left| #1 \right|}
\newcommand{\norme}[1]{\left\| #1 \right\|}
\newcommand{\N}{\mathbb{N}}
\newcommand{\Z}{\mathbb{Z}}
\newcommand{\R}{\mathbb{R}}
\newcommand{\restrict}[1]{\raisebox{-.5ex}{$|$}_{#1}} 
\newcommand{\Proba}{\mathbb{P}}
\newcommand{\Ent}[1]{\left\lfloor #1\right\rfloor}
\newcommand{\Ceil}[1]{\left\lceil #1\right\rceil}
\newcommand{\demi}{\frac{1}{2}}
\newcommand{\eqninfty}{\stackrel{n\to\infty}{\sim}}
\newcommand\numberthis{\addtocounter{equation}{1}\tag{\theequation}}
\newcommand{\quadet}{\quad\text{and}\quad}
\newcommand{\qquadet}{\qquad\text{and}\qquad}
\newcommand{\quadavec}{\quad\text{with}\quad}
\newcommand{\quadimplique}{\quad\Rightarrow\quad}
\newcommand{\guillemets}[1]{``#1''}
\newcommand{\calE}{\mathcal{E}}
\newcommand{\calS}{\mathcal{S}}
\newcommand{\calC}{\mathcal{C}}
\newcommand{\calF}{\mathcal{F}}
\newcommand{\calT}{\mathcal{T}}
\newcommand{\calP}{\mathcal{P}}
\newcommand{\calA}{\mathcal{A}}
\newcommand{\s}{\mathfrak{s}}
\newcommand{\PCM}{\textbf{P}^{A}}
\newcommand{\boxO}{B_0}
\newcommand{\boxU}{B_0'}
\newcommand{\boxI}{B_1}
\newcommand{\boxII}{B_2}
\newcommand{\boxIII}{B_3}
\newcommand{\Pl}{\calP^{\lambda}}
\newcommand{\Pinf}{\calP^{\infty}}
\newcommand{\PlA}{\calP^{\lambda,A}}
\newcommand{\Pm}{\calP_{\mu}}
\newcommand{\Pml}{\calP_{\mu}^{\lambda}}
\newcommand{\Em}{\calE_{\mu}}
\newcommand{\El}{\calE^{\lambda}}
\newcommand{\Eml}{\calE_{\mu}^{\lambda}}
\newcommand{\PARW}{\Proba_{\mu}^{\lambda}}
\newcommand{\torus}{{\Z_n^d}}
\newcommand{\sle}{node[blue!50!black]{s}}
\newcommand{\act}{node[red!50!black]{1}}
\begin{document}

\begin{frontmatter}

\title{Active Phase for Activated Random Walks on the Lattice in all Dimensions}
\runtitle{Active phase for ARW on~$\mathbb{Z}^d$}

\begin{aug}
\author[A]{\inits{N. F.}\fnms{Nicolas}~\snm{Forien}\ead[label=e1]{nicolas.forien@uniroma1.it}}
\and
\author[B]{\inits{A. G.}\fnms{Alexandre}~\snm{Gaudilli\`ere}\ead[label=e2]{alexandre.gaudilliere@math.cnrs.fr}}
\address[A]{Aix Marseille Univ, CNRS, I2M, Marseille, France, Sapienza Universit\`a di Roma, Dipartimento di Matematica, Roma, Italy\printead[presep={,\ }]{e1}}

\address[B]{Aix Marseille Univ, CNRS, I2M, Marseille, France\printead[presep={,\ }]{e2}}
\end{aug}

\begin{abstract}
We show that the critical density of the Activated Random Walk model on~$\Z^d$ is strictly less than one when the sleep rate~$\lambda$ is small enough, and tends to~$0$ when~$\lambda\to 0$, in any dimension~$d\geqslant 1$.
As far as we know, the result is new for~$d=2$.

We prove this by showing that, for high enough density and small enough sleep rate, the stabilization time of the model on the~$d$-dimensional torus is exponentially large.
To do so, we fix the the set of sites where the particles eventually fall asleep, which reduces the problem to a simpler model with density one.
Taking advantage of the Abelian property of the model, we show that the stabilization time stochastically dominates the escape time of a one-dimensional random walk with a negative drift.
We then check that this slow phase for the finite volume dynamics implies the existence of an active phase on the infinite lattice.
\end{abstract}

\begin{abstract}[language=french]
Nous démontrons que la densité critique du modèle des Marches Aléatoires Activées sur~$\Z^d$ est strictement inférieure à~$1$ quand le taux d'endormissement~$\lambda$ est suffisamment petit, et tend vers~$0$ quand~$\lambda\to 0$, en toute dimension~$d\geqslant 1$.
À notre connaissance, le résultat est nouveau pour~$d=2$.

Nous obtenons ce résultat en prouvant que, pour une densité suffisamment élevée et un taux d'endormissement suffisamment petit, le temps de stabilisation du modèle sur le tore en dimension~$d$ est exponentiellement grand.
Pour cela, nous fixons l'ensemble des sites sur lesquels les particules s'endorment, ce qui réduit le problème à un modèle plus simple avec densité~1.
En utilisant la propriété d'Abélianité du modèle, nous montrons que le temps de stabilisation domine stochastiquement le temps d'atteinte de 0 pour une marche aléatoire en dimension~1 avec une dérive négative.
Nous vérifions ensuite que cette phase de stabilisation lente pour la dynamique en volume fini implique l'existence d'une phase active sur le réseau infini.
\end{abstract}

\begin{keyword}[class=MSC]
\kwd[Primary ]{60K35}
\kwd[; secondary ]{82B26}
\end{keyword}

\begin{keyword}
\kwd{Activated random walks}
\kwd{phase transition}
\kwd{self-organized criticality}
\end{keyword}

\end{frontmatter}


\section{Introduction}

The Activated Random Walk model is a model of interacting particles which attracted much interest since the seminal works of Rolla, Sidoravicius and Dickman~\cite{Rolla08,RS12,DRS10}.
This model emerged as a modification of another process suggested by R. Durrett in 1996 and known as the frog model~\cite{AMP02a,AMP02b,GS09,HJJ17,JJ18}, and it can also be seen as a particular case of some models for the spread of a rumor or an infection~\cite{KS05}.

\subsection{Informal definition of the model}
\label{sectionDefARW}

Informally, the Activated Random Walk model is defined as follows.
Let us consider a vertex-transitive and locally finite graph~$G=(V,\,E)$, with, on each site of the graph, a certain number of particles, which may be either active or sleeping.

Active particles perform independent continuous-time random walks on the graph, with jump rate~1, meaning that each active particle is equipped with a Poisson clock of intensity~1 and, each time this clock rings, this particle jumps to a uniformly chosen neighbouring site of the graph.

In addition to this, when an active particle is alone on a site, it falls asleep with rate~$\lambda\in[0,\infty]$.
Thus, each particle is equipped with a second Poisson clock, with intensity~$\lambda$: when this clock rings, if there is no other particle on the same site, the active particle falls asleep.
Sleeping particles stop moving, until another active particle arrives on the same site.
When this happens, the sleeping particle is waken up and turns back into the active state.

The initial configuration of particles is assumed to be distributed according to a translation-ergodic distribution, with no sleeping particles and an average density of active particles~$\mu\in(0,\infty)$.
Such a model is well defined if the graph is finite, and the above informal definition can also be made rigorous on~$\Z^d$~\cite{Rolla20}.

One motivation to study this model is its connection with the concept of self-organized criticality~\cite{BTW87,Jensen98,Dhar06,These,HALWBDW20}, which aims to describe some physical systems which present a critical-like behaviour without the need to finely tune their parameters to precise values (unlike in an ordinary phase transition, where the critical behaviour is only observed at the critical point).

The connection with self-organized criticality lies in the following variant of the model.
Consider the model on a finite box where particles are killed when they jump out of the box.
At each step, we add one active particle to a uniformly chosen site, and we let the box evolve until it reaches a stable configuration.
It is conjectured that, when the size of this box tends to infinity, the average density of particles converges to the critical density of the model on~$\Z^d$, as defined below (see for example~\cite{BGH18} for a presentation of this conjecture).
This property attracted some attention on the Activated Random Walk model, with the hope that this model would be more analytically tractable than other related models showing self-organized criticality, like the Stochastic Sandpile model~\cite{ST17} or the Manna model~\cite{Manna91}.

\subsection{Some previous results on the phase transition for the model on the lattice}

The Activated Random Walk model on~$\Z^d$ presents what is called an absorbing state phase transition.
Namely, there is a critical curve in the plane~$(\mu,\,\lambda)$ separating two different regimes: a fixating phase where, almost surely, the model eventually reaches an absorbing state, and an active phase where the model never stabilizes.
Here, the absorbing states of the system, also called stable configurations, are the states where all the particles are sleeping.

We say that the system fixates if each site is visited only finitely many times.
In this case, the configuration on any finite set eventually becomes stable, with only sleeping particles and empty sites.
Otherwise, we say that the system stays active.
The following key result of Rolla, Sidoravicius and Zindy establishes the existence of a universal phase transition for the model on~$\Z^d$:

\begin{theorem}[\cite{RSZ19}]
\label{thmPhaseTransition}
For every dimension~$d\geqslant 1$, and for every sleep rate~$\lambda\in(0,\infty]$, there exists~$\mu_c(\lambda)$ such that, for every translation-ergodic initial distribution with no sleeping particles and an average density of active particles~$\mu$, the Activated Random Walk model on~$\Z^d$ with sleep rate~$\lambda$ almost surely fixates if~$\mu<\mu_c(\lambda)$, whereas it almost surely stays active if~$\mu>\mu_c(\lambda)$.
\end{theorem}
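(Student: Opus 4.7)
The plan is to build the ARW dynamics via the site-wise Diaconis--Fulton representation: attach to each site $x \in \Z^d$ an independent stack of i.i.d.\ instructions, each of which is either a jump in one of the $2d$ coordinate directions or a sleep attempt, with respective weights $1$ and $\lambda$. A particle at $x$ consumes the topmost unused instruction on the $x$-stack; a sleep instruction is effective only if the particle is alone, otherwise it is discarded. The key consequence is the Abelian property: the total number of instructions used at each site, the odometer $m(x) \in \{0,1,\dots,\infty\}$, does not depend on the order in which legal moves are performed. Fixation is then equivalent to $m(x) < \infty$ for every $x$, which by translation invariance reduces to $m(0) < \infty$ almost surely.

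With this in hand I would proceed in two steps. First, realize the infinite-volume odometer as an increasing limit of finite-volume odometers obtained by truncating the initial data to boxes $\Lambda_n = [-n,n]^d$ and running the Abelian procedure until no legal moves remain. The event $\{m_\infty(0) = \infty\}$ is translation-invariant, and the joint law of the initial data together with the instruction stacks is translation-ergodic (the initial law is ergodic by hypothesis and the stacks are i.i.d.), so this event has probability $0$ or $1$; this gives the fixation/activity dichotomy for any fixed initial law. Second, the Abelian construction is manifestly monotone in the initial configuration: if $\eta \leq \eta'$ pointwise under a common instruction field, then $m^\eta(x) \leq m^{\eta'}(x)$. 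Hence if the system started from $\eta'$ fixates then so does the system started from $\eta$, and conversely activity propagates upward in the initial configuration.

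The delicate step, and the main obstacle I foresee, is universality of the threshold: the dichotomy at density $\mu$ must not depend on \emph{which} translation-ergodic initial law of density $\mu$ one starts from. I would establish this with a sandwich argument. For any ergodic law $\nu$ of density $\mu$ and any $\epsilon > 0$, independent site-wise thinning produces an ergodic law of density $\mu - \epsilon$ coupled to lie componentwise below $\nu$, while superposing an independent Poisson field produces an ergodic law of density $\mu + \epsilon$ lying componentwise above $\nu$. Combined with the monotonicity of the previous step, this sandwiches the activity threshold of $\nu$ within $\epsilon$ of the threshold of a fixed reference family (for instance, the Poisson one), uniformly in $\nu$. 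Sending $\epsilon \to 0$ identifies a single critical density $\mu_c(\lambda)$ common to every translation-ergodic initial law of density $\mu$. The technical heart of the argument is checking that thinning and superposition preserve translation ergodicity and yield the claimed stochastic ordering: merely sharing a density does not allow componentwise comparison of two ergodic laws, so one cannot compare $\nu$ and $\nu'$ of equal density directly and must instead route through the sandwiching family.
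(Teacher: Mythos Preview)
The paper does not prove this theorem; it is quoted from \cite{RSZ19} as background, so there is no in-paper proof to compare against. I therefore assess your proposal on its own.

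Your first two steps (site-wise representation, Abelian property, $0$--$1$ law from ergodicity of the joint law of initial data and instruction stacks, monotonicity of the odometer in the initial configuration) are correct and standard. The gap is in the universality step. Your sandwich produces laws $\nu_-$ (thinned) and $\nu_+$ (superposed with Poisson) of densities $\mu-\varepsilon$ and $\mu+\varepsilon$ satisfying $\nu_-\leqslant\nu\leqslant\nu_+$ componentwise, but \emph{both $\nu_-$ and $\nu_+$ depend on $\nu$}. They are not Poisson, and there is no componentwise comparison between $\nu_-$ and $\mathrm{Poisson}(\mu-\varepsilon)$, nor between $\nu_+$ and $\mathrm{Poisson}(\mu+\varepsilon)$. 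So the sandwich does not connect $\nu$ to any fixed reference family, and you have not shown that two different ergodic laws $\nu,\nu'$ of the same density share the same fixation/activity verdict. You correctly diagnose that equal density alone does not give componentwise comparison; the construction you propose does not circumvent this, it only reproduces the difficulty one level down.

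The actual mechanism in \cite{RSZ19} is different and does not go through stochastic domination of initial laws. The key idea exploits the Abelian property more aggressively: in a large box $\Lambda_n$, any ergodic $\nu$ of density $\mu$ has roughly $\mu|\Lambda_n|$ particles by the ergodic theorem, and via legal topplings one can rearrange these particles within the box (e.g.\ collect them at a single site, or match the positions of another configuration) before continuing the stabilization. This makes the odometer at the origin depend, up to controllable error, only on the \emph{number} of particles in large boxes, not on their initial positions, and hence only on $\mu$. That rearrangement step is the missing idea in your outline.
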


This threshold~$\mu_c(\lambda)$ is called the critical density of the model.
These last years, much effort has been directed towards finding upper and lower bounds on this critical density.
As a first step, it was proved in~\cite{Shellef10,AGG10} that~$\mu_c(\lambda)\leqslant 1$ for any value of the sleep rate, by showing that the density of particles is conserved over time.

Then, a natural question is: do we have~$\mu_c(\lambda)\in(0,1)$ for any finite value of the sleep rate~$\lambda$ in any dimension?
It is known that~$\mu_c(\lambda)\geqslant\lambda/(1+\lambda)$ in wide generality~\cite{RS12,ST17,ST18}, but this bound is sharp only when the jump distribution is totally asymmetric~\cite{CRS14}.

When the jumps are biased, it is known that~$\mu_c(\lambda)<1$ for any finite value of the sleep rate~$\lambda$ and that~$\mu_c(\lambda)\to 0$ when~$\lambda\to 0$, in any dimension~\smash{$d\geqslant 1$}~\cite{Taggi16,RT18}.

For unbiased jumps, it was proved that~$\mu_c(\lambda)\to 0$ when~$\lambda\to 0$ in dimension~1 by~\cite{BGH18} and in dimensions~$d\geqslant 3$ in~\cite{ST18}, and it was also shown that~$\mu_c(\lambda)<1$ for any finite value of the sleep rate~$\lambda$, in dimension~1~\cite{HRR20} and in dimensions~\smash{$d\geqslant 3$}~\cite{Taggi19}.
In dimension~1, a more precise bound on the critical density showing that~\smash{$\mu_c(\lambda)=O\big(\sqrt{\lambda}\big)$} when~$\lambda\to 0$ was obtained in~\cite{ARS19}.

\subsection{Existence of an active phase in all dimensions}

For the unbiased case on~$\Z^2$, the best bound available up to now was~$\mu_c(\lambda)\leqslant 1$, and we did not know any choice of~$\mu<1$ and~$\lambda>0$ such that there is no fixation at~$(\mu,\,\lambda)$, which would imply that~$\mu_c(\lambda)<1$ for sufficiently low sleep rate.

Our main result stated below shows that~$\mu_c(\lambda)\to 0$ when~$\lambda\to 0$ in any dimension~$d\geqslant 1$ which, in particular, answers the open problem of the existence of a non-trivial active phase on~$\Z^2$.

\begin{theorem}
\label{mainThm}
In any dimension~$d\geqslant 1$, we have
$$\lim\limits_{\lambda\to 0}\mu_c(\lambda)\ =\ 0\,.$$
More precisely, there exists a constant~$\kappa_d>0$ such that,
for every~$\mu\in(0,1)$ and~$\lambda>0$ which satisfy
\begin{equation}
\label{conditionMuLambda}
\kappa_d(2d\lambda)^\mu\ <\ \mu^\mu (1-\mu)^{1-\mu}\,,
\end{equation}
for every translation-ergodic initial distribution with no sleeping particles and an average density of active particles~$\mu$, the Activated Random Walk model on~$\Z^d$ with sleep rate~$\lambda$ almost surely stays active.

Thus, for any initial active particle density~$\mu\in(0,1)$, there exists~$\lambda>0$ such that, for every translation-ergodic initial distribution with no sleeping particles and an average density of active particles~$\mu$, the Activated Random Walk model on~$\Z^d$ with sleep rate~$\lambda$ almost surely stays active, and for all~\smash{$\lambda>0$} small enough, there exists~$\mu<1$ such that, for every translation-ergodic initial distribution with no sleeping particles and an average density of active particles~$\mu$, the Activated Random Walk model on~$\Z^d$ with sleep rate~$\lambda$ almost surely stays active.
\end{theorem}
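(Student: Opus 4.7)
The plan is to prove that the stabilization time of the Activated Random Walk on the torus $(\mathbb{Z}/n\mathbb{Z})^d$ with $N=\lfloor\mu n^d\rfloor$ initial active particles is exponentially large in $n^d$ with high probability, whenever condition \eqref{conditionMuLambda} holds, and then deduce non-fixation on $\mathbb{Z}^d$ via the standard comparison argument (already used in \cite{Taggi19,HRR20,ST18,BGH18}) which turns slow stabilization in finite volume into an active phase on the infinite lattice. Both directions of the ``more precisely'' part of the statement, as well as the limit $\mu_c(\lambda)\to 0$, follow from \eqref{conditionMuLambda} by elementary calculus, so the whole question reduces to that inequality.

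Since each stable configuration on the torus with $N$ particles consists of exactly one sleeping particle on each site of some set $A\subset(\mathbb{Z}/n\mathbb{Z})^d$ with $|A|=N$, I would first fix such a set $A$ (there are $\binom{n^d}{N}$ choices) and ask for the probability that the dynamics stabilizes in the configuration supported on $A$ within some time $T\leqslant e^{cn^d}$. Using Abelianness I would replace the original dynamics by a modified one in which any particle that falls asleep on a site of $A^c$ is immediately reawakened; the number of operations needed to stabilize this modified dynamics is dominated by the number needed to reach the $A$-supported state in the original one. In the modified model there are $N$ particles and exactly $N$ admissible sleeping sites, so the system is effectively at density one with respect to the admissible resting sites, which is the ``reduction to a density one model'' announced in the abstract. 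Stirling's formula turns the prefactor $\binom{n^d}{N}$ into $\bigl(\mu^\mu(1-\mu)^{1-\mu}\bigr)^{-n^d}$, which is precisely the right-hand side of \eqref{conditionMuLambda} raised to the $n^d$-th power; the $(2d\lambda)^\mu$ factor should then emerge as the cost of each particle correctly settling in $A$.

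To get the stochastic domination by a one-dimensional walk with a drift, I would track the integer $X_t\in\{0,\dots,N\}$ counting the number of sites of $A$ currently carrying a sleeping particle in the modified dynamics; stabilization corresponds to $X_t=N$. At each sleep/wake event, $X_t$ moves up or down by $1$, and the key estimate to establish is that the one-step upward probability is at most of order $\lambda/(1+\lambda)$ times a combinatorial factor bounded by $2d\cdot\kappa_d$, while the downward probability is bounded below by an $n$-independent constant; this would give that $X_t$ is dominated by a biased random walk reflecting at $0$ and hitting $N$ only with probability at most $(\kappa_d\,2d\lambda)^N$ within a window of length $e^{cn^d}$. A union bound over the $\binom{n^d}{N}$ choices of $A$ then yields a probability of stabilization within that exponential window which is at most $\bigl(\kappa_d(2d\lambda)^\mu/(\mu^\mu(1-\mu)^{1-\mu})\bigr)^{n^d}$ up to subexponential corrections, and this goes to $0$ exactly under \eqref{conditionMuLambda}.

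The main technical obstacle is Step~3, i.e.\ proving the one-step comparison with a biased walk uniformly over the current particle configuration and over the set $A$. One needs to pick, via Abelianness, a convenient order in which to inspect events so that, whenever there are still active particles, an incoming particle that lands on a currently occupied $A$-site (waking it up) is much more likely than an isolated particle at an $A$-site receiving its $\lambda$-clock ring before any jump; carrying this out requires controlling what the active particles do between two sleep/wake events, and this is presumably where the dimension-dependent constant $\kappa_d$ and the factor $2d$ enter. A secondary subtlety is to make the reduction from torus to $\mathbb{Z}^d$ uniform in the (translation-ergodic) initial distribution, which should however be routine given the existing finite-volume comparison methods.
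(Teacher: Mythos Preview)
Your overall architecture matches the paper's: fix the sleeping set $A$, forbid sleep outside $A$, reduce to a one-dimensional process, and use Stirling on $\binom{n^d}{N}$ to produce the $\mu^\mu(1-\mu)^{1-\mu}$ factor. The finite-volume-to-$\mathbb{Z}^d$ transfer is indeed handled separately (as Theorem~\ref{thmLink}), though it is less routine than you imply and requires a specific toppling procedure with nested boxes and an IDLA outer-fluctuation estimate.

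The genuine gap is in your one-dimensional reduction. Tracking $X_t=\#\{\text{sleeping sites of }A\}$ does not give a Markov chain: the transition probabilities depend on \emph{which} sites are sleeping and where the active particles sit, not just on the count. In particular your claim that ``the downward probability is bounded below by an $n$-independent constant'' is false without further structure: if the particle you topple is at graph distance $\ell$ from the nearest sleeping site of $A$, the chance it visits that site before returning to its start is only of order $1/(2d\ell)$, and $\ell$ can be as large as $n$. The paper's key idea, missing from your proposal, is to \emph{order} $A=\{x_1,\dots,x_k\}$ greedily so that $\ell_j:=d(x_j,x_{j+1})$ is the distance from $x_j$ to its nearest not-yet-listed point, and then run a specific toppling strategy that only ever topples $x_{J(t)}$ for a cursor $J(t)$ which moves $J\to J+1$ on a sleep and $J\to J-1$ if the excursion visits $x_{J-1}$. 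This makes $J(t)$ an honest nearest-neighbour birth--death chain with site-dependent rates $p(j,j-1)\geqslant \tfrac{1}{(1+\lambda)2d\,\ell_{j-1}}$, and the reversible measure gives $\nu(k+1)\leqslant (2d\lambda)^{k-1}\prod_j\ell_j$. The constant $\kappa_d$ then enters not as a per-step multiplicative factor but through the packing bound $\prod_j\ell_j\leqslant \kappa_d^{\,n^d}$ (Lemma~\ref{lemmaSommeLog}): many large $\ell_j$'s would force many disjoint balls into the torus. This is also why the final condition reads $\kappa_d(2d\lambda)^\mu$ rather than $(\kappa_d\cdot 2d\lambda)^\mu$, which is what your hitting-probability estimate $(\kappa_d\,2d\lambda)^N$ would yield.
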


Our result that~$\mu_c(\lambda)\to 0$ when~$\lambda\to 0$ was already known for the transient case~$d\geqslant 3$ and the special case~$d=1$, but our proof works in any dimension~\smash{$d\geqslant 1$}.

Note that we only prove that~$\mu_c(\lambda)<1$ for~$\lambda<1/(2d\kappa_d)$, leaving open the conjecture that~$\mu_c(\lambda)<1$ for any finite value~$\lambda$ of the sleep rate.
It might be possible to improve our technique to obtain this, but our efforts in this direction have not succeeded so far (see paragraph~\ref{sectionEchelleOurs} for more comments on this).

\subsection{Slow phase for the finite volume dynamics}

In order to prove Theorem~\ref{mainThm}, we first study the model on the~$d$-dimensional torus~$\torus=(\Z/n\Z)^d$.
Recall that a configuration is said to be stable if all the particles are sleeping.
Stable configurations are absorbing states of the model, meaning that if the dynamic reaches such a configuration, then it cannot change anymore.
Hence, on a finite graph, if the initial number of particles is not greater than the number of sites, the system eventually fixates almost surely.
Therefore, instead of the probability for the system to fixate, we are interested in the time it takes for the system to fixate, i.e., to reach a stable configuration.

Denoting this random time~$\calT_n$, and writing~$\PARW$ for the probability measure relative the Activated Random Walk model on the torus with sleep rate~$\lambda$ starting from an initial configuration with independent Poisson random numbers of active particles with mean~$\mu$ on each site, we prove the following result:

\begin{theorem}
\label{thmSlowPhase}
In any dimension~$d\geqslant 1$, for every~$\mu\in(0,1)$ and~$\lambda>0$ which satisfy the condition~(\ref{conditionMuLambda}), the fixation time~$\calT_n$ of the Activated Random Walk model on the torus~$\torus=(\Z/n\Z)^d$ satisfies
\begin{equation}
\label{tempsExp}
\exists\,c>0\,,\quad
\forall\,n\geqslant 1\,,\qquad
\PARW\Big(\,\calT_n\,<\,e^{cn^d}\,\Big)
\ <\ e^{-cn^d}\,.
\end{equation}
\end{theorem}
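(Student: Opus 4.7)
The approach follows the outline announced in the abstract: fix the eventual resting set and compare the stabilization dynamics to a one-dimensional biased random walk. To see that such a strategy is compatible with the hypothesis, rewrite (\ref{conditionMuLambda}) by raising to the power $n^d$; using Stirling, one gets
\[
\binom{n^d}{\lceil\mu n^d\rceil}\,(2d\lambda)^{\lceil\mu n^d\rceil}
\;\lesssim\;\Big(\frac{\kappa_d(2d\lambda)^\mu}{\mu^\mu(1-\mu)^{1-\mu}}\Big)^{n^d},
\]
so the right-hand side is exponentially small in $n^d$ under the hypothesis. This is the combinatorial budget: a union bound over the set $S$ of eventual sleeping sites, paired with a sleep cost of order $\lambda$ per sleeping particle.

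\emph{Step 1 (Union bound over $S$).} On $\{\calT_n < \infty\}$, let $S_\infty\subset\torus$ denote the (random) set of sites carrying a sleeping particle at time $\calT_n$. Since initial counts are Poisson$(\mu)$, the total number of particles, equal to $|S_\infty|$, concentrates around $\mu n^d$ with probability at least $1 - e^{-c_0 n^d}$. Up to this negligible event one may assume $|S_\infty| = k := \lceil\mu n^d\rceil$, so a union bound reduces the proof to showing
\[
\max_{|S|=k}\PARW\bigl(S_\infty = S,\,\calT_n < T\bigr)
\;\leq\; C^{n^d}\,(2d\lambda)^{k},
\]
for some $C=C(d)$ and $T = e^{cn^d}$, which combined with the display above yields the theorem.

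\emph{Step 2 (Abelian reduction to a 1D biased walk).} Fix $S$ of size $k$. Using the site-wise instruction-stack representation together with the Abelian property of ARW, I would carry out stabilization by processing particles one at a time: a chosen active particle performs a simple random walk interspersed with sleep attempts until either (a) it falls asleep at a currently unoccupied site of $S$, raising the number of sleeping particles by one, or (b) it collides with a sleeping particle on $S$ and wakes it up, lowering the number by one. Writing $X_m$ for the number of sleeping particles after $m$ such attempts, the Abelian property guarantees that each increment is $\pm 1$, and the probability of a $+1$ step is bounded by a quantity of order $2d\lambda$: the factor $2d$ comes from the competition between the $2d$ jump directions and the sleep instruction in the instruction alphabet, while the constraint that the particle must reach an \emph{unoccupied} site of $S$ before colliding with an occupied one becomes dominant once a positive fraction of $S$ is filled. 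Standard gambler's-ruin / Cram\'er-type estimates for such a biased walk give that $\sup_m X_m \geq k$ with probability at most $(2d\lambda)^{k}$, up to a polynomial factor in the number of attempts that is absorbed into $C^{n^d}$ once $c$ is chosen small enough in $T = e^{cn^d}$.

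\emph{Main obstacle.} The delicate point is Step 2. The Abelian property must be used strongly enough to let us declare every ``attempt'' a genuine step of the one-dimensional walk, independently of the global state; in particular, once $S$ is almost filled, a freshly released walker must be shown to hit an occupied site of $S$ before reaching any designated empty site, with probability $1 - O(2d\lambda)$. This is a quantitative recurrence/escape estimate for simple random walk on the torus, dimension-dependent (this is where $\kappa_d$ enters) and is what keeps us from extending the argument to arbitrarily large $\lambda$.
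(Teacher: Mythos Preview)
Your high-level strategy (union bound over the eventual sleeping set~$S$, reduction to a one-dimensional walk, combinatorial budget versus a factor~$\lambda^k$) matches the paper. But Step~2 has a genuine gap, and your description of the one-dimensional walk does not work as stated.

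First, the walk on ``number of sleeping particles'' is not a Markov chain: the transition probabilities depend on \emph{which} sites of~$S$ are occupied and \emph{where} the active particle currently sits, not just on the count~$j$. More seriously, your claim that the $+1$ probability is~$O(2d\lambda)$ is false when~$j$ is small. If only a handful of sites of~$S$ are occupied, an active particle released anywhere on the torus will almost surely settle at one of the~$k-j$ empty sites of~$S$ long before it finds one of the~$j$ occupied ones (think~$d\geqslant 3$, or simply~$j=1$). So the early ratios~$p_j/q_j$ are enormous, and a gambler's-ruin bound of the form~$\prod p_j/q_j\leqslant(2d\lambda)^k$ cannot hold.

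The paper resolves this by a different reduction. It first forbids sleep outside~$S$ (a monotonicity argument), then \emph{orders} the sites~$S=\{x_1,\ldots,x_k\}$ greedily so that each~$x_{j+1}$ is the closest remaining point to~$x_j$, and runs a toppling strategy in which the particle at~$x_j$ either sleeps or performs a loop that may wake \emph{only}~$x_{j-1}$. This makes the one-dimensional walk a genuine birth--death chain on the index~$j$, with transition ratio exactly~$\lambda/P_{x_j}(T_{x_{j-1}}<T_{x_j}^+)\leqslant 2d\lambda\,\ell_{j-1}$, where~$\ell_{j-1}=d(x_{j-1},x_j)$. Individual ratios are allowed to be large; what is controlled is the \emph{product}, via a purely geometric lemma showing~$\prod_j\ell_j\leqslant\kappa_d^{n^d}$ for any subset of the torus ordered this way. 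That is where~$\kappa_d$ actually enters --- it is a packing estimate, not the ``recurrence/escape estimate'' you describe. Without the ordering and this geometric control of the product, the argument does not close.
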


Note that the result remains true if~$\calT_n$ is replaced with the number of times a particle jumps or falls asleep (later on we will call this the number of topplings) until the system fixates (see Lemma~\ref{lemmaSlowPhaseTopplings}).

To the best of our knowledge, this slow phase for finite systems was only studied up to now in the one-dimensional setting.
In~\cite{BGHR19}, it is shown that~(\ref{tempsExp}) holds on~$\Z/n\Z$ as soon as~$\mu>\mu_c(\lambda)$.

\subsection{Slow phase on the torus implies active phase in infinite volume}

Once Theorem~\ref{thmSlowPhase} is proved, Theorem~\ref{mainThm} is obtained through the following result, which relates the existence of a slow phase for the dynamics on the torus to the existence of an active phase for the model on the infinite lattice~$\Z^d$.

\begin{theorem}
\label{thmLink}
In any dimension~$d\geqslant 1$, for every sleep rate~$\lambda\in(0,\,\infty)$ and every~$\mu<\mu_c(\lambda)$, the property~(\ref{tempsExp}) about the fixation time~$\calT_n$ of the model on the torus~$\torus$ does not hold.
\end{theorem}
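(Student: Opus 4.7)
My plan is to fix $\mu \in (0, \mu_c(\lambda))$ and $\lambda \in (0, \infty)$, and show that the exponential lower bound (\ref{tempsExp}) cannot hold for any $c > 0$. The approach uses the Abelian property of ARW in the site-wise Diaconis--Fulton representation, together with a periodic-lifting argument to transfer the infinite-volume fixation result to the torus.

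First, I would pass from the fixation time $\calT_n$ to the total odometer $\sum_{x\in\torus}m_n(x)$ using the equivalence announced in Lemma~\ref{lemmaSlowPhaseTopplings}. The goal is then to bound this total odometer by a quantity growing slower than $e^{cn^d}$ (any polynomial or subexponential bound suffices) with probability bounded away from zero, which is enough to falsify~(\ref{tempsExp}).

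The key observation is that if one couples the torus initial configuration $\eta$ and instruction stacks on $\torus$ with a $\Z^d$-configuration by periodic lifting, extending $\eta$ and the stacks $n\Z^d$-periodically to a configuration $\tilde\eta$ and periodic stacks on $\Z^d$, then the $\Z^d$ ARW dynamics preserves this periodicity, and by the Abelian property the resulting $\Z^d$ odometer is itself $n\Z^d$-periodic and coincides on a fundamental domain with $m_n$. Hence it suffices to show that the periodic lift $\tilde\eta$ fixates on $\Z^d$ with a controlled single-site odometer, whence the total torus odometer is at most $n^d$ times this local quantity.

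The main obstacle will be to show that $\Z^d$ fixates starting from the periodic lift $\tilde\eta$ with periodic stacks, since Theorem~\ref{thmPhaseTransition} is stated for translation-ergodic initial configurations with iid stacks. I would fix $\mu' \in (\mu, \mu_c(\lambda))$ and argue that on a high-probability event (typically the event that the empirical mean of $\eta$ on $\{0,\ldots,n-1\}^d$ is at most $\mu'$, which holds with probability $1-e^{-\Omega(n^d)}$ by Poisson large-deviation estimates) the periodic lift is comparable to a low-density configuration that fixates on $\Z^d$. Making this comparison rigorous despite the long-range correlations of the periodic structure is the technical crux; possible workarounds include a coupling via thinning, an averaging argument over periodic shifts of the fundamental domain, or a direct quantitative version of subcritical fixation on $\Z^d$ at density $\mu'$ providing sufficiently strong tail estimates for the infinite-volume odometer, which would then let one conclude by a union bound over the $n^d$ sites of the torus.
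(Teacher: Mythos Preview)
Your periodic-lifting step is a tautology, not a reduction. Lifting the torus configuration and stacks $n\Z^d$-periodically to~$\Z^d$ gives a periodic odometer that, by construction, equals the torus odometer on a fundamental domain. So the statement ``it suffices to show that the periodic lift fixates with a controlled single-site odometer'' is exactly the statement you are trying to prove: you have not gained any independent handle on it. The real difficulty is that the lifted law on~$\Z^d$ has neither an ergodic initial configuration nor i.i.d.\ instruction stacks (sites in the same coset of~$n\Z^d$ share identical stacks), so Theorem~\ref{thmPhaseTransition} and any known quantitative subcritical odometer bounds simply do not apply. Your proposed workarounds do not close this gap: thinning or dominating by an i.i.d.\ configuration of density~$\mu'\in(\mu,\mu_c)$ only addresses the initial data, not the stacks, and the monotonicity (Lemma~\ref{lemmaMonotonicity}) compares configurations under a \emph{fixed} field of instructions, so you cannot swap periodic stacks for i.i.d.\ ones; averaging over shifts of the fundamental domain leaves the joint law unchanged; and a ``direct quantitative version of subcritical fixation'' would have to be proved for the non-i.i.d.\ periodic model, which is the problem itself.

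The paper's argument avoids periodic lifting entirely. It works inside the torus and exploits the only available subcritical input, Lemma~\ref{lemmaMn}, which is a \emph{finite-box} statement (few particles exit~$\Lambda_m$ when stabilizing with killing at the boundary) and therefore transfers verbatim to a sub-box~$\boxII\subset\torus$ as long as particles do not wrap around. The proof then pays an exponentially small but sufficiently large probability (at least~$e^{-cn^d/24}$ per step, after choosing the annulus width~$a$ small) to: start with an empty annulus~$\boxO\setminus\boxII$; stabilize~$\boxII$ and collect the~$O(\varepsilon n^d)$ escapees; push them to~$\partial^e\boxI$ via acceptable topplings without touching~$\boxIII$; let them settle via IDLA inside~$\boxO\setminus\boxII$ (Lemma~\ref{lemmaIDLA}); and finally force everyone to sleep. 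When this succeeds no site of~$\partial^i\boxO$ is ever toppled, so the torus odometer coincides with the odometer of stabilization in the open box~$\boxO\setminus\partial^i\boxO$, which is bounded by~$e^{cn^d/2}$ via the crude random-walk estimate of Lemma~\ref{lemmaStabTime}. The point you are missing is this localization step: one must engineer an event on which the torus dynamics never feels the periodic boundary, and that is done not by lifting to~$\Z^d$ but by building a buffer annulus and controlling the escapees there.
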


This Theorem is a very weak version of a conjecture mentioned in section~6 of~\cite{Rolla20}, which predicts that for every density~\smash{$\mu<\mu_c(\lambda)$}, there should exist~$\kappa>0$ and~$\delta>0$ such that, for~$n$ large enough,
$$\PARW\Big(\,\calT_n\,\leqslant\,\kappa n^d\,\Big)
\ \geqslant\ 1-n^{-\delta}\,.$$
Combined with Theorem~\ref{thmSlowPhase}, the above Theorem entails that~$\mu\geqslant\mu_c(\lambda)$ for every~$\mu\in(0,\,1)$ and~$\lambda>0$ satisfying the condition~(\ref{conditionMuLambda}) and, this condition being a strict inequality, it implies that~$\mu>\mu_c(\lambda)$ for every~$\mu\in(0,\,1)$ and~$\lambda>0$ satisfying this condition which, given Theorem~\ref{thmPhaseTransition}, yields Theorem~\ref{mainThm}.

\subsection{Key ingredients and organization of the article}

In section~\ref{sectionDiaconis}, we recall the site-wise representation of the Activated Random Walk model and the the properties which follow from it, including the monotonicity property and the Abelian property, which are key ingredients in our method.
In section~\ref{sectionProofSlowPhase}, we prove Theorem~\ref{thmSlowPhase} about the fixation time on the torus, following the general strategy presented below.
We conclude in section~\ref{sectionProofThmLink} with the proof of Theorem~\ref{thmLink} which, together with Theorems~\ref{thmSlowPhase} and~\ref{thmPhaseTransition}, implies our main result, the existence of an active phase on~$\Z^d$ for all~$d\geqslant 1$ (Theorem~\ref{mainThm}).

Let us sketch below the main elements of the proofs of our two main Theorems.

\subsubsection{Sketch of the proof of Theorem~\ref{thmSlowPhase}}
\label{sectionSketchThmSlowPhase}

We first sketch the proof of Theorem~\ref{thmSlowPhase}, that is to say, for~$\lambda$ and~$\mu$ satisfying the condition~(\ref{conditionMuLambda}), we want to show that the stabilization time on the torus is exponentially large with overwhelming probability.
All the intuitive facts claimed below are formally proved in section~\ref{sectionProofSlowPhase}.

\paragraph{Fixing the initial number of particles:}
Taking~$\mu'$ slightly lower than~$\mu$, but such that the condition~(\ref{conditionMuLambda}) still holds for~$\lambda$ and~$\mu'$, we may reason conditionally on the fact that there are at least~$\mu' n^d$ particles in the initial configuration (which happens with overwhelming probability).
Then, using the monotonicity result of Lemma~\ref{lemmaMonotonicity}, it is enough to show that the stabilization time is exponential starting from any deterministic initial configuration with exactly~$k=\Ceil{\mu' n^d}$ active particles.

\paragraph{Fixing the set of sleeping sites:}
This is the first key ingredient of our proof.
Once the number~$k$ of particles is fixed, the probability that these~$k$ particles fixate within a given time can be written as a sum over all the subsets~$A\subset\torus$ with cardinality~$k$ of the probability that they fixate on~$A$ within a given time.
Thus, we fix an arbitrary set~$A$ with cardinality~$k$, and we try to evaluate the probability that the particles all fall asleep on~$A$ within a given time, that is to say, that we eventually reach the final stable configuration with one sleeping particle on each site of~$A$.
We will then have a combinatorial factor~\smash{$\binom{n^d}{k}$} relative to the choice of~$A$, but we will be able to compensate it by choosing~$\lambda$ small enough.

\paragraph{Forbidding particles to sleep elsewhere:}
Then, a second key idea is to modify the model by forbidding particles to fall asleep outside of~$A$.
Indeed, if the particles eventually fixate on~$A$, it means that the sleep instructions which happened outside of~$A$ have not been useful, because all the particles which fell asleep outside of~$A$ have eventually been waken up by another particle.
Therefore, this modification of the model only increases the probability that the particles fixate on~$A$ within a given time (see section~\ref{sectionSleepless} for a formal proof of this intuitive fact).

\paragraph{Taking advantage of the Abelian property of the site-wise representation:}
Instead of reasoning with the continuous time with Poisson clocks, we bound the number of topplings, i.e., the number of times a particle jumps or tries to fall asleep until the system stabilizes.
If we show that this number of topplings is exponentially large, then it easily follows that the fixation time of the continuous-time model is also exponentially large.

To evaluate this number of topplings, we use the site-wise representation of the model (see section~\ref{sectionSitewise}) and we take advantage of its Abelian property (Lemma~\ref{lemmaAbelian}), which says that the number of topplings and the final configuration do not depend on the order with which the sites are toppled.
Thus, our goal is to find a toppling strategy which enables us to show that, with overwhelming probability, we can perform an exponentially large number of topplings before the system stabilizes on~$A$ (remember that, with the above modification of the model, the only stable configuration which can be reached is the configuration with one sleeping particle on each site of~$A$).

\paragraph{Having one particle on each site of the settling set:}
In order to stabilize all the particles on the sites of~$A$, as a first step we may let all the particles move until they arrive on an unoccupied site of~$A$.
Hence, the number of topplings necessary to fixate on~$A$ starting from any initial configuration with exactly~$k=|A|$ particles stochastically dominates the number of topplings necessary to fixate on~$A$ starting with one active particle on each site of~$A$ (see section~\ref{sectionWalkToA} for a formal proof of this).
Thus, in what follows we may as well assume that we start with the initial configuration with one active particle on each site of~$A$.

\paragraph{Toppling the sites in the right order:}
Then, our third key idea is to order the sites of~$A$ by writing~$A=\{x_1,\,\ldots,\,x_k\}$ in a way such that the distances~$d(x_j,\,x_{j+1})$ are not too big (see section~\ref{sectionOrder}).
Given this order, our toppling strategy is composed of a certain number of steps ending with exactly one particle (either active or sleeping) on each site of~$A$.

At each step, we start by toppling a site~$x_j\in A$ which contains an active particle and such that for all~$i>j$, there is also an active particle on~$x_i$, as drawn on figure~\ref{figTopplingProcedure} (at the first step, we topple~$x_1$).
With probability~$\lambda/(1+\lambda)$, the particle falls asleep on~$x_j$ before moving.
In this case, we proceed to the next step, where we topple the site~$x_{j+1}$.
Otherwise we let the particle walk with successive topplings until it comes back to its starting point~$x_j$ (remember that it is not allowed to fall asleep anywhere else, all the other sites of~$A$ being occupied).
While walking from~$x_j$ back to~$x_j$, this active particle may wake up some of the particles which were sleeping on previous sites of~$A$.
In particular, if it visits~$x_{j-1}$ before returning to~$x_j$, we may choose, in the next step (i.e., once the particle has returned on~$x_j$), to topple the site~$x_{j-1}$, where the particle is necessarily awake.
Otherwise, we topple again~$x_j$ in the next step.
This procedure ends when, toppling the last particle~$x_k$, it falls asleep before moving.
At this point, some of the previous particles may still be active, but we obtain a lower bound for the number of topplings necessary to stabilize the configuration (see section~\ref{sectionDynamic} for the precise description of this toppling strategy).

\begin{figure}
\begin{center}
\begin{tikzpicture}[scale=1.1]
\draw (6.2,4.3) node{During each step, the particle at site~$x_j$ can:};
\draw[gray,dotted,thick] (0.4,0.6) \sle -- (0.9,0.7) \sle -- (1.2,0.8) \act -- (2,0.6) \sle -- (2.1,1) \act -- (2.9,1.1) \act -- (3.4,1.7) \sle -- (4,1.6) (0,1.6) -- (0.3,1.55) \sle -- (1,1.7) \act -- (1.8,2) \sle -- (2.3,1.9) \sle -- (2.5,2.5) node[red!50!black,scale=0.8]{\textbf{1$\to${\color{blue!50!black}s}}} -- (1.6,2.6) \act -- (1,2.7) \act -- (0.4,2.5) \act -- (0.4,3.1) \act -- (0.95,3.5) \act -- (2.2,3.3) \act -- (2.6,3.8) \act -- (2.8,4) (2.8,0) -- (3.1,0.3) \act -- (3.5,0.4) \act -- (3.6,0) (3.6,4) -- (3.8,3.2) \act -- (3.6,2.9) \act;
\draw (0,0) -- (0,4) -- (4,4) -- (4,0) -- cycle;
\draw (0.3,0.2) node[scale=0.7]{$x_1$};
\draw[->] (0.32,0.3) -- (0.38,0.5);
\draw (2.7,2.9) node{$x_j$};
\draw[->] (2.67,2.8) -- (2.53,2.6);
\draw (2.2,1.5) node{$x_{j-1}$};
\draw[->] (2.22,1.6) -- (2.28,1.8);
\draw (3.7,2.5) node{$x_k$};
\draw[->] (3.68,2.6) -- (3.62,2.8);
\draw (2,-0.3) node{fall asleep,};
\draw (2,-0.6) node{with probability};
\draw (2,-1) node{$\frac{\lambda}{1+\lambda}\,,$};

\begin{scope}[xshift=4.2cm]
\draw[thick,red!70!orange,->] (2.6,2.4) to[out=-30,in=60] (3.2,2) to[out=-120,in=-70] (2.33,1.8) (2.27,2) to[out=110,in=-160] (2.4,2.43);
\draw[gray,dotted,thick] (0.4,0.6) \sle -- (0.9,0.7) \sle -- (1.2,0.8) \act -- (2,0.6) \sle -- (2.1,1) \act -- (2.9,1.1) \act -- (3.4,1.7) \sle -- (4,1.6) (0,1.6) -- (0.3,1.55) \sle -- (1,1.7) \act -- (1.8,2) \sle -- (2.3,1.9) node[red!50!black,scale=0.8]{\textbf{{\color{blue!50!black}s$\to$}1}} -- (2.5,2.5) node[red!50!black,scale=0.8]{\textbf{1}} -- (1.6,2.6) \act -- (1,2.7) \act -- (0.4,2.5) \act -- (0.4,3.1) \act -- (0.95,3.5) \act -- (2.2,3.3) \act -- (2.6,3.8) \act -- (2.8,4) (2.8,0) -- (3.1,0.3) \act -- (3.5,0.4) \act -- (3.6,0) (3.6,4) -- (3.8,3.2) \act -- (3.6,2.9) \act;
\draw (0,0) -- (0,4) -- (4,4) -- (4,0) -- cycle;
\draw (0.3,0.2) node{$x_1$};
\draw[->] (0.32,0.3) -- (0.38,0.5);
\draw (2.7,2.9) node{$x_j$};
\draw[->] (2.67,2.8) -- (2.53,2.6);
\draw (2.2,1.5) node{$x_{j-1}$};
\draw[->] (2.22,1.6) -- (2.28,1.8);
\draw (3.7,2.5) node{$x_k$};
\draw[->] (3.68,2.6) -- (3.62,2.8);
\draw (2,-0.3) node{wake up the particule on~$x_{j-1}$,};
\draw (2,-0.6) node{with probability};
\draw (2,-1) node{$\frac{1}{1+\lambda}P_{x_{j}}\big(T_{x_{j-1}}<T_{x_j}^+\big)\,,$};
\end{scope}

\begin{scope}[xshift=8.4cm]
\draw[gray,dotted,thick] (0.4,0.6) \sle -- (0.9,0.7) \sle -- (1.2,0.8) \act -- (2,0.6) \sle -- (2.1,1) \act -- (2.9,1.1) \act -- (3.4,1.7) \sle -- (4,1.6) (0,1.6) -- (0.3,1.55) \sle -- (1,1.7) \act -- (1.8,2) \sle -- (2.3,1.9) \sle -- (2.5,2.5) node[red!50!black,scale=0.8]{\textbf{1}} -- (1.6,2.6) \act -- (1,2.7) \act -- (0.4,2.5) \act -- (0.4,3.1) \act -- (0.95,3.5) \act -- (2.2,3.3) \act -- (2.6,3.8) \act -- (2.8,4) (2.8,0) -- (3.1,0.3) \act -- (3.5,0.4) \act -- (3.6,0) (3.6,4) -- (3.8,3.2) \act -- (3.6,2.9) \act;
\draw[thick,red!70!orange,->] (2.6,2.4) to[out=-30,in=-110] (3.3,2.1) to[out=70,in=20] (2.6,2.53);
\draw (0,0) -- (0,4) -- (4,4) -- (4,0) -- cycle;
\draw (0.3,0.2) node{$x_1$};
\draw[->] (0.32,0.3) -- (0.38,0.5);
\draw (2.7,2.9) node{$x_j$};
\draw[->] (2.67,2.8) -- (2.53,2.6);
\draw (2.2,1.5) node{$x_{j-1}$};
\draw[->] (2.22,1.6) -- (2.28,1.8);
\draw (3.7,2.5) node{$x_k$};
\draw[->] (3.68,2.6) -- (3.62,2.8);
\draw (2,-0.3) node{none of these,};
\draw (2,-0.6) node{with probability};
\draw (2,-1) node{$\frac{1}{1+\lambda}P_{x_{j}}\big(T_{x_j}^+<T_{x_{j-1}}\big)\,.$};
\end{scope}
\end{tikzpicture}
\end{center}
\caption{\label{figTopplingProcedure}
At each step of the toppling procedure used in the proof of Theorem~\ref{thmSlowPhase}, the particle at~$x_j$ either falls asleep (left), or visits the site~$x_{j-1}$, waking the particle there if it was sleeping (middle), or goes back to~$x_j$ without visiting~$x_{j-1}$ (right). In this last case, the particle may wake up other particles at sites~$x_i$ with~$i<j-1$, but we do not care about this.
Sleeping particles are denoted by~'s', while active particles are denoted by~'1'.
}
\end{figure}
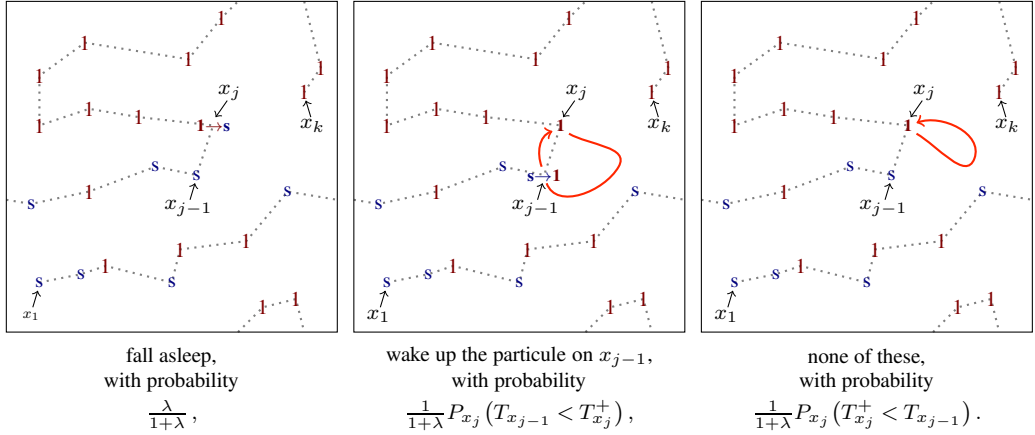

\paragraph{Reduction to a one-dimensional random walk with a negative drift:}
The toppling strategy described above is equivalent to a simplification of the model where, for each~$j$, the particle at~$x_j$ is only allowed to wake up the particle at~$x_{j-1}$, and we ignore collisions with other sleeping particles.
Intuitively, this modification can only increase the probability that the particles fixate on~$A$ within a given time.
In this modified model, at each step, either we add a sleeping particle, or we wake up the preceding particle, or nothing changes.
Thus, the dynamics is reduced to a random walk on the set of integers~$\{1,\,\ldots,\,k+1\}$ because if~$j$ particles are sleeping, it can only be on the first~$j$ sites of~$A$.

At this point, it only remains to check that this random walk (represented on figure~\ref{figRW}) has a negative drift.
To compute this drift, we combine two geometric estimates.
The first estimate, proved in section~\ref{sectionRWtorus}, shows that, for any two distinct points~$x,\,y\in\torus$, the probability that a symmetric random walk started at~$x$ reaches~$y$ before returning to~$x$ satisfies
$$P_{x}\big(T_{y}<T_{x}^+\big)
\ \geqslant\ \frac{1}{2d\,d(x,\,y)}\,,$$
where~$d(x,\,y)$ is the graph distance between~$x$ and~$y$.
The second estimate, obtained in section~\ref{sectionSommeLog}, translates the constraint of the finiteness of the available space in the torus, saying that, if the order~$A=\{x_1,\,\ldots,\,x_k\}$ is well chosen, then
$$\sum_{j=1}^{k-1} \ln\,d(x_j,\,x_{j+1})
\ =\ O\big(n^d\big)\,.$$
Combining these two estimates, we show in section~\ref{sectionMC} that the random walk has a negative drift, which makes the reaching time of~$k+1$ exponentially small with high probability.

We then conclude in three last steps.
In section~\ref{sectionDeterministicEta}, we prove that the number of topplings is exponentially large but starting from a deterministic initial configuration, checking that the combinatorial factor coming from the choice of the set~$A$ can be outweighed by the drift if~$\lambda$ is small enough.
In section~\ref{sectionSlowPhaseTopplings}, we extend this result to an i.i.d.\ Poisson initial distribution, obtaining a version of Theorem~\ref{thmSlowPhase} where the stabilization time~$\calT_n$ is replaced with the number of topplings, before eventually proving this very Theorem in section~\ref{sectionProofSlowPhaseFinal}.

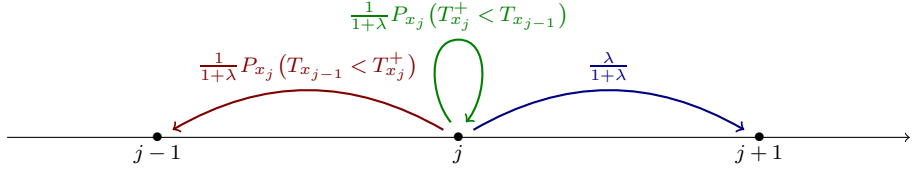
\begin{figure}
\begin{center}
\begin{tikzpicture}
\draw[->] (0,0) -- (12,0);
\draw (2,0) node{$\bullet$} node[below]{$j-1$} (6,0) node{$\bullet$} node[below]{$j$} (10,0) node{$\bullet$} node[below]{$j+1$};
\draw[thick,->,blue!50!black] (6.2,0.1) to[bend left] node[midway,above]{$\frac{\lambda}{1+\lambda}$} (9.8,0.1);
\draw[thick,->,red!50!black] (5.8,0.1) to[bend right] node[midway,above]{$\frac{1}{1+\lambda} P_{x_{j}}\big(T_{x_{j-1}}<T_{x_j}^+\big)$} (2.2,0.1);
\draw[thick,->,green!50!black] (5.9,0.2) to[out=130,in=180] (6,1.3) node[above]{$\frac{1}{1+\lambda} P_{x_{j}}\big(T_{x_j}^+<T_{x_{j-1}}\big)$} to[out=0,in=50]  (6.1,0.2);
\end{tikzpicture}
\end{center}
\caption{\label{figRW}
The one-dimensional random walk which governs the toppling procedure in the proof of Theorem~\ref{thmSlowPhase}.
}
\end{figure}

\paragraph{Summary:}
To put it in a nutshell, by fixing a sleeping set~$A\subset\torus$, we are able to reduce the dynamics to the case of density one (there is exactly one particle on each site of~$A$, either active or sleeping), but on a different graph which is the trace graph on~$A$.
We then take advantage of the abelianity and monotonicity properties of the model to choose the order with which the sites are toppled, which allows us to control the fixation time of the model with a one-dimensional random walk.
The geometric constraints on the set~$A$ then allow us to control the drift of this random walk, and therefore to show that it takes an exponential time to fixate the particles on~$A$.
Because this drift depends on~$\lambda$, while the combinatorial factor~\smash{$\binom{n^d}{k}$} does not, we are able to compensate it by choosing~$\lambda$ sufficiently small.

\subsubsection{Sketch of the proof of Theorem~\ref{thmLink}}
\label{sectionSketchThmLink}

We now present our strategy to prove Theorem~\ref{thmLink}, which says that if~$\mu<\mu_c(\lambda)$, then the stabilization time on the torus~$\torus$ cannot be exponentially large in the sense of property~(\ref{tempsExp}).
Thus, we fix~$c>0$ and we show that, for~$n$ large enough,
\begin{equation}
\label{aimThm4}
\PARW\Big(\,\calT_n\,<\,e^{cn^d}\,\Big)
\ \geqslant\ e^{-cn^d}\,.
\end{equation}

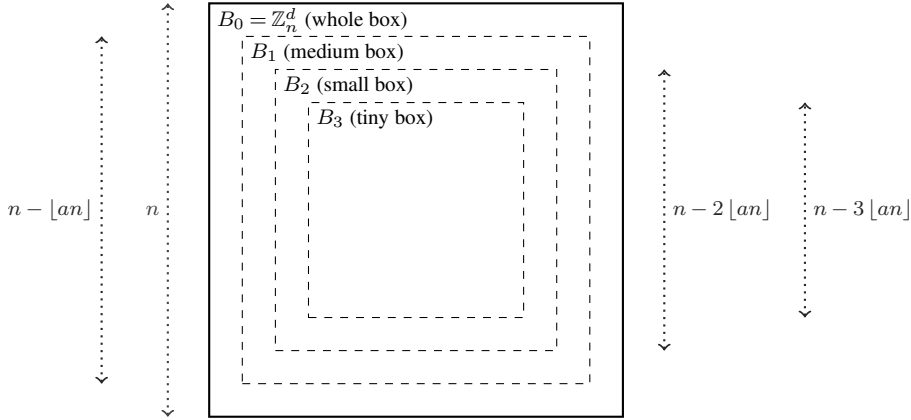
\begin{figure}
\begin{center}
\begin{tikzpicture}[scale=1.1]
\draw[thick] (-2.5,-2.5) -- (-2.5,2.5) -- (2.5,2.5) -- (2.5,-2.5) -- cycle;
\draw (-2.5,2.3) node[anchor=west]{$\boxO=\torus$ (whole box)};
\foreach \x in {2.1,1.7,1.3}
{
\draw[dashed] (-\x,-\x) -- (-\x,\x) -- (\x,\x) -- (\x,-\x) -- cycle;
}
\draw (-2.1,1.9) node[anchor=west]{$\boxI$ (medium box)};
\draw (-1.7,1.5) node[anchor=west]{$\boxII$ (small box)};
\draw (-1.3,1.1) node[anchor=west]{$\boxIII$ (tiny box)};
\draw[<->,dotted,thick,gray!50!black] (-3,-2.5) -- node[midway,left]{$n$} (-3,2.5);
\draw[<->,dotted,thick,gray!30!black] (-3.8,-2.1) -- node[midway,left]{$n-\Ent{an}$} (-3.8,2.1);
\draw[<->,dotted,thick,gray!30!black] (3,-1.7) -- node[midway,right]{$n-2\Ent{an}$} (3,1.7);
\draw[<->,dotted,thick,gray!30!black] (4.7,-1.3) -- node[midway,right]{$n-3\Ent{an}$} (4.7,1.3);
\end{tikzpicture}
\end{center}
\caption{\label{figBoxes}
The four sizes of boxes used for the proof of Theorem~\ref{thmLink}.
}
\end{figure}

Again, we use the site-wise representation of the model, and we study the number of topplings rather than the continuous time.
To stabilize the torus, we use the following toppling procedure (which is explained in more details in section~\ref{sectionProofThmLinkFinal}).
Denoting by~$p:\Z^d\to\torus$ the canonical projection application on the torus and writing, for every~$k\in\N$,
\begin{equation}
\label{defBoite}
\Lambda_k\ =\ \left(-\frac{k}{2},\,\frac{k}{2}\,\right]^d\cap\Z^d
\ =\ \left\{-\Ent{\frac{k-1}{2}},\,\ldots,\,\Ent{\frac{k}{2}}\right\}^d\,,
\end{equation}
we define four sizes of sub-boxes of the torus, as represented on figure~\ref{figBoxes}, by writing~\smash{$B_j=p(\Lambda_{n-j\Ent{an}})$} for every~\smash{$j\in\{0,\,1,\,2,\,3\}$}, with a certain parameter~$a>0$.
We call~$B_0=\torus$ the whole box,~$B_1$ the medium box,~$B_2$ the small box, and~$B_3$ the tiny box.

\begin{figure}
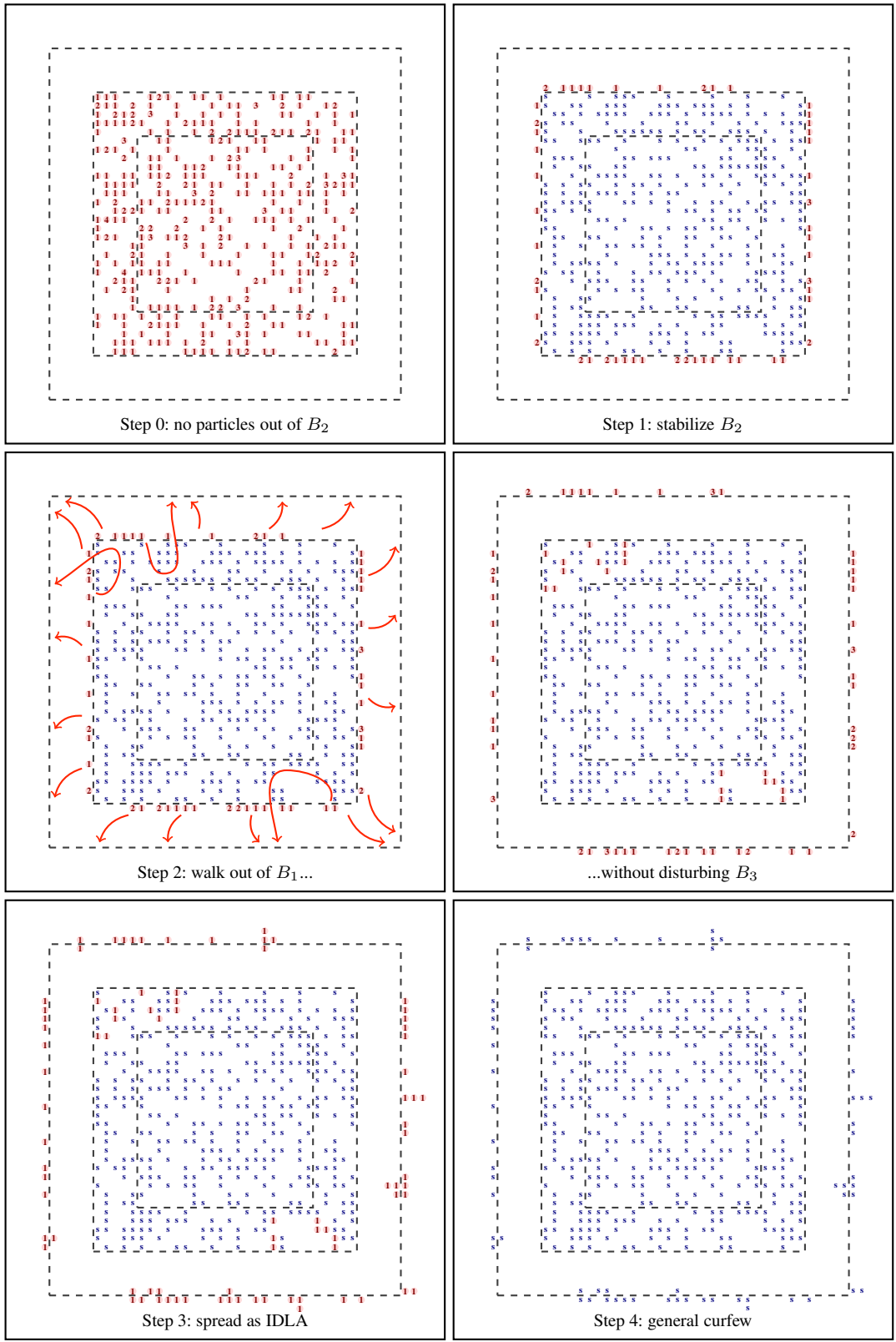

\begin{center}

\end{center}
\caption{\label{figTopplingThmLink}The toppling procedure used in the proof of Theorem~\ref{thmLink}.}
\end{figure}

If the parameter~$a$ is chosen small enough, then the volume of the remaining space between the small box and the boundary of the whole box is not too big, and thus we can assume with a \guillemets{reasonable} probabilistic cost that we start with no particles outside this small box.
What we call a \guillemets{reasonable} probabilistic cost is that of an event with possibly exponentially small probability, but much larger than~\smash{$e^{-cn^d}$} when~$c$ is fixed and~$a$ is chosen small enough (and~$n$ large enough).
For example the probability~$e^{-\mu|A|}$ of starting with an empty annulus~\smash{$A = B_0 \setminus B_2$} is exponentially small in~$n^d$, but for~$c > 0$ fixed it is larger than~\smash{$e^{-c n^d/24}$} for~$a$ small enough and~$n$ large enough.
Thus, to obtain the bound~(\ref{aimThm4}) we will write the event~\smash{$\{\calT_n<e^{cn^d}\}$} as an intersection of several events which have a \guillemets{reasonable} probabilistic cost.
Starting with no particles outside the small box is the first of these events, and we call it step~0 of the procedure.

Then, during step~1, we stabilize the small box, ignoring particles once they jump out of the box.
Because we assumed that~$\mu<\mu_c(\lambda)$, we have an upper bound on the number of particles which jump out of this box during this step, which is due to Rolla and Tournier (see Lemma~\ref{lemmaMn}): for every~$\varepsilon>0$, with positive probability, uniformly in~$n$, a proportion less than~$\varepsilon$ of the particles jump out of the small box.

After this, during step~2 we force these particles which jumped out of the small box to walk until they reach the boundary of the medium box, forbidding them to fall asleep before.
Intuitively, this can only increase the stabilization time, and this argument can be made rigorous using the notion of acceptable topplings (see section~\ref{sectionSitewise}).
By symmetry, the probability for a random walk starting from the border of the small box to reach the border of the medium box before reaching the tiny box is at least one half.
Hence, we can show that, with a \guillemets{reasonable} probabilistic cost, we can move all these particles to the boundary of the medium box, without any particle visiting the tiny box, where all the particles are sleeping.

Then, during step~3 we consider these particles which reached the boundary of the medium box during step~2 and we let each of them walk until it reaches an empty site.
Thus, these particles spread as an Internal Diffusion Limited Aggregation, which is a model corresponding to Activated Random Walks with an infinite sleep rate.
By using a much weaker estimate than Jerison, Levine and Sheffield's sharp bound to control its outer fluctuations (see section~\ref{sectionIDLA}), we can show that, with high probability, these particles can spread without hitting neither the small box nor the boundary of the whole box.

The fourth and last step consists in forcing all the active particles to fall asleep, which again, represents a \guillemets{reasonable} probabilistic cost.

If all the steps of this procedure succeed, we end up with a stable configuration, and the fact that no particle ever reaches the boundary of the whole box during this procedure allows us to upper bound the number of topplings performed (see section~\ref{sectionStabTime}), eventually yielding the property~(\ref{aimThm4}) about the stabilization time on the torus.

\subsection{Could our technique be extended for arbitrary sleep rates?}
\label{sectionEchelleOurs}

As explained above, we only prove that~$\mu_c(\lambda)<1$ for~$\lambda$ sufficiently small, and it is natural to wonder whether our technique could be improved to show that~$\mu_c(\lambda)<1$ for any finite value of the sleep rate~$\lambda$.

Looking into the various steps of our proof detailed in section~\ref{sectionSketchThmSlowPhase}, we can see that the most brutal approximation we make is when we only allow a particle to wake up the particle immediately preceding it.
Indeed, if~$\mu$ is very close to~$1$, then we can expect that most of the distances~$d(x_j,\,x_{j+1})$ will be of order~$1$, and thus when~$\lambda$ is large, the one-dimensional random walk will have a strong positive drift, making our technique fail.

If we allow every particle to wake up any other particle, and then topple the first awaken particle for the chosen order on the sites of~$A$, then the dynamics becomes much more complicated and our reduction to a simple one-dimensional process does not work anymore.

A softer approach could be to allow the particle on~$x_j$ to wake up either no particle, or the particle on~$x_{j-1}$, or the two particles on~$x_{j-1}$ and~$x_{j-2}$, etc.
Doing so, it would remain true that, after each step, the set of sleeping particles would remain of the form~$\{1,\,\ldots,\,j\}$, and thus we could control the procedure with a one-dimensional but long-range random walk.
However, our preliminary computations to control the drift of this random walk seem to indicate that this simple extension of the present strategy is not sufficient to prove~$\mu_c(\lambda)<1$ for all~$\lambda<\infty$.

\section{Diaconis-Fulton representation and properties}
\label{sectionDiaconis}

We now describe a site-wise construction known as the Diaconis-Fulton representation, which was introduced in~\cite{DF91} and adapted to the framework of Activated Random Walks in~\cite{RS12}.
This representation presents crucial properties of Abelianity and monotonicity, on which we rely heavily.
We essentially reproduce here notations from~\cite{Rolla20}.

\subsection{The site-wise construction}
\label{sectionSitewise}

Let~$G=(V,\,E)$ be a vertex-transitive and locally finite graph (in what follows, we will only be interested in~$G=\torus$ or~$G=\Z^d$).
A configuration of the Activated Random Walk model on~$G$ can be represented by a vector~$\eta:V\rightarrow\N_\s$, with~$\N_\s=\N\cup\{\s\}$, where~$\eta(x)=k\in\N$ means that there are~$k$ active particles on the site~$x$, while~$\eta(x)=\s$ means that there is one sleeping particle on the site~$x$.
We equip the set~$\N_\s$ with the total order~$0<\s<1<2<\ldots$ and, for~\smash{$\eta,\,\eta'\in\N_\s^V$}, we write~$\eta\leqslant\eta'$ whenever~$\eta(x)\leqslant\eta'(x)$ for every~$x\in V$.
If~\smash{$\eta\in\N_\s^V$} and~$A\subset V$, we introduce the following notation for the total number of particles on the sites of~$A$ in the configuration~$\eta$, regardless of their state, active or sleeping:
$$\abs{\eta}_A\ =\ \sum_{x\in A}\abs{\eta(x)}\,,$$
with the convention that~$\abs{\s}=1$.
If~$A=V$, we simply write~$\abs{\eta}=\abs{\eta}_V$.

We also define two operations on the configurations called toppling operations, which correspond to a particle moving or falling asleep.
A site~$x\in V$ is said to be stable in the configuration~$\eta$ if~$\eta(x)\in\{0,\,\s\}$, otherwise it is called unstable.
We say that a configuration~$\eta$ is stable in a subset~$U\subset V$ is~$\eta(x)\in\{0,\,\s\}$ for all~$x\in U$.
If~$\eta\in\N_\s^V$ and~$x\in V$ is unstable in~$\eta$, we define
\begin{equation}
\label{sleepToppling}
\tau_{x\s}\eta\ :\ z\in V\ \longmapsto\ \begin{cases}
\s &\text{if }z=x\text{ and }\eta(x)=1\,,\\
\eta(z) &\text{otherwise,}
\end{cases}
\end{equation}
and for every~$y\in V$ neighbour of~$x$ in the graph~$G$, we define
\begin{equation}
\label{jumpToppling}
\tau_{xy}\eta\ :\ z\in V\ \longmapsto\ \begin{cases}
\eta(x)-1 &\text{if }z=x\,,\\
\eta(y)+1 &\text{if }z=y\,,\\
\eta(z) &\text{otherwise,}
\end{cases}
\end{equation}
with the convention that~$\s+1=2$.
The key idea of the site-wise representation is that the toppling instructions are attached to the sites of the graph rather than to the particles.
Let us explain this.

We consider a field of instructions~$\tau=(\tau^{x,j})_{x\in V,\,j\in\N}$ where, for every~$x\in V$ and~$j\in\N$, the instruction~$\tau^{x,j}$ is either a sleep instruction~$\tau_{x\s}$ or a jump instruction~$\tau_{xy}$ to one of the neighbours~$y$ of~$x$ in the graph.
This field~$\tau$ will be taken random later on, but for the moment we consider a deterministic field of instructions.
We also consider a function~$h:V\rightarrow\N$ which we call odometer, and which will allow us to remember how many instructions have already been used at each site.
Given a field of instructions~$\tau=(\tau^{x,j})_{x\in V,\,j\in\N}$, a configuration~$\eta\in\N_\s^V$ and an odometer~$h\in\N^V$, if~$x\in V$ is unstable in~$\eta$ (i.e., if~$\eta(x)\geqslant 1$), we define the toppling operation at~$x$ as
\begin{equation}
\label{toppling}
\Phi^\tau_x(\eta,\,h)
\ =\ \big(\tau^{x,h(x)}\eta,\,h+\delta_x\big)\,,
\end{equation}
where~$\delta_x(y)=1$ if~$x=y$ and~$0$ otherwise.
We say that~$\Phi^\tau_x$ is legal for~$(\eta,\,h)$ if~$x$ is unstable in~$\eta$.

One can think of this representation as taking, above each site of the graph, an infinite pile of instructions which are either sleep instructions or jump instructions to a neighbouring site.
If, for every~$x\in V$,~$h(x)$ is the number of instructions already used at~$x$, then the toppling operation~$\Phi_x^\tau$ looks at the first instruction in the pile above~$x$ which has not already been used, and applies this instruction to the configuration~$\eta$.

In section~\ref{sectionProofThmLink} (where we prove Theorem~\ref{thmLink}), it will be convenient to be allowed to topple also sites where a particle is sleeping, which is not a legal toppling, but which we will call an acceptable toppling.
The above definitions~(\ref{jumpToppling}),~(\ref{sleepToppling}) and~(\ref{toppling}) extend to the case when~$\eta(x)=\s$ by letting~$\s-1=0$, and we say that~$\Phi^\tau_x$ is acceptable for~$(\eta,\,h)$ if~$\eta(x)\geqslant\s$.
One can think of an illegal but acceptable toppling as a forced awakening of the sleeping particle at~$x$, followed by a toppling of~$x$.

Given a sequence of sites~$\alpha=(x_1,\,\ldots,\,x_l)$, we say that~$\alpha$ is~$\tau$-legal (respectively,~$\tau$-acceptable) for~$(\eta,\,h)$ if, for every~$j\in\{1,\,\ldots,\,l\}$, the operation~$\Phi^\tau_{x_j}$ is legal (resp., acceptable) for~$\Phi^\tau_{x_{j-1}}\Phi^\tau_{x_{j-2}}\ldots\Phi^\tau_{x_1}(\eta,\,h)$ and, in this case, we write
$$\Phi^\tau_\alpha(\eta,\,h)
\ =\ \Phi^\tau_{x_{l}}\Phi^\tau_{x_{l-1}}\ldots\Phi^\tau_{x_2}\Phi_{x_1}^\tau(\eta,\,h)\,.$$
If~$\Phi^\tau_\alpha(\eta,\,0)=(\eta',\,h)$ where the resulting configuration~$\eta'$ is stable in~$U\subset V$, we say that the toppling sequence~$\alpha$ stabilizes~$\eta$ in~$U$.
For every sequence of sites~$\alpha=(x_1,\,\ldots,\,x_l)$, we define its odometer~$m_\alpha\in\N^V$ as
$$m_\alpha\ =\ \sum_{j=1}^l\delta_{x_j}\,,$$
which counts the number of times each site is toppled while applying~$\Phi^\tau_\alpha$.
For every sequence of sites~$\alpha=(x_1,\,\ldots,\,x_l)$, and every subset~$U\subset V$, we write~\smash{$\alpha\subset U$} if~$x_j\in U$ for every~$j\in\{1,\,\ldots,\,l\}$.
If~$\eta\in \N_\s^V$ is a configuration, we define the odometer of~$\eta$ in~$U$ as
\begin{equation}
\label{defOdometer}
m^\tau_{U,\,\eta}\ =\ \sup_{\alpha\subset U,\,\alpha\text{ legal}} m_\alpha\,.
\end{equation}
In the particular case~$U=V$, we omit~$U$ and simply write~$m^\tau_\eta$.
Let us also introduce the general notation
$$\norme{m}_A\ =\ \sum_{x\in A}m(x)$$
for every~\smash{$m\in\big(\N\cup\{+\infty\}\big)^V$}.

\subsection{Monotonicity properties}

A first notable property of the model is that adding particles to a configuration can only increase the number of topplings necessary to stabilize the configuration in a given set.
More precisely, we have (see Lemma~2.5 of~\cite{Rolla20}):

\begin{lemma}
\label{lemmaMonotonicity}
If~$\eta,\,\eta'\in\N_\s^V$ are such that~$\eta\leqslant\eta'$ and~$U\subset V$, then
$$m^\tau_{U,\,\eta}\ \leqslant\ m^\tau_{U,\,\eta'}\,.$$
\end{lemma}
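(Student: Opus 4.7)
The plan is to establish the stronger claim that every toppling sequence $\alpha \subset U$ legal for $(\eta, 0)$ is also legal for $(\eta', 0)$. Since the odometer $m_\alpha$ of a sequence depends only on $\alpha$ and not on the starting configuration, the inequality $m^\tau_{U,\eta} \leq m^\tau_{U,\eta'}$ will then follow immediately from the definition~(\ref{defOdometer}): each legal sequence contributing to the supremum on the left is also a candidate for the supremum on the right, so $m_\alpha \leq m^\tau_{U,\eta'}$ for all such $\alpha$, and taking the supremum concludes.

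The key step is a one-step monotonicity statement: if $\eta \leq \eta'$ and $\Phi^\tau_x$ is legal for $(\eta, h)$, then $\Phi^\tau_x$ is legal for $(\eta', h)$ too, and $\tau^{x, h(x)} \eta \leq \tau^{x, h(x)} \eta'$. Legality for $\eta'$ is immediate because $\eta'(x) \geq \eta(x) \geq 1$ implies that $x$ is unstable in $\eta'$. To verify that the order is preserved I would inspect the two types of instructions~(\ref{sleepToppling}) and~(\ref{jumpToppling}) case by case. For a jump $\tau_{xy}$, the only sites whose values change are $x$ (decreased by~$1$) and $y$ (increased by~$1$) in both configurations; the inequality at $x$ follows from $\eta(x) - 1 \leq \eta'(x) - 1$, while at $y$ the only slightly delicate check arises when $\eta(y)$ or $\eta'(y)$ equals $\s$, using the convention $\s + 1 = 2$ and the ordering $0 < \s < 1 < 2 < \cdots$ on~$\N_\s$ (for instance, if $\eta(y) = 0$ and $\eta'(y) = \s$, one gets $1 \leq 2$, and if $\eta(y) = \s$ and $\eta'(y) \geq 1$, one gets $2 \leq \eta'(y) + 1$). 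For a sleep instruction $\tau_{x\s}$, only the value at $x$ can change: if $\eta(x) = 1$ it becomes $\s$, and one checks that $\s \leq \eta'(x)$ for every $\eta'(x) \in \{1, 2, \ldots\}$, while if $\eta(x) \geq 2$ nothing changes on either side.

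Iterating this one-step claim along a legal sequence $\alpha = (x_1, \ldots, x_l) \subset U$ for $(\eta, 0)$, I obtain by induction on $l$ that $\alpha$ is legal for $(\eta', 0)$ and that the order $\eta \leq \eta'$ is preserved throughout, which yields the claim that feeds into the sup argument above.

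I do not expect any serious obstacle here: the proof is essentially bookkeeping about the site-wise representation. The only point requiring mild care is the convention mixing the symbol $\s$ with integers, which makes $\N_\s$ not quite a subset of $\N$ as an ordered set; the verification of one-step monotonicity must be performed carefully at a neighbour of the toppled site, where an $\s$ may coexist with an integer in the two configurations.
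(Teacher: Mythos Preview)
Your proposal is correct and follows exactly the paper's approach: the paper merely asserts (without detailed verification) that every $\tau$-legal sequence $\alpha\subset U$ for $\eta$ is also $\tau$-legal for $\eta'$, and you supply the routine one-step monotonicity check that justifies this. One tiny imprecision: in the sleep-instruction case with $\eta(x)=1$, what you need is $\s\leq(\tau_{x\s}\eta')(x)$ rather than $\s\leq\eta'(x)$; when $\eta'(x)=1$ the right-hand side becomes $\s$, not $1$, but the inequality still holds, so the conclusion is unaffected.
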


This follows from the fact that for every toppling sequence~$\alpha\subset U$, if~$\alpha$ is~$\tau$-legal for~$\eta$ then it is also~$\tau$-legal for~$\eta'$.

We also have the following comparison property between acceptable and legal topplings (we reproduce here Lemma~2.1 of~\cite{Rolla20}):

\begin{lemma}
\label{lemmaAcceptable}
If~$\alpha$ is an acceptable sequence of topplings that stabilizes~$\eta$ in a finite subset~$U\subset V$,
and~$\beta\subset U$ is a legal sequence of topplings for~$\eta$, then~$m_\beta\leqslant m_\alpha$.
\end{lemma}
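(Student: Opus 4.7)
I would prove this by induction on the length $k$ of $\beta$, relying on an auxiliary abelian/commutation sub-lemma for acceptable toppling sequences.

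The base case $k=0$ is trivial: $m_\beta = 0 \leq m_\alpha$. For the inductive step, write $\beta = (\beta',y)$ with $\beta' = (x_1,\ldots,x_{k-1})$ and $y = x_k$. Applying the induction hypothesis to $\alpha$ and the shorter legal sequence $\beta'$ gives $m_{\beta'} \leq m_\alpha$. It thus suffices to prove the strict inequality $m_{\beta'}(y) < m_\alpha(y)$, and I would argue this by contradiction: suppose $m_{\beta'}(y) = m_\alpha(y)$. Let $(\eta',h') = \Phi^\tau_{\beta'}(\eta,0)$; since $\Phi^\tau_y$ is legal for $(\eta',h')$, one has $\eta'(y) \geq 1$ as a positive integer.

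The heart of the argument is the following abelian sub-claim: because $m_{\beta'} \leq m_\alpha$, there exists an acceptable sequence $\gamma$ for $(\eta',h')$ with odometer $m_\gamma = m_\alpha - m_{\beta'}$ such that
$$
\Phi^\tau_\gamma(\eta',h') \ =\ \Phi^\tau_\alpha(\eta,0).
$$
Granting this, the final configuration reached from $(\eta',h')$ via $\gamma$ is stable in $U$ (since $\alpha$ stabilizes $\eta$ in $U$). But the hypothesis $m_{\beta'}(y) = m_\alpha(y)$ yields $m_\gamma(y) = 0$, so $y$ is never toppled during $\gamma$. The value at $y$ therefore starts at $\eta'(y) \geq 1$ and can only be modified by jump instructions from neighbouring topplings, each of which adds one to the count at $y$ (with the convention $\s + 1 = 2$). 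The final value at $y$ is consequently a positive integer, contradicting stability of $y \in U$.

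The main obstacle is the abelian sub-claim, which is a standard Diaconis--Fulton-type property but must be checked with some care here because we allow acceptable topplings at sites carrying a sleeping particle. The strategy is the usual one: prove a local commutation lemma stating that if $\Phi^\tau_x$ followed by $\Phi^\tau_z$ with $x \neq z$ is acceptable for some state $(\zeta,h)$, then so is the reverse, and the two orderings produce the same resulting state; then iterate adjacent transpositions to conclude that the final state of any acceptable sequence depends only on its odometer. The delicate cases of the commutation lemma are those where the two topplings interact through a jump instruction from $x$ to $z$ (or vice versa) and where the sleep instruction $\tau_{x\s}$ is applied at a site with exactly one active particle; a short case analysis, using the definitions \eqref{sleepToppling}--\eqref{jumpToppling} and the convention $\s + 1 = 2$, confirms commutativity in every case and yields the sub-claim, hence the lemma.
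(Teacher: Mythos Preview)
The paper does not give its own proof of this lemma: it is simply quoted as Lemma~2.1 of Rolla's survey~\cite{Rolla20}. Your inductive framework is the standard one, but your local commutation lemma is stated too strongly and is false. You assert that if $(x,z)$ with $x\neq z$ is acceptable for $(\zeta,h)$ then so is $(z,x)$; but take $\zeta(x)=1$, $\zeta(z)=0$ and $\tau^{x,h(x)}=\tau_{xz}$: then $(x,z)$ is acceptable (indeed legal) while $(z,x)$ is not, since $\zeta(z)=0<\s$. So one cannot freely iterate adjacent transpositions within an arbitrary acceptable sequence, and your derivation of the abelian sub-claim breaks down as written.

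The repair is to add the hypothesis that the toppling being moved earlier is already acceptable for the current state; the case analysis you sketch then does go through. With this corrected commutation in hand, however, the cleanest route (and the one in~\cite{Rolla20}) peels off the \emph{first} site $x$ of $\beta$ rather than the last. Since $x$ is unstable in $\eta$ and $x\in U$, the stabilizing sequence $\alpha$ must topple $x$ at least once (otherwise the value at $x$, which can only grow under topplings at other sites, would remain $\geq 1$ at the end). Writing $\alpha=(\alpha_1,x,\alpha_2)$ with $x\notin\alpha_1$, one swaps this first occurrence of $x$ to the front; each swap is licensed precisely because $x$, not yet toppled, retains a value $\geq 1$ at every intermediate stage of $\alpha_1$ and is therefore acceptable there. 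This yields an acceptable $(x,\alpha_1,\alpha_2)$ with the same odometer and final state, and the induction hypothesis applied to the tail of $\beta$ and to $(\alpha_1,\alpha_2)$ from the state $\Phi^\tau_x(\eta,0)$ finishes the proof. Your abelian sub-claim is in fact true, but establishing it rigorously amounts to carrying out exactly this peel-from-the-front argument; once that is done, the additional contradiction at the last site $y$ becomes superfluous.
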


This Lemma allows us to consider acceptable sequences of topplings when we are looking for upper bounds on~$m_{U,\,\eta}^\tau$.
We will use it in section~\ref{sectionProofThmLink}.

\subsection{The Abelian property}
\label{sectionAbelian}

We now describe the Abelian property, which is a key feature of the Activated Random Walk model, which we use extensively.
Let~$U\subset V$ be a finite subset of the sites of the graph.
The following Lemma corresponds to Lemma~2.4 of~\cite{Rolla20}:

\begin{lemma}[Abelian property]
\label{lemmaAbelian}
If~$\alpha$ and~$\beta$ are both~$\tau$-legal toppling sequences for~$\eta$ that are contained in~$U$ and stabilize~$\eta$ in~$U$, then~$m_\alpha=m_\beta=m^\tau_{U,\,\eta}$ and~$\Phi^\tau_\alpha(\eta,\,0)=\Phi^\tau_\beta(\eta,\,0)$.
\end{lemma}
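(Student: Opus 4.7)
The plan is to prove a \emph{least action principle} from which both conclusions of the lemma follow immediately: any $\tau$-legal toppling sequence $\gamma \subset U$ satisfies $m_\gamma \leqslant m_\beta$ whenever $\beta \subset U$ is $\tau$-legal and stabilizes $\eta$ in $U$. Applied to $\gamma = \alpha$ and then symmetrically, this yields $m_\alpha = m_\beta$ and identifies the common value with the supremum defining $m^\tau_{U,\eta}$.

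The first step I would carry out is a single-step commutation lemma: if $x \neq y$ and both $\Phi^\tau_x$ and $\Phi^\tau_y$ are legal for $(\eta,h)$, then $\Phi^\tau_y$ is legal for $\Phi^\tau_x(\eta,h)$, and $\Phi^\tau_x \circ \Phi^\tau_y(\eta,h) = \Phi^\tau_y \circ \Phi^\tau_x(\eta,h)$. This holds because the two topplings read their instructions from disjoint piles at sites $x$ and $y$, and each elementary instruction $\tau_{x\s}$ or $\tau_{xy}$ affects only the values $\eta(x)$ and $\eta(\cdot)$ at a neighbour; crucially, a toppling at $x$ never decreases $\eta(y)$ for $y \neq x$ (it can only leave it unchanged or increment it), so legality at $y$ is preserved, and the two updates commute pointwise in $\N_\s$ with the convention $\s+1=2$.

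The second step is the least action principle, proved by induction on the length $l$ of $\gamma = (x_1,\dots,x_l)$. The base case is trivial. For the inductive step, the site $x_1$ is unstable in $\eta$ (since $\Phi^\tau_{x_1}$ is legal); because $\beta$ stabilizes $\eta$ in $U$ and $x_1 \in U$, the site $x_1$ must appear in $\beta$, say first at position $j$. Iterating the commutation lemma $j-1$ times, I bring this toppling to the front to obtain a $\tau$-legal sequence $(x_1,\tilde\beta)$ stabilizing $\eta$ in $U$, with $m_{(x_1,\tilde\beta)} = m_\beta$. Then $(x_2,\dots,x_l)$ and $\tilde\beta$ are both $\tau$-legal sequences in $U$ starting from $\Phi^\tau_{x_1}(\eta,0)$, with $\tilde\beta$ stabilizing, so the inductive hypothesis gives $m_{(x_2,\dots,x_l)} \leqslant m_{\tilde\beta}$, whence $m_\gamma \leqslant m_\beta$.

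The final step is to deduce equality of the end configurations from equality of the odometers. Since $m_\alpha = m_\beta$, each site $x$ is toppled exactly $m_\alpha(x)$ times along either sequence, so both consume exactly the same prefix of the instruction pile above $x$. Repeatedly applying the commutation lemma, I can rearrange $\beta$ into $\alpha$ step by step without altering the final configuration, yielding $\Phi^\tau_\alpha(\eta,0) = \Phi^\tau_\beta(\eta,0)$. The main technical obstacle is verifying the commutation lemma in the presence of sleep instructions: the case $\eta(x)=1$ with $\tau^{x,h(x)} = \tau_{x\s}$ turns $\eta(x)$ into $\s$, which sits below $1$ in the order on $\N_\s$ but still above $0$, so subsequent legality at $x$ is not in question here, and $\tau_{x\s}$ does not touch $\eta(y)$; this confirms that the swap preserves legality and commutes with any toppling at a different site.
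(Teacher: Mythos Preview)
Your proof is correct and follows the standard route to the Abelian property: a commutation lemma for topplings at distinct sites, a least action principle by induction on the length of the legal sequence, and then equality of final configurations by rearrangement. The paper itself does not prove this lemma; it simply cites it as Lemma~2.4 of~\cite{Rolla20}, where essentially the same argument appears (in fact the paper states the slightly stronger Lemma~\ref{lemmaAcceptable}, with an \emph{acceptable} stabilizing sequence dominating any legal one, from which your least action principle is an immediate special case). One small remark: your final paragraph, deducing $\Phi^\tau_\alpha(\eta,0)=\Phi^\tau_\beta(\eta,0)$ from $m_\alpha=m_\beta$ via repeated commutation, is correct but would read more cleanly as a second induction on the common length, bringing the first site of~$\alpha$ to the front of~$\beta$ exactly as you did in the least action step.
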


If~$A\subset U$, the Abelian property tells us that
$$\norme{m^\tau_{U,\,\eta}}_A
\ =\ \sum_{x\in A} m^\tau_{U,\,\eta}(x)$$
is the number of topplings on the sites of~$A$ necessary to stabilize the configuration~$\eta$ in~$U$ using the instructions in~$\tau$, regardless of the order with which the instructions are used (note that this number may be infinite).
In the particular case where~$A=U$, we simply write~\smash{$\|m^\tau_{U,\,\eta}\|=\|m^\tau_{U,\,\eta}\|_U$}.
If~\smash{$\|m^\tau_{U,\,\eta}\|<\infty$}, which means that~$\eta$ can be stabilized in~$U$ with a finite number of topplings, then the Abelian property ensures that the configuration reached when stabilizing~$\eta$ in~$U$ does not depend on the toppling sequence used, which allows us to write this final configuration~$\sigma^\tau_{U,\,\eta}$ (which is defined only if~\smash{$\|m^\tau_{U,\,\eta}\|<\infty$}).
In the case where~$V$ is finite and~$U=V$, we simply write~$\sigma^\tau_{\eta}=\sigma^\tau_{U,\,\eta}$.
With these notations, if~$V$ is finite, then for any~$\tau$-legal sequence~$\alpha$ which stabilizes~$\eta$, we have~\smash{$\Phi^\tau_\alpha(\eta,\,0)=(\sigma^\tau_\eta,\,m^\tau_{\eta})$}.

\subsection{Random instructions and dynamics}
\label{sectionRandomInstructions}

To recover the Activated Random Walk model described in paragraph~\ref{sectionDefARW}, one needs to draw a random field of instructions with a certain probability distribution.
Let~$\Pl$ be a probability measure on the set of all possible fields of instructions, which is such that the instructions~$(\tau^{x,j})_{x\in V,\,j\in\N}$ are independent, with, for every~$x\in V$ and~$j\in\N$,
\begin{equation}
\label{probaTau1}
\Pl\big(\,\tau^{x,j}=\tau_{x\s}\,\big)\ =\ \frac\lambda{1+\lambda}
\end{equation}
and, for every~$y$ neighbour of~$x$ in the graph,
\begin{equation}
\label{probaTau2}
\Pl\big(\,\tau^{x,j}=\tau_{xy}\,\big)\ =\ \frac 1{(1+\lambda)D}\,,
\end{equation}
where~$D$ is the common degree of all the vertices in the graph~$G$ (which is assumed to be vertex-transitive).
Given this random field~$\tau=(\tau^{x,j})_{x\in V,\,j\in\N}$ distributed according to~$\Pl$, the Activated Random Walk model is a process~$(\eta_t,\,h_t)_{t\geqslant 0}$, where~$h_0\equiv 0$ and the initial distribution~$\eta_0$ is independent of~$\tau$ and follows a translation-ergodic distribution on~$\N_\s^V$ with finite density, and with, for each site~$x\in V$, a transition rate of~$(1+\lambda)\abs{\eta_t(x)}\mathbf{1}_{\eta_t(x)\geqslant 1}$ from~$(\eta_t,\,h_t)$ to~$\Phi^\tau_x(\eta_t,\,h_t)$.
It is clear that such a process exists on any finite graph (like the torus~$\torus$), and we refer to~\cite{Rolla20} for a proof that such a process also exists on~$\Z^d$.

Thanks to Theorem~\ref{thmPhaseTransition}, to study the critical curve~$\mu_c(\lambda)$, we may restrict ourselves to the case where the initial distribution~$\eta_0$ is i.i.d.\ with a Poisson number of active particles with parameter~$\mu$ on each site (and no sleeping particles).
Let us denote by~$\Pm$ the corresponding probability measure on the set~$\N_s^V$.
We write~$\Pml=\Pm\otimes\Pl$ for the probability measure where the initial configuration~$\eta_0$ and the field of instructions~$\tau$ are independent and respectively distributed according to~$\Pm$ and~$\Pl$, and we write~$\Em$,~$\El$ and~$\Eml$ for the corresponding expectations.

In what follows, we will only work with the model on finite graphs, using Lemma~\ref{lemmaMn} below to relate the behaviour of the model on~$\Z^d$ to its behaviour in finite boxes.
If the graph~$V$ is finite, the Activated Random Walk model with Poisson i.i.d.\ initial distribution can be constructed as follows.
We draw the initial configuration~$\eta_0$ and the field of toppling instructions~$\tau$ distributed according to~$\Pml$, and we draw an infinite collection of Poisson point processes~$\theta_i\subset\R_+$ for~$i\in\N$, independent of~$(\eta_0,\,\tau)$, these processes being independent and having intensity~$1+\lambda$.
We draw an infinite number of clocks so that the collection~$(\theta_i)_{i\in\N}$ is independent of the number of particles in~$\eta_0$ but, in fact, we will only use the clocks~$\theta_i$ for~$i<|\eta_0|$.
The dynamics is then defined as follows.
At each time when one of these clocks rings, we choose a particle at random and, if this particle is active, we topple the site where it stands (in other words, we chose a site at random with the probability of choosing each site proportional to the number of particles on it, and we topple this site if it contains active particles).

\subsection{A property of the fixating phase}

To show that the slow phase on the torus~$\torus$ implies an active phase on the infinite lattice~$\Z^d$, we rely on the following result, which is a rephrasing of Theorem~2.11 of~\cite{Rolla20} which gives a sufficient condition for activity on~$\Z^d$ and which was originally proved by~\cite{RT18}.
Remember that~$\Lambda_n$ is the square box of~$\Z^d$ containing~$n^d$ sites and centered at~$0$, as defined by~(\ref{defBoite}).

\begin{lemma}
\label{lemmaMn}
Let us write
$$M_n\ =\ \abs{\eta_0}_{\Lambda_n}\,-\,\abs{\sigma_{\Lambda_n,\,\eta_0}^\tau}_{\Lambda_n}\,,$$
which is the number of particles which jump out of the box~$\Lambda_n\subset\Z^d$ when stabilizing the configuration~$\eta_0$ in~$\Lambda_n$ via legal topplings, with particles being killed upon leaving~$\Lambda_n$.
For every~$\lambda\in(0,\,\infty]$ and~$\mu<\mu_c(\lambda)$, we have
$$\lim\limits_{n\to\infty}\,\frac{\Eml\big[M_n\big]}{\abs{\Lambda_n}}\ =\ 0\,.$$
\end{lemma}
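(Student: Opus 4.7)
The plan is to leverage the Diaconis--Fulton site-wise representation. Since the Poisson initial distribution satisfies $\Eml[|\eta_0|_{\Lambda_n}] = \mu|\Lambda_n|$, one has the identity
$$\frac{\Eml[M_n]}{|\Lambda_n|}\ =\ \mu\ -\ \frac{\Eml\bigl[|\sigma_{\Lambda_n,\eta_0}^\tau|_{\Lambda_n}\bigr]}{|\Lambda_n|}\,,$$
reducing the claim to showing that the expected density of sleeping particles in the stable finite-volume configuration converges to $\mu$.

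The key input is that $\mu<\mu_c(\lambda)$ yields almost-sure fixation of the infinite-volume dynamics. Monotonicity in the stabilization set (Lemma~\ref{lemmaMonotonicity}) combined with the Abelian property (Lemma~\ref{lemmaAbelian}) shows that the finite-volume odometers $m_{\Lambda_n,\eta_0}^\tau(x)$ are nondecreasing in $n$, and fixation gives their pointwise limit $m_\infty^\tau(x)$ to be $\Pml$-a.s.\ finite at every $x$. The resulting infinite-volume stable configuration $\sigma_\infty^\tau\in\{0,\s\}^{\Z^d}$ satisfies, by translation invariance of $\Pml$ and conservation of the total number of particles, $\Pml(\sigma_\infty^\tau(0) = \s) = \mu$, so $\Eml[|\sigma_\infty^\tau|_{\Lambda_n}]/|\Lambda_n| = \mu$ for every $n$.

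The task is therefore to control the discrepancy between $\sigma_{\Lambda_n,\eta_0}^\tau|_{\Lambda_n}$ and $\sigma_\infty^\tau|_{\Lambda_n}$. Using the Abelian property, I would realize the infinite-volume stabilization in two phases: first stabilize $\Lambda_n$ with sinks outside (using $m_{\Lambda_n,\eta_0}^\tau$ topplings and expelling $M_n$ particles across $\partial\Lambda_n$), then propagate the remaining activity on $\Z^d$. The deficit $|\sigma_\infty^\tau|_{\Lambda_n} - |\sigma_{\Lambda_n,\eta_0}^\tau|_{\Lambda_n}$ is then dominated by the net flux across $\partial\Lambda_n$ during the second phase, which is controlled by the inner-boundary sum $\sum_{x\in\partial\Lambda_n}\bigl(m_\infty^\tau(x) - m_{\Lambda_n,\eta_0}^\tau(x)\bigr)$.

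The main obstacle is that $\Eml[m_\infty^\tau(0)]$ may well be infinite, so a direct expectation bound on this flux fails. To circumvent this I would use the monotone pointwise convergence $m_{\Lambda_n,\eta_0}^\tau\uparrow m_\infty^\tau$ together with the a.s.\ finiteness of $m_\infty^\tau$: for any fixed threshold $K$, the event $\{m_\infty^\tau(x)\leq K\}$ has probability $\Pml$-close to $1$ for $K$ large, and on this event the odometer difference at $x$ vanishes once $n$ is large enough. The complementary event is absorbed using the trivial bound $M_n \leq |\eta_0|_{\Lambda_n}$ together with a Markov-type argument on the proportion of boundary sites where $m_\infty^\tau$ exceeds $K$. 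Letting $n\to\infty$ first and then $K\to\infty$ gives $\Eml[M_n]/|\Lambda_n|\to 0$.
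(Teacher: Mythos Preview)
The paper does not give its own proof of this lemma: it is quoted as a rephrasing of Theorem~2.11 in Rolla's survey~\cite{Rolla20}, originally due to Rolla and Tournier~\cite{RT18}. So there is no in-paper argument to compare against; what matters is whether your sketch can be completed.

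Your overall strategy is the right one and matches the literature: write $\Eml[M_n]/|\Lambda_n|=\mu-\frac{1}{|\Lambda_n|}\sum_{x\in\Lambda_n}\Pml\bigl(\sigma_{\Lambda_n,\eta_0}^\tau(x)=\s\bigr)$, invoke conservation of density to identify $\Pml(\sigma_\infty^\tau(0)=\s)=\mu$ (this is a nontrivial external input, proved in~\cite{Shellef10,AGG10}), and then use that fixation forces $m_{\Lambda_n,\eta_0}^\tau\uparrow m_\infty^\tau<\infty$ pointwise.

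The gap is in your last paragraph. You bound the deficit by a sum over $x\in\partial\Lambda_n$ of odometer differences, then claim that ``on the event $\{m_\infty^\tau(x)\le K\}$ the odometer difference at~$x$ vanishes once~$n$ is large enough.'' But the sites $x$ in this sum lie on $\partial\Lambda_n$ and therefore move with~$n$; there is no fixed~$x$ for which to take $n\to\infty$, and for a site adjacent to $\partial\Lambda_n$ the gap $m_\infty^\tau(x)-m_{\Lambda_n,\eta_0}^\tau(x)$ need not be small at all. The truncation at level~$K$ does not repair this, because you still need to control $m_\infty^\tau(x)-m_{\Lambda_n,\eta_0}^\tau(x)$ on boundary sites where $m_\infty^\tau(x)\le K$, and that difference can be anything up to~$K$ there.

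A clean fix avoids the boundary-flux bookkeeping entirely. For any finite $B\ni 0$, on the event
\[
A_m\ =\ \bigl\{\,m_{\Lambda_m,\eta_0}^\tau(y)=m_\infty^\tau(y)\ \text{for every }y\text{ with }d(y,0)\le 1\,\bigr\}
\]
one has $\sigma_{B,\eta_0}^\tau(0)=\sigma_\infty^\tau(0)$ whenever $\Lambda_m\subset B$, since $m_{\Lambda_m,\eta_0}^\tau\le m_{B,\eta_0}^\tau\le m_\infty^\tau$ forces the odometers to coincide on a neighbourhood of~$0$. Because $m_\infty^\tau$ is a.s.\ finite, $\Pml(A_m^c)\to 0$. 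By translation invariance this gives, uniformly over $x\in\Lambda_n$ with $d(x,\partial\Lambda_n)\ge m$,
\[
\bigl|\Pml\bigl(\sigma_{\Lambda_n,\eta_0}^\tau(x)=\s\bigr)-\mu\bigr|\ \le\ \Pml(A_m^c)\,,
\]
while the remaining boundary layer has relative volume $O(m/n)$. Sending $n\to\infty$ and then $m\to\infty$ yields the claim. This is essentially the argument in~\cite{RT18}; your sketch has the right ingredients but routes them through a boundary sum where the limit interchange does not go through as written.
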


In other words, this Lemma tells us that if, when stabilizing an arbitrary large box, a uniformly positive fraction of particles jump out of the box with a uniformly positive probability, then~$\mu\geqslant\mu_c(\lambda)$.

\section{Proof of Theorem~\ref{thmSlowPhase}}
\label{sectionProofSlowPhase}

We now turn to the study of the fixation time on the torus~$\torus$, following the strategy presented in section~\ref{sectionSketchThmSlowPhase}.
We start with a series of preliminary estimates in sections~\ref{sectionSleepless} to~\ref{sectionMC}, before turning to the proof that the fixation time is exponentially large, first in terms of number of topplings from a deterministic initial configuration in section~\ref{sectionDeterministicEta}, then with a Poisson initial distribution in section~\ref{sectionSlowPhaseTopplings}, before the final proof of Theorem~\ref{thmSlowPhase} in section~\ref{sectionProofSlowPhaseFinal}.

\subsection{Preventing sleep instructions outside a given set}
\label{sectionSleepless}

As explained in paragraph~\ref{sectionSketchThmSlowPhase}, we first fix a set of sites~$A\subset \torus$, and we aim to find an upper bound on the probability that the system eventually fixates with one sleeping particle on each site of~$A$ (i.e., that it reaches the final stable configuration~$\sigma^\tau_\eta=\s\,\mathbf{1}_A$) within a given time or, rather, within a given number of topplings.
Since we are looking for a lower bound on this number of topplings, we may study the number of topplings on the sites of~$A$, which gives a lower bound on the total number of topplings.
Thus, our aim is to bound from above the probability
$$\Pl\Big(\,\big\{\norme{m^\tau_{\eta}}_A<M\big\}\cap\big\{\sigma^\tau_\eta=\s\mathbf{1}_A\big\}\,\Big)\,,$$
for a certain number~$M$ which will be chosen later, where we recall that~\smash{$\norme{m^\tau_{\eta}}_A$} denotes the number of toppling instructions performed on the sites of~$A$ during the complete stabilization of the configuration~$\eta$ in the torus using the instructions in~$\tau$ (do not get mixed with~\smash{$\norme{m_{A,\,\eta}^\tau}$}, which counts the number of topplings necessary to stabilize the configuration on~$A$, i.e., until there are no more active particles on the sites of~$A$).

The idea is now to modify the model by not allowing particles to sleep outside of~$A$ which, as we will show in Lemma~\ref{lemmaSleepless} below, can only increase the above probability.
Thus, we define a new distribution~$\PlA$ on the set of the fields of toppling instructions, which is such that we do not draw anymore sleep instructions on the sites outside of~$A$.
Namely, for every~$x\in A$ and~$j\in\N$, we have the same distribution for the instruction~$\tau^{x,j}$ given by~(\ref{probaTau1}) and~(\ref{probaTau2}), but whenever~$x\notin A$, we set
$$\PlA\big(\,\tau^{x,j}=\tau_{xy}\,\big)\ =\ \frac 1{2d}$$
for every~$y$ neighbour of~$x$ in~$\torus$, and~\smash{$\PlA(\tau^{x,j}=\tau_{x\s})=0$}, and different instructions remain of course independent.
We now prove the following:

\begin{lemma}
\label{lemmaSleepless}
For each initial configuration~$\eta:\torus\to\N_\s$, every~$A\subset \torus$ and every~$M\in\N$, we have the comparison
$$\Pl\Big(\,\big\{\norme{m^\tau_{\eta}}_A<M\big\}\cap\big\{\sigma^{\tau}_\eta=\s\,\mathbf{1}_A\big\}\,\Big)
\ \leqslant\ \PlA\Big(\,\norme{m^\tau_{\eta}}_A<M\,\Big)\,.$$
\end{lemma}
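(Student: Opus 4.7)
The plan is to build a coupling of $\tau\sim\Pl$ with a field $\tilde\tau$ distributed according to $\PlA$ such that $\tilde\tau$ is a deterministic function of $\tau$ and, on the event $\{\sigma^\tau_\eta=\s\,\mathbf{1}_A\}$, the deterministic inequality $\norme{m^{\tilde\tau}_\eta}_A\leq\norme{m^\tau_\eta}_A$ holds. Granted this, the bound of the lemma follows at once: on the event on the left-hand side one has $\norme{m^{\tilde\tau}_\eta}_A\leq\norme{m^\tau_\eta}_A<M$, hence the left-hand side is bounded by $\Pl(\norme{m^{\tilde\tau}_\eta}_A<M)$, which equals $\PlA(\norme{m^\tau_\eta}_A<M)$ since $\tilde\tau$ has law $\PlA$. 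The coupling itself is the natural one: set $\tilde\tau^{x,\cdot}=\tau^{x,\cdot}$ for every $x\in A$ and, for $x\notin A$, let $\tilde\tau^{x,\cdot}$ be the subsequence of $\tau^{x,\cdot}$ obtained by discarding every sleep instruction. Under $\Pl$ the instructions $(\tau^{x,j})_{j\geq 1}$ are i.i.d.\ with a sleep appearing with probability $\lambda/(1+\lambda)$ and each of the $2d$ jumps with probability $1/(2d(1+\lambda))$, so a routine thinning argument for i.i.d.\ Bernoulli trials shows that the surviving subsequence is i.i.d.\ uniform over the jump instructions, which is precisely the $\PlA$-law at a site outside of~$A$.

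To prove the deterministic comparison, I fix a $\tau$-legal sequence $\alpha$ stabilizing $\eta$ in~$\torus$ with $m_\alpha=m^\tau_\eta$ and assume we are on the event $\{\sigma^\tau_\eta=\s\,\mathbf{1}_A\}$. The aim is then to produce a $\tilde\tau$-legal stabilizing sequence $\tilde\alpha$ with $m_{\tilde\alpha}(x)\leq m_\alpha(x)$ for every $x\in A$, and I proceed by induction on the number $N$ of sleep instructions that $\alpha$ consumes at sites of $\torus\setminus A$. If $N=0$, then $\alpha$ uses no sleep outside of~$A$, so $\alpha$ itself is $\tilde\tau$-legal and we are done. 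Otherwise, let $(x_0,j_0)$ be the pile-index of the earliest sleep instruction used by $\alpha$ at some site $x_0\notin A$.

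Because $\sigma^\tau_\eta$ has no sleeping particle at $x_0$, the particle put to sleep at that step must later be woken up by an arrival from a neighbour of~$x_0$ and then toppled again. I define $\alpha'$ from $\alpha$ by removing only that single sleep-toppling, and $\tau'$ from $\tau$ by deleting the instruction at position $j_0$ from the pile above~$x_0$ (all other piles being unchanged). The deleted sleep emits no particle, so every intermediate configuration reached by $\alpha'$ coincides with the corresponding one reached by $\alpha$ away from $x_0$, and differs at $x_0$ only by carrying one extra active particle during the interval between the removed sleep and the subsequent wake-up, after which both configurations coincide again at $x_0$ as well (in $\alpha$ the wake-up turns $\s+1=2$ active particles, while in $\alpha'$ it turns $1+1=2$). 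Throughout that critical interval no toppling at $x_0$ can occur in $\alpha$, because a sleeping particle cannot be toppled legally; consequently none is attempted in $\alpha'$ either, and every toppling of $\alpha'$ at a site $y\neq x_0$ sees an unchanged configuration and consumes the same instruction as in $\alpha$, hence remains legal. Therefore $\alpha'$ is $\tau'$-legal, stabilizes $\eta$ to the same final configuration $\s\,\mathbf{1}_A$, and satisfies $m_{\alpha'}(x_0)=m_\alpha(x_0)-1$ together with $m_{\alpha'}(y)=m_\alpha(y)$ for $y\neq x_0$; in particular $m_{\alpha'}$ agrees with $m_\alpha$ on~$A$. Applying the induction hypothesis to $(\tau',\alpha')$, which uses one fewer sleep outside of~$A$, produces the desired sequence~$\tilde\alpha$.

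The main delicate point is verifying that after the single-sleep deletion the subsequent topplings of $\alpha'$ really consume the instructions of the shifted pile $\tau'$ in the prescribed order: concretely, that for every $k\geq j_0$ the $k$-th toppling at $x_0$ in $\alpha'$ uses the instruction $\tau^{x_0,k+1}$, which by construction equals $(\tau')^{x_0,k}$. The first $j_0-1$ topplings at $x_0$ are common to $\alpha$ and $\alpha'$ and consume $\tau^{x_0,1},\ldots,\tau^{x_0,j_0-1}=(\tau')^{x_0,1},\ldots,(\tau')^{x_0,j_0-1}$; the sleep at position $j_0$ is removed; and every subsequent toppling at $x_0$ keeps its relative order and consumes the next available instruction in the shifted pile. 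This combinatorial bookkeeping, together with the configuration-tracking argument of the previous paragraph which ensures that all $\tau'$-topplings are legal and that the final configurations match, closes the inductive step and hence the proof.
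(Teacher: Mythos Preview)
Your proof is correct and uses essentially the same coupling as the paper: delete the sleep instructions at sites outside~$A$ to pass from $\tau\sim\Pl$ to $\tilde\tau\sim\PlA$, and show that on the event $\{\sigma^\tau_\eta=\s\,\mathbf{1}_A\}$ the $\tau$-legal stabilizing sequence $\alpha$ can be pruned to a $\tilde\tau$-legal stabilizing sequence with the same odometer on~$A$. The only organizational difference is that the paper removes all the sleep topplings from $\alpha$ at once and proves by induction along the steps of $\alpha$ that the intermediate configurations satisfy $\eta_i\leqslant\eta'_i$ with equality on~$A$, whereas you remove the sleeps one at a time and induct on their number; both arguments are valid and yield the same conclusion.
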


\begin{proof}
Let~$\eta:\torus\to\N_\s$, let~$A\subset \torus$ and~$M\in\N$.
Let us consider the natural coupling between~$\Pl$ and~$\PlA$ which consists in drawing~$\tau'$ according to~$\PlA$ and constructing~$\tau$ by inserting, between each jump instruction of~$\tau'$ on the sites~$x\notin A$, a number of sleep instructions which follows a geometric distribution with parameter~$\lambda/(1+\lambda)$, re-indexing the obtained stacks of instructions.

We now assume that~\smash{$\big\|m^\tau_{\eta}\big\|_A<M$} and~$\sigma^{\tau}_\eta=\s\,\mathbf{1}_A$, and we show that it implies that~\smash{$\big\|m^{\tau'}_{\eta}\big\|_A<M$}.
Since~$\sigma^{\tau}_\eta=\s\,\mathbf{1}_A$, we know that there exists a~$\tau$-legal toppling sequence~$\alpha=(x_1,\,\ldots,\,x_l)$ such that~$\Phi_\alpha^\tau(\eta,\,0)=(\s\,\mathbf{1}_A,\,m_\eta^\tau)$.
We now consider the toppling sequence~$\alpha'$ which is obtained from~$\alpha$ by removing the topplings which correspond to sleep instructions out of~$A$.

For each~$i\in\{0,\,\ldots,\,l\}$, let us write~$\alpha_i=(x_1,\,\ldots,\,x_i)$ (with the convention that~$\alpha_0$ is an empty toppling sequence), and let us write~$\alpha'_i$ for the sequence obtained from~$\alpha_i$ by removing the sleep instructions out of~$A$ (so that~$\alpha'_i$ may be shorter than~$\alpha_i$), and let us consider the configurations~$\eta_i$ and~$\eta'_i$ defined by~$\Phi_{\alpha_i}^\tau(\eta,\,0)=(\eta_i,\,m_{\alpha_i})$ and~\smash{$\Phi_{\alpha'_i}^{\tau'}(\eta,\,0)=(\eta'_i,\,m_{\alpha'_i})$}, if~\smash{$\Phi_{\alpha'_i}^{\tau'}$} is legal for~$(\eta,\,0)$.

Let us show by induction that, for every~$i\in\{0,\,\ldots,\,l\}$,
\begin{equation}
\label{inductionHypothesis}
\Phi_{\alpha'_i}^{\tau'}\text{ is legal for }(\eta,\,0)\,,\quad
\eta_i\ \leqslant\ \eta'_i
\quadet
\forall\,x\in A\quad\eta_i(x)\ =\ \eta'_i(x)\,.
\end{equation}
This is true for~$i=0$ since~$\eta_0=\eta'_0=\eta$.
Assume that~$i\in\{0,\,\ldots,\,l-1\}$ is such that the induction hypothesis~(\ref{inductionHypothesis}) holds.
Since~$x_{i+1}$ is by definition unstable in~$\eta_i$ (because~$\alpha$ is a~$\tau$-legal toppling sequence) and~$\eta'_i\geqslant\eta_i$, we know that~$x_{i+1}$ is also unstable in~$\eta'_i$, therefore~\smash{$\Phi_{\alpha'_{i+1}}^{\tau'}$} is legal for~$(\eta,\,0)$.

Let~$j=m_{\alpha_i}(x_{i+1})$.
If~$x_{i+1}\in A$ or~$\tau^{x_{i+1},j}$ is a jump instruction, then both~$\eta_{i+1}$ and~$\eta'_{i+1}$ are obtained from~$\eta_i$ and~$\eta'_i$ by performing the toppling operation~$\tau^{x_{i+1},j}$, thus~(\ref{inductionHypothesis}) remains satisfied at rank~$i+1$.
Otherwise, if~$x_{i+1}\notin A$ and~$\tau^{x_{i+1},j}$ is a sleep instruction, then we have~$\eta'_{i+1}=\eta'_i$ and~$\eta_{i+1}=\tau_{x_{i+1}\s}\eta_i$, whence~$\eta_{i+1}\leqslant\eta_i$ and~$\eta_{i+1}(x)=\eta_i(x)$ for all~$x\in A$.
Thus, we also still have~(\ref{inductionHypothesis}) at rank~$i+1$.

Therefore, it follows by induction that for all~$x\in A$,~$\eta'_l(x)=\eta_l(x)=\s$.
The number of particles being conserved, we deduce that~$\eta'_l=\s\,\mathbf{1}_A$, which means in particular that~$\alpha'$ is a~$\tau'$-legal toppling sequence which stabilizes~$\eta$.
By the Abelian property, this implies that~\smash{$m_\eta^{\tau'}=m_{\alpha'}$}.
Besides, by definition of~$\alpha'$, we have
$$\big\|m_{\alpha'}\big\|_A
\ =\ \big\|m_{\alpha}\big\|_A
\ =\ \big\|m^\tau_{\eta}\big\|_A
\ <\ M\,,$$
whence~\smash{$\big\|m^{\tau'}_{\eta}\big\|_A<M$}, which concludes the proof, since~$\tau'$ is distributed according to~$\PlA$ and~$\tau$ is distributed according to~$\Pl$.
\end{proof}

\subsection{Starting with one active particle on each sleeping site}
\label{sectionWalkToA}

We now show that, if we look for a lower bound on~\smash{$\big\|m^\tau_{\eta}\big\|_A$}, the worst initial configuration is the configuration~$\eta=\mathbf{1}_A$, where there is exactly one active particle on each site of~$A$.
Indeed, we prove:

\begin{lemma}
\label{lemmeWalkToA}
For any subset~$A\subset {\torus}$ and for any configuration~$\eta:\torus\to\N$ with~$|\eta|=|A|$ active particles, under the distribution~\smash{$\PlA$} defined in paragraph~\ref{sectionSleepless}, the number of topplings performed on~$A$ to stabilize the configuration~$\eta$ in~${\torus}$, namely~\smash{$\big\|m^\tau_{\eta}\big\|_A$}, stochastically dominates~\smash{$\big\|m^\tau_{\mathbf{1}_A}\big\|_A$}, the number of topplings performed on~$A$ to stabilize the configuration~$\mathbf{1}_A$ in~${\torus}$.
\end{lemma}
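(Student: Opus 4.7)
The plan is to decompose the stabilization of $\eta$ into two phases using the Abelian structure, then to apply a strong Markov argument on the stacks of instructions. \textbf{Phase 1} drives $\eta$ all the way to $\mathbf{1}_A$ by a legal toppling sequence $\alpha_1$ that never effectively uses a sleep instruction, so that no sleeping particle is ever created. \textbf{Phase 2} then stabilizes $\mathbf{1}_A$ using the instructions that remain at each site. Writing $m_1=m_{\alpha_1}$ and $\tilde\tau^{x,j}:=\tau^{x,m_1(x)+j}$ for the shifted instruction field, every legal toppling sequence $\alpha_2$ for $(\mathbf{1}_A,0)$ under $\tilde\tau$ makes $\alpha_1\alpha_2$ a legal toppling sequence for $(\eta,0)$ under $\tau$ with odometer $m_1+m_{\alpha_2}$. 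Taking the supremum over such $\alpha_2\subset\torus$ and restricting to $A$, the definition~(\ref{defOdometer}) of the odometer gives the pointwise bound
\[ \norme{m^\tau_\eta}_A \ \geqslant\ \norme{m_1}_A+\norme{m^{\tilde\tau}_{\mathbf{1}_A}}_A \ \geqslant\ \norme{m^{\tilde\tau}_{\mathbf{1}_A}}_A. \]

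To construct Phase 1, I would use an Internal DLA procedure: while some site of $\torus\setminus A$ still carries a particle or some site of $A$ carries at least two particles, pick one such excess particle and successively topple its current location until it first enters an empty site of $A$, where it is left alone. Any sleep instruction encountered along the way is a no-op by~(\ref{sleepToppling}), since the toppled site always holds at least two particles at that moment. Because no site of $A$ is ever toppled while holding exactly one particle, no sleeping particle is ever created, and the particle count is conserved throughout. Each individual walk terminates almost surely by recurrence of the simple random walk on $\torus$, so after at most $|A|$ walks the configuration is exactly $\mathbf{1}_A$ (using $|\eta|=|A|$), yielding the desired finite legal sequence $\alpha_1$ with odometer $m_1$.

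To conclude, I would invoke the strong Markov property for the independent i.i.d.\ instruction stacks: since $m_1$ is measurable with respect to the initial segments $(\tau^{y,j})_{j<m_1(y)}$ of the stacks, the shifted field $\tilde\tau$ has the same distribution as $\tau$ under $\PlA$ and is independent of $m_1$. Hence $\norme{m^{\tilde\tau}_{\mathbf{1}_A}}_A$ has the same law as $\norme{m^{\tau}_{\mathbf{1}_A}}_A$, and the pointwise bound above upgrades to the claimed stochastic domination.

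The main technical point to check carefully will be that the IDLA-style procedure is indeed a well-defined finite legal toppling sequence; in particular, one must account for the wasted sleep instructions at sites of $A$ that momentarily carry at least two particles (these contribute to $m_1$ but do not disturb the IDLA dynamics), and verify that, modulo these no-ops, the walker's effective motion is that of the simple random walk on $\torus$ to which recurrence applies.
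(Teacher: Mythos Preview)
Your proposal is correct and follows essentially the same approach as the paper: both construct a legal toppling sequence taking $\eta$ to $\mathbf{1}_A$ without ever toppling a singly-occupied site of $A$ (so no particle falls asleep), then concatenate with sequences legal for $(\mathbf{1}_A,0)$ and invoke the strong Markov property for the instruction stacks. Your IDLA-style, particle-by-particle description of Phase~1 is a specific implementation of the paper's site-centric procedure (``topple any unstable site outside $A$ or any site of $A$ with at least two particles''), and your explicit appeal to recurrence of SRW on $\torus$ makes the almost-sure termination more transparent than the paper's bare assertion $\PlA(\calF)=1$.
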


\begin{proof}
Let~$A\subset {\torus}$ and~$\eta:\torus\to\N$ such that~$|\eta|=|A|$.
We have to prove that, for all~$M\in\N$,
$$\PlA\Big(\,\norme{m^\tau_{\eta}}_A<M\,\Big)
\ \leqslant\ \PlA\Big(\,\norme{m^\tau_{\mathbf{1}_A}}_A<M\,\Big)\,.$$
Thus, we fix~$M\in\N$.
By the definition~(\ref{defOdometer}) of the odometer~$m_\eta^\tau$, we have (since we now deal with only one field of toppling instructions~$\tau$, we simply write \guillemets{legal} instead of~\guillemets{$\tau$-legal})
$$\norme{m^\tau_{\eta}}_A
\ =\ \sup\limits_{\alpha\text{ legal for }(\eta,\,0)}\ \norme{m_\alpha}_A\,,$$
whence
\begin{equation}
\label{ProbaForallLegal}
\PlA\Big(\,\norme{m^\tau_{\eta}}_A<M\,\Big)
\ =\ \PlA\Big(\,\forall\,\alpha\text{ legal for }(\eta,\,0)\,,\  \norme{m_\alpha}_A<M\,\Big)\,.
\end{equation}
Let~$\tau$ be distributed according to~$\PlA$, and let us consider the following toppling procedure: as long as there is at least an unstable site which is not in~$A$, or a site of~$A$ containing more than one particle, we topple one of these sites.
Note that, doing so, we never topple a site of~$A$ with a single particle, so that no particle can fall asleep during this procedure (remember that there are no sleep instructions out of~$A$ because~$\tau$ is drawn according to~$\PlA$).
We continue this procedure until there are no more sites of~$A$ with at least two particles and no more occupied site out of~$A$, and we denote by~$\calF$ the event that this procedure terminates in a finite number of topplings.
Since we took~$|\eta|=|A|$, we have~\smash{$\PlA(\calF)=1$}.
If this event~$\calF$ is realized, the procedure yields a finite legal sequence of topplings~$\beta$ such that~$\Phi^\tau_{\beta}(\eta,\,0)=(\mathbf{1}_A,\,m_{\beta})$, where~$\mathbf{1}_A$ is the configuration where there is exactly one particle on each site of~$A$.
In this case, for any sequence of topplings~$\gamma$ which is~$\tau$-legal for~$(\mathbf{1}_A,\,m_{\beta})$, the concatenation of~$\beta$ and~$\gamma$ yields a~$\tau$-legal sequence of topplings for~$(\eta,\,0)$, whence
\begin{align*}
\PlA\Big(\,\forall\,\alpha\text{ legal for }(\eta,\,0)\,,\  \norme{m_\alpha}_A<M\,\Big)
&\ \leqslant\ \PlA\Big(\,\forall\,\gamma\text{ legal for }(\mathbf{1}_A,\,m_{\beta})\,,\ \norme{m_{\beta}+m_\gamma}_A\,<\,M\ \Big|\ \calF\,\Big)\\
&\ \leqslant\ \PlA\Big(\,\forall\,\gamma\text{ legal for }(\mathbf{1}_A,\,m_{\beta})\,,\ \norme{m_\gamma}_A\,<\,M\ \Big|\ \calF\,\Big)\\
&\ =\ \PlA\Big(\,\forall\,\gamma\text{ legal for }(\mathbf{1}_A,\,0)\,,\ \norme{m_\gamma}_A\,<\,M\,\Big)\,.
\end{align*}
because, conditioned on~$\calF$, the remaining toppling instructions in~$\tau$ are still distributed according to~$\PlA$ and are independent of the instructions revealed to construct~$\beta$ (see Proposition~4 in~\cite{LS21} for a formal statement of this strong Markov property).
Combining this with equation~(\ref{ProbaForallLegal}), we obtain the claimed result.
\end{proof}

Therefore, we proved that the configuration~$\mathbf{1}_A$ is the worst initial configuration for the lower bound we are looking for.
Thus, in the sequel we forget about this initial configuration~$\eta$, and we reason as if we started with this new initial configuration~$\mathbf{1}_A$.

\subsection{Ordering the sleeping sites}
\label{sectionOrder}

We now start from this configuration~$\mathbf{1}_A$ with exactly one active particle on each site of~$A$, and we try to construct a random legal sequence of topplings~$\alpha$ such that~\smash{$\norme{m_\alpha}_A$} is exponentially large with overwhelming probability (under~$\PlA$).

Because particles are not allowed to fall asleep neither out of~$A$ nor on sites already occupied by another particle, we may consider a dynamics where, at each step, we choose a particle of~$A$, we topple it and, in case it chooses to move, we let it walk on~$\torus$ with successive topplings (in fact, we topple sites rather than particles) until it goes back to its starting point.
The sleeping particles met along this path are awaken by this particle.
Thus, the state space simplifies to~$\mathcal{P}(A)=\{S\subset A\}$, because, after each step (if we call step a loop from one site~$x\in A$ back to~$x$), each site of~$A$ is either occupied by a sleeping particle or by an active one.

To choose which particle we topple at each step, we define an ordering of the sites of~$A$.
The aim is that each site should not be too far apart from the preceding site.
We define recursively an ordering~$A=\{x_1,\,\ldots,\,x_k\}$.

First, we choose~$x_1\in A$ arbitrarily.
Then, if~$j\in\{1,\,\ldots,\,k-1\}$ is such that~$x_1,\,\ldots,\,x_j$ are constructed, we define the set~\smash{$A_j=A\setminus\{x_1,\,\ldots,\,x_j\}$} and we choose~$x_{j+1}\in A_j$ such that
$$d(x_j,\,x_{j+1})
\ =\ \min\limits_{x\in A_j}\,d(x_j,\,x)\,,$$
where~$d$ denotes the graph distance on~$\torus$.
In other words, at each step we choose among the remaining points the closest (or one of the closest) to the last point added.
For each~$j\in\{1,\,\ldots,\,k-1\}$, we write~\smash{$\ell_j=d(x_j,\,x_{j+1})$}.
Doing so, we obtain an ordering~$A=\{x_1,\,\ldots,\,x_k\}$ which is such that
\begin{equation}
\label{propertyOrdering}
\forall j\in\{1,\,\ldots,\,k-1\}\qquad
\ell_j\ =\ d(x_j,\,x_{j+1})\ =\ \min_{i>j}\,d(x_j,\,x_i)\,.
\end{equation}

\subsection{The toppling dynamic}
\label{sectionDynamic}

Using a toppling procedure based on the order on the sites of~$A$, we now prove a comparison between the Activated Random Walk model and a one-dimensional process.

\begin{lemma}
\label{lemmaDomStoch}
Consider~$A\subset\torus$, equipped with an order~$A=\{x_1,\,\ldots,\,x_k\}$ where~\smash{$k=|A|$}.
Let us consider a Markov chain with state space~$\{1,\,\ldots,\,k+1\}$ and transition probabilities given by
$$\left\{\begin{aligned}
&\ \forall j\in\{1,\,\ldots,\,k\}\qquad
p(j,\,j+1)\ =\ \frac{\lambda}{1+\lambda}\,,\\
&\ p(1,\,1)\ =\ \frac{1}{1+\lambda}\,,\qquad
p(k+1,\,k+1)\ =\ 1\,,\\
&\ \forall j\in\{2,\,\ldots,\,k\}\qquad
p(j,\,j-1)\ =\ \frac{1}{1+\lambda}P_{x_{j}}\big(T_{x_{j-1}}<T_{x_{j}}^+\big)\,,\\
&\ \forall j\in\{2,\,\ldots,\,k\}\qquad
p(j,\,j)\ =\ \frac{1}{1+\lambda}P_{x_{j}}\big(T_{x_{j}}^+<T_{x_{j-1}}\big)\,,
\end{aligned}\right.$$
where~$P_{x_{j}}$ denotes the probability with respect to the symmetric random walk on the torus~$\torus$ started from~$x_{j}$,~$T_{x_{j-1}}$ is the first hitting time of~$x_{j-1}$ and~$T_{x_{j}}^+$ is the first time the random walk comes back to its starting point~$x_{j}$.
We write~$\PCM_j$ for the probability measure of this Markov chain started from~$j$, and we denote by~$T_{k+1}$ its first hitting time of~$k+1$.
Then, the law of~\smash{$\big\|m^\tau_{\mathbb{1}_A}\big\|_A$} under~$\PlA$ stochastically dominates the law of~$T_{k+1}$ under~$\PCM_1$.
\end{lemma}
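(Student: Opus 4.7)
The plan is to use the Abelian property (Lemma~\ref{lemmaAbelian}) to construct, for a.e.\ realization of $\tau\sim\PlA$, a specific random legal toppling sequence $\alpha$ starting from $\mathbf{1}_A$, together with a coupled realization of the Markov chain, in such a way that $\|m_\alpha\|_A\geq T_{k+1}$ almost surely under the coupling. Because any legal toppling sequence satisfies $m_\alpha\leq m^\tau_{\mathbf{1}_A}$ pointwise by the definition~(\ref{defOdometer}) of the odometer as a supremum, this will yield the claimed stochastic domination.

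The sequence $\alpha$ is built in successive rounds indexed by $t$. We maintain a state $J_t\in\{1,\ldots,k+1\}$ initialized at $J_0=1$, together with the invariant that, at the start of round $t$, the current configuration carries a sleeping particle at each of $x_1,\ldots,x_{J_t-1}$ and exactly one active particle at each of $x_{J_t},\ldots,x_k$. If $J_t=k+1$ the procedure terminates. Otherwise, round $t$ begins with one legal toppling of $x_{J_t}$, contributing one $A$-toppling. With probability $\lambda/(1+\lambda)$ the next instruction at $x_{J_t}$ is a sleep, and since $\eta(x_{J_t})=1$ by the invariant the particle falls asleep; we set $J_{t+1}=J_t+1$ and the invariant is immediately preserved. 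With probability $1/(1+\lambda)$ the instruction is a jump: the particle becomes a walker at a uniform neighbor of $x_{J_t}$, and we iteratively topple the walker's current position until it first visits one of the two targets $x_{J_t-1}$ or $x_{J_t}$ -- setting $J_{t+1}=J_t-1$ in the former case and $J_{t+1}=J_t$ in the latter.

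The core coupling observation is that, during the walker phase, the walker's positions trace a genuine simple random walk on $\torus$ issued from $x_{J_t}$: any intermediate site visited by the walker either lies outside $A$ (where, by the definition of $\PlA$, the next instruction is necessarily a jump with uniform direction) or lies in $A$ and already carries a particle, so the walker's arrival pushes the local count to at least two and any sleep instruction is ineffective. Conditionally on the occurrence of a jump, the walker therefore reaches $x_{J_t-1}$ before returning to $x_{J_t}$ with probability $P_{x_{J_t}}(T_{x_{J_t-1}}<T_{x_{J_t}}^+)$, which, together with the sleep-vs-jump split at $x_{J_t}$ and the strong Markov property of the instruction field $\tau$ under $\PlA$, shows that $(J_t)_{t\geq 0}$ has exactly the transitions of the Markov chain of the statement. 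Hence the number of rounds until $J_t$ first equals $k+1$ has the distribution of $T_{k+1}$ under $\PCM_1$, and since each round contributes at least one $A$-toppling at $x_{J_t}$, this gives $\|m_\alpha\|_A\geq T_{k+1}$.

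The principal technical obstacle is the re-establishment of the invariant after a $J_t\to J_t-1$ transition, at which moment $x_{J_t-1}$ carries two particles, $x_{J_t}$ carries none, and some sites $x_i$ with $i<J_t-1$ that the walker transited through may have been woken up. Between this round and the next we insert additional legal topplings -- using fresh instructions in $\tau$ together with an IDLA-type argument to redirect one active particle from $x_{J_t-1}$ back onto $x_{J_t}$ and to resend any other woken-up particles into sleep -- invoking Lemma~\ref{lemmaMonotonicity} and the strong Markov property of $\PlA$ to guarantee that this cleanup terminates almost surely and does not interfere with the transitions of the Markov chain in subsequent rounds. These extra topplings can only increase $\|m_\alpha\|_A$, so the bound $\|m_\alpha\|_A\geq T_{k+1}$ is preserved, concluding the argument.
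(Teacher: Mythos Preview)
Your overall strategy is correct and matches the paper's: build a legal sequence $\alpha$ round by round, couple the round index $J_t$ to the stated Markov chain, and use $\|m_\alpha\|_A\leq\|m^\tau_{\mathbf{1}_A}\|_A$. The coupling of the jump excursion to a simple random walk under $\PlA$ is also right. But you introduce two unnecessary complications that then force the hand-wavy ``cleanup'' paragraph, and that paragraph is where the argument breaks.

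First, your invariant is too strong. You require $x_1,\ldots,x_{J_t-1}$ to be \emph{sleeping}. The paper only requires one particle on each site of $A$ with those at $x_{J_t},\ldots,x_k$ active; the state of the lower-indexed particles is irrelevant. With the weaker invariant there is nothing to ``resend into sleep''. Second, you stop the walker the moment it hits $x_{J_t-1}$, leaving two particles there and none at $x_{J_t}$. The paper instead lets the walker complete the full excursion back to its starting point $x_{J_t}$; this automatically preserves ``one particle per site of $A$'' and the transition probabilities are unchanged, since $P_{x_{J_t}}(T_{x_{J_t-1}}<T_{x_{J_t}}^+)$ is exactly the probability that the excursion visits $x_{J_t-1}$ at some point. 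With these two changes no cleanup is needed at all.

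As written, your cleanup is a genuine gap. ``Resend any other woken-up particles into sleep'' cannot be achieved by a bounded number of legal topplings: toppling an isolated active particle at $x_i$ produces a jump with probability $1/(1+\lambda)$, after which you must chase it, potentially waking further particles and cascading. This can be made to terminate a.s., but not via an ``IDLA-type argument'' (IDLA spreads particles to empty sites; it does not put them to sleep), and Lemma~\ref{lemmaMonotonicity} concerns comparison of odometers between two initial configurations and says nothing about forcing sleep. So either drop the sleeping requirement below $J_t$, or---cleanest---let the walker return to $x_{J_t}$ and delete the cleanup paragraph entirely.
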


\begin{proof}
Let~$A=\{x_1,\,\ldots,\,x_k\}\subset\torus$, with~$k=|A|$.
Let us consider a random field of toppling instructions~$\tau$ distributed according to~$\PlA$.
We now specify a procedure which will almost surely yield a sequence of topplings~$\alpha$ legal for~$(\mathbf{1}_A,\,0)$, with a dynamics coupled with the aforementioned Markov chain in a way such that~$\norme{m_\alpha}_A\geqslant T_{k+1}$.

This procedure is defined as follows.
Remember that we start with exactly one active particle on each site of~$A$.
Our toppling procedure consists of a certain number of steps, indexed by~$t\in\N$, with a counter~$J(t)$ which indicates which site of~$A$ we have to topple at each step.

We start with~$J(0)=1$, and the procedure will terminate whenever we reach~$J(t)=k+1$.
We will ensure that, before each step~$t$, there is exactly one particle on each site of~$A$ and all the particles located on the sites~$x_i$ with~$i\geqslant J(t)$ are active (while the particles on the sites~$x_i$ with~$i< J(t)$ may be active or sleeping).

At each time step~$t$, if~$J(t)\leqslant k$, we topple the particle at site~$x_{J(t)}$ (see figure~\ref{figTopplingProcedure}).
With probability~$\lambda/(1+\lambda)$, this particle falls asleep before walking.
In this case, we set~$J(t+1)=J(t)+1$, and we proceed with the next step.
It remains true that, at time~$t+1$, all the particles on the sites~$x_i$ with~$i\geqslant J(t+1)$ are active.

Otherwise, with probability~$1/(1+\lambda)$, the particle at site~$x_{J(t)}$ chooses to move to one of the neighbouring sites on the torus.
We then let it walk with successive topplings until it comes back to its starting point~$x_{J(t)}$ (this is legal because it is not allowed to fall asleep outside of~$A$ and all the other sites of~$A$ are already occupied by other particles).
This loop almost surely terminates in a finite number of topplings, and then we leave the particle awake at site~$x_{J(t)}$.
If~$J(t)\geqslant 2$, along this path, with probability~\smash{$P_{x_{J(t)}}\big(T_{x_{J(t)-1}}<T_{x_{J(t)}}^+\big)$}, this active particle meets the particle at site~$x_{J(t)-1}$, which becomes active if it was sleeping (it may be sleeping or active before, but at least after this visit we know that it is active).
Thus, we set~$J(t+1)=J(t)-1$, meaning that, at the next step, we will topple the particle at site~$x_{J(t)-1}$.
This operation also ensures that at time~$t+1$, there are active particles on the sites~$x_i$ for~$i\geqslant J(t+1)$.

Note that the path may visit other sites~$x_i$ with~$i\notin\{J(t),\,J(t)-1\}$, but this does not matter, since the particles on the sites~$i> J(t)$ are already awake and we do not care about the state of the particles on the sites~$i<J(t)-1$.
As represented on figure~\ref{figTopplingProcedure}, some of the latter particles may be active and some may be sleeping.

During this procedure, the process~$J(t)$ evolves exactly like the Markov chain described above (at least almost surely, because the procedure can also fail with probability~$0$ if at some step the random walk on the torus never comes back to its starting point).
Thus, the number~$T$ of steps before the procedure stops is distributed as the reaching time~$T_{k+1}$ under~$\PCM_1$.
What's more, at each step of the procedure, we topple at least one site of~$A$.
Therefore, the procedure almost surely yields a finite sequence of topplings~$\alpha$ which is legal for~$(\mathbf{1}_A,\,0)$ and which is such that~\smash{$\big\|m_\alpha\big\|_A\geqslant T$}, which implies that~\smash{$\big\|m_{\mathbf{1}_A}^\tau\big\|_A\geqslant T$}.
Therefore, we constructed a coupling between~$\PlA$ and~$\PCM_1$ which is such that~\smash{$\big\|m_{\mathbf{1}_A}^\tau\big\|_A\geqslant T_{k+1}$} almost surely, which proves the claimed stochastic domination.
\end{proof}

\subsection{An estimate about random walks on the torus}
\label{sectionRWtorus}

We now prove the following geometric estimate about random walks on the torus:

\begin{lemma}
\label{lemmaRWtorus}
Denoting by~$P_x$ the probability measure relative to the symmetric random walk on the torus~$\torus=(\Z/n\Z)^d$ started at~$x$, writing~$T_y$ for its first hitting time of~$y$ and~$T_x^+$ its first return time to~$x$ and~$d$ for the graph distance on~$\torus$, we have
$$\forall\,n\geqslant 1\quad
\forall\,x,\,y\in\torus\qquad
x\neq y\quadimplique
P_x\big(T_y<T_x^+\big)\ \geqslant\ \frac{1}{(2d)\,d(x,\,y)}\,.$$
\end{lemma}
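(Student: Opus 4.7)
The plan is to translate the estimate into a question about effective resistance and then bound the resistance by exhibiting an explicit path. Recall the classical identity from electric-network theory for simple random walk on a finite connected graph: for $x\neq y$,
$$P_x\big(T_y<T_x^+\big)\ =\ \frac{1}{\deg(x)\cdot R_{\mathrm{eff}}(x,y)},$$
where $R_{\mathrm{eff}}(x,y)$ is the effective resistance between $x$ and $y$ in the network obtained by placing a unit resistor on each edge. Since every vertex of $\torus$ has degree $2d$, the statement reduces to showing that
$$R_{\mathrm{eff}}(x,y)\ \leqslant\ d(x,y).$$

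For this bound I would invoke Rayleigh's monotonicity principle: deleting edges (i.e.\ setting conductances to zero) can only increase the effective resistance. Fix any geodesic $x=z_0,z_1,\ldots,z_\ell=y$ in $\torus$ with $\ell=d(x,y)$, and consider the subnetwork obtained by keeping only the $\ell$ edges of this geodesic. This is a series of $\ell$ unit resistors between $x$ and $y$, with total effective resistance exactly $\ell$. By monotonicity, the effective resistance in the full torus satisfies $R_{\mathrm{eff}}(x,y)\leqslant\ell=d(x,y)$.

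Plugging this into the identity gives
$$P_x\big(T_y<T_x^+\big)\ =\ \frac{1}{2d\cdot R_{\mathrm{eff}}(x,y)}\ \geqslant\ \frac{1}{2d\cdot d(x,y)},$$
which is exactly the inequality claimed. There is really no hard step here: both ingredients (the commute/return-time formula and Rayleigh monotonicity) are standard facts about reversible Markov chains, and once the problem is phrased in network language, the geodesic path furnishes the series-resistor comparison essentially for free. The only thing worth being a little careful about is to cite (or give a one-line proof of) the hitting-probability identity in the exact form needed, and to note that the bound is uniform in $n$, depending only on the graph distance between $x$ and $y$.
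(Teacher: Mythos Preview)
Your proof is correct and follows essentially the same approach as the paper: both use the identity relating $P_x(T_y<T_x^+)$ to effective resistance, then apply Rayleigh's monotonicity principle by deleting all edges except those on a geodesic from $x$ to $y$. The only cosmetic difference is normalization---the paper assigns resistance $2d$ to each edge so that $R(x,y)=1/P_x(T_y<T_x^+)$ directly, whereas you use unit resistors and the formula with the degree factor; the two are of course equivalent.
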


\begin{proof}
The result is a straightforward consequence of the theory connecting random walks and electric networks (see for example~\cite{DS84} or~\cite{LP16}).
Indeed, the above probability is related to the effective resistance~$R(x,\,y)$ between~$x$ and~$y$ on the torus where each edge has a resistance equal to~$2d$, through the relation
$$R(x,\,y)\ =\ \frac{1}{P_x\big(T_y<T_x^+\big)}\,.$$
According to Rayleigh's monotonicity law, this resistance can only increase if we remove edges from the graph.
Thus, removing all the edges except the ones belonging to one of the shortest paths from~$x$ to~$y$, we obtain that
$$R(x,\,y)\ \leqslant\ 2d\,d(x,\,y)\,,$$
whence the claimed inequality.
\end{proof}

\subsection{A geometric estimate}
\label{sectionSommeLog}

We now prove the following property of the ordering that we defined in paragraph~\ref{sectionOrder}:

\begin{lemma}
\label{lemmaSommeLog}
There exists a universal constant~$\kappa_d>0$, which depends only on the dimension~$d$, such that, for all~$n\geqslant 1$ and for every subset~$A\subset\torus$, the distances~$\ell_1,\,\ldots,\,\ell_{|A|-1}$ defined in section~\ref{sectionOrder} satisfy
$$\prod_{j=1}^{|A|-1}\ell_j
\ \leqslant\ (\kappa_d)^{n^d}\,.$$
\end{lemma}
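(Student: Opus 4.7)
The plan is to prove the equivalent additive form $\sum_{j=1}^{k-1}\ln\ell_j\leqslant C_d\,n^d$ for some constant depending only on $d$, which after exponentiation yields the claim with $\kappa_d=e^{C_d}$. The crucial input is a \emph{separation property} inherited directly from the greedy construction: if $i<j$, then $x_j$ belongs to $A_i=\{x_{i+1},\ldots,x_k\}$, so the minimality~(\ref{propertyOrdering}) in the choice of $x_{i+1}$ forces
$$d(x_i,x_j)\ \geqslant\ \ell_i\,.$$
In particular, the further a point $x_i$ is from its successor, the more empty space it has around it on the side of the not-yet-chosen points.

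To exploit this I decompose dyadically. For each integer $s\geqslant 1$ set $N_s=\bigl|\{j\in\{1,\ldots,k-1\}:\ell_j\geqslant 2^s\}\bigr|$. Since for every integer $\ell\geqslant 1$ one has $\ln\ell\leqslant(\ln 2)\bigl(1+\sum_{s\geqslant 1}\mathbf{1}_{\{\ell\geqslant 2^s\}}\bigr)$, summing over $j$ and using $k-1\leqslant n^d$ gives
$$\sum_{j=1}^{k-1}\ln\ell_j\ \leqslant\ (\ln 2)\,n^d+(\ln 2)\sum_{s\geqslant 1}N_s\,,$$
so the whole matter reduces to showing $\sum_{s\geqslant 1}N_s\leqslant C'_d\,n^d$.

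Here the separation property enters: for any two indices $i<j$ both counted by $N_s$ we have $d(x_i,x_j)\geqslant\ell_i\geqslant 2^s$, so the set $\{x_j:\ell_j\geqslant 2^s\}$ is $2^s$-separated in $\torus$. A volume-packing argument then bounds its cardinality. Provided $2^{s-1}\leqslant n/2$, the torus balls of radius $\rho_s=\lfloor 2^{s-1}\rfloor-1$ around these points are pairwise disjoint (by the triangle inequality in $\torus$, since $2\rho_s<2^s$), each has cardinality at least $c_d\,2^{sd}$ (the $\ell^1$-ball does not yet wrap around), and they all lie in $\torus$, which gives $N_s\leqslant c_d^{-1}\,n^d/2^{sd}$. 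Summing the geometric series over $s$ yields the desired bound $\sum_{s\geqslant 1}N_s\leqslant C'_d\,n^d$.

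The only delicate point, and the one I expect to require a little care in the write-up, is precisely this packing step in the torus rather than in $\Z^d$: wrap-around could in principle both shrink the small balls and prevent true $2^s$-separation for large $s$. Both issues are benign because, as soon as $2^s$ exceeds the diameter $d\lfloor n/2\rfloor$ of $\torus$, no two distinct points can be $2^s$-separated at all, so $N_s\leqslant 1$ and these tail terms contribute only an $O(\log n)$, hence $O(n^d)$, correction. The combinatorial heart of the argument is thus just the greedy choice of the ordering, turned via dyadic summation into a classical packing estimate.
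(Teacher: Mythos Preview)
Your proof is correct and follows essentially the same approach as the paper: the greedy separation property, a dyadic decomposition of the level sets $\{\ell_j\geqslant 2^s\}$, a ball-packing bound on each level set, and a geometric series. The paper's write-up differs only in cosmetic details (it uses Abel summation rather than your pointwise bound on $\ln\ell$, and it shrinks the packing radius by an extra factor $d$ so that the balls never wrap around for any $L\leqslant dn/2$, which avoids having to treat the range $n<2^s\leqslant dn/2$ separately).
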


\begin{proof}
Let~$A\subset\torus$.
We consider the order~$A=\{x_1,\,\ldots,\,x_k\}$ with~$k=|A|$ and the distances~$\ell_1,\,\ldots,\,\ell_{k-1}$, as defined in paragraph~\ref{sectionOrder}.
For every~$L\in\N\setminus\{0\}$, let us consider the set
$$\calA_L\ =\ \Big\{\,j\in\{1,\,\ldots,\,k-1\}\ :\ \ell_j\geqslant L\,\Big\}\,.$$
If~$L>dn/2$, then we have~$\calA_L=\varnothing$, because there is no pair of points in~$\torus$ separated by a distance~$L$.
Hence, we assume that~$L\leqslant dn/2$.
This set~$\calA_L$ has the property that any two different points with indices in~$\calA_L$ are always separated by a distance at least~$L$, because if~$i,\,j\in\calA_L$ with for example~$i>j$, then the property~(\ref{propertyOrdering}) of the ordering ensures that~$d(x_i,x_j)\geqslant \ell_j\geqslant L$.
Defining the open balls for the graph distance~$d$ on the torus~$\torus$ by
$$\forall\,x\in\torus\quad
\forall\,r\in\N\qquad
B_{\torus}(x,r)\ =\ \big\{\,y\in\torus\,:\,d(x,y)<r\,\big\}\,,$$
we know that the open balls~$B_{\torus}(x,\,\Ceil{L/2})$ for~$x\in\calA_L$ are disjoint, whence
\begin{equation}
\label{disjointBalls}
\abs{\calA_L}\ \leqslant\ \frac{\abs{\torus}}{\abs{B_{\torus}(0,\,\Ceil{L/2})}}
\ \leqslant\ \frac{\abs{\torus}}{\abs{B_{\torus}(0,\,\Ceil{L/(2d)})}}\,.
\end{equation}
Now note that, since~$L/(2d)\leqslant n/4$, we have
$$\abs{B_{\torus}\left(0,\,\Ceil{\frac{L}{2d}}\right)}
\ =\ \abs{B_1\left(0,\,\Ceil{\frac{L}{2d}}\right)}
\ \geqslant\ \abs{B_\infty\left(0,\,\Ceil{\frac{\Ceil{L/(2d)}}{d}}\right)}\,,$$
where~$B_1$ denotes the open ball on~$\Z^d$ for the graph distance on~$\Z^d$, which corresponds to the usual~$L_1$-norm~$\norme{\cdot}_1$, while~$B_\infty$ denotes the open ball on~$\Z^d$ for the supremum norm~$\norme{\cdot}_\infty$.
Thus, we can write
$$\abs{B_{\torus}\left(0,\,\Ceil{\frac{L}{2d}}\right)}
\ \geqslant\ \abs{B_\infty\left(0,\,\Ceil{\frac{L}{2d^2}}\right)}
\ =\ \left(2\Ceil{\frac{L}{2d^2}}-1\right)^d\\
\ \geqslant\ \left(\frac L {2d^2}\right)^d\,.$$
Plugging this into~(\ref{disjointBalls}), we obtain, for every~$L\leqslant dn/2$,
$$\abs{\calA_L}
\ \leqslant\ \left(\frac{2d^2n}{L}\right)^d\,.$$
The above is also true when~$L>dn/2$, since in this case we noted that~$\calA_L=\varnothing$.
Therefore, we may write
$$\sum_{j=1}^{k-1}\ln(\ell_j)
\ \leqslant\ \sum_{m=0}^{+\infty}\big(\abs{\calA_{2^m}}-\abs{\calA_{2^{m+1}}}\big)\ln(2^{m+1})
\ =\ \sum_{m=0}^{+\infty}\abs{\calA_{2^m}}\ln 2
\ \leqslant\ n^d\ln \kappa_d\,,$$
where the constant~$\kappa_d$ is given by
$$\kappa_d\ =\ 2\exp\Bigg(\,\big(2d^2\big)^d\sum_{m=0}^{+\infty}\frac{m+1}{2^{md}}\,\Bigg)
\ =\ 2\exp\Bigg(\,\frac{\big(2d^2\big)^d}{1-2^{-d}}\,\Bigg)\,,$$
which is indeed finite for any~$d\geqslant 1$.
\end{proof}

\subsection{Study of the absorption time of the one-dimensional Markov chain}
\label{sectionMC}

We now study the law of the reaching time~$T_{k+1}$ of the absorbing state~$k+1$ for the Markov chain that we defined in paragraph~\ref{sectionDynamic}, and we prove:

\begin{lemma}
\label{lemmaMC}
Let~$\lambda>0$,~$\mu\in(0,\,1)$ and~$A\subset\torus$ with~$|A|=\Ceil{\mu n^d}$.
We consider the order~$A=\{x_1,\,\ldots,\,x_k\}$ defined in paragraph~\ref{sectionOrder}, with~$k=|A|$, and we consider the Markov chain described in Lemma~\ref{lemmaDomStoch}, with law~$\PCM_1$.
Then, for any~$M\in\N$, the absorption time~$T_{k+1}$ of this Markov chain satisfies
$$\PCM_1\big(T_{k+1}\leqslant M\big)
\ \leqslant\ \frac{M}{1\wedge(2d\lambda)}\big(\kappa_d(2d\lambda)^\mu\big)^{n^d}\,.$$
\end{lemma}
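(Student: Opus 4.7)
The plan is to reduce the problem to a single-excursion computation for the underlying birth-death chain. First I would note that at every non-absorbed step the chain moves up with probability $p_j=\lambda/(1+\lambda)$, and at state $j\in\{2,\dots,k\}$ Lemma~\ref{lemmaRWtorus} yields the lower bound $\frac{1}{(1+\lambda)\cdot 2d\,\ell_{j-1}}$ on the downward probability $q_j$, where $\ell_{j-1}=d(x_{j-1},x_j)$. In particular, the ratio $q_j/p_j$ at state $j$ is at least $1/(2d\lambda\,\ell_{j-1})$.

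Next I would ignore the self-loops and work with the embedded jump chain, which is a genuine birth-death chain on $\{1,\dots,k+1\}$ with reflecting boundary at $1$ and absorbing boundary at $k+1$. By the strong Markov property at state~$1$, the successive \emph{excursions} (each beginning with the jump $1\to 2$ and ending either upon return to $1$ or upon absorption at $k+1$) are independent and share the same success probability
$$\pi\ =\ \PCM_2\bigl(T_{k+1}<T_1\bigr)\ =\ \frac{1}{\sum_{i=1}^{k}\gamma_i}\quadou \gamma_1=1\quadet\gamma_i=\prod_{l=2}^{i}\frac{q_l}{p_l}\text{ for }i\geqslant 2,$$
by the classical birth-death hitting formula. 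Retaining only the term $\gamma_k$ in the denominator and applying Lemma~\ref{lemmaSommeLog} then gives
$$\pi\ \leqslant\ \frac{1}{\gamma_k}\ \leqslant\ \prod_{l=1}^{k-1}(2d\lambda\,\ell_l)\ \leqslant\ (2d\lambda)^{k-1}\,\kappa_d^{n^d}.$$

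To conclude, I would apply a union bound: within $M$ time steps at most $M$ excursions can have been initiated (each starts with a distinct $1\to 2$ transition), so
$$\PCM_1\bigl(T_{k+1}\leqslant M\bigr)\ \leqslant\ M\pi\ \leqslant\ M(2d\lambda)^{k-1}\kappa_d^{n^d}.$$
Finally, $k=\Ceil{\mu n^d}$ gives $k-1<\mu n^d\leqslant k$, so distinguishing the cases $2d\lambda\leqslant 1$ and $2d\lambda\geqslant 1$ yields $(2d\lambda)^{k-1}\leqslant(2d\lambda)^{\mu n^d}/(1\wedge(2d\lambda))$, and the claimed inequality follows. The only delicate point is handling the reflecting boundary at state~$1$, cleanly dealt with by the excursion decomposition; the rest is a direct combination of the standard birth-death escape formula with the geometric inputs of Lemmas~\ref{lemmaRWtorus} and~\ref{lemmaSommeLog}.
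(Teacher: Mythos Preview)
Your argument is correct and reaches exactly the same intermediate bound $M(2d\lambda)^{k-1}\kappa_d^{n^d}$ as the paper, but by a genuinely different mechanism. The paper does not introduce excursions at all: instead it modifies the chain so that $k+1$ becomes reflecting (setting $p(k+1,k)=p(k,k+1)=\lambda/(1+\lambda)$), observes that this leaves $T_{k+1}$ unchanged, and then exploits reversibility with respect to the measure $\nu(j)=\prod_{i<j}p(i,i+1)/p(i+1,i)$ to write
\[
\PCM_1(T_{k+1}=t)\ =\ \nu(k+1)\,\PCM_{k+1}(J(t)=1)\ \leqslant\ \nu(k+1)\,,
\]
and sums over $t\leqslant M$. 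The two routes meet because your $1/\gamma_k$ equals the paper's $\nu(k+1)$: both are $\prod_{j=1}^{k-1}\lambda/P_{x_{j+1}}(T_{x_j}<T_{x_{j+1}}^+)$.

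What each approach buys: the paper's reversibility trick is shorter and avoids any discussion of the embedded chain or the boundary at $1$, at the price of an artificial modification of the transition at $k+1$. Your excursion decomposition is more probabilistic and in principle slightly sharper (you bound $\pi=1/\sum_i\gamma_i$ by $1/\gamma_k$, throwing away the other terms), though that gain is irrelevant here. Your handling of the self-loops is fine: they do not affect the ruin probability $\pi$, and the bound ``at most $M$ excursions in $M$ steps'' only uses that each excursion consumes at least the single $1\to 2$ step in the original time scale. The final case split on $2d\lambda\lessgtr 1$ to pass from $(2d\lambda)^{k-1}$ to $(2d\lambda)^{\mu n^d}/(1\wedge(2d\lambda))$ is identical to the paper's.
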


\begin{proof}
Let~$\lambda>0$,~$\mu\in(0,\,1)$,~$A=\{x_1,\,\ldots,\,x_k\}\subset\torus$ with~$|A|=k=\Ceil{\mu n^d}$, as constructed in paragraph~\ref{sectionOrder}, and let~$M\in\N$.
We can modify the Markov chain defined in Lemma~\ref{lemmaDomStoch} by letting
$$p(k+1,\,k)
\ =\ p(k,\,k+1)
\ =\ \frac{\lambda}{1+\lambda}
\qquadet
p(k+1,\,k+1)\ =\ \frac{1}{1+\lambda}\,,$$
since this does not change the reaching time of~$k+1$, which we are interested in.
With this modification, the Markov chain becomes reversible with respect to the measure~$\nu$ defined on~$\{1,\,\ldots,\,k+1\}$ by
$$\forall j\in\{1,\,\ldots,\,k+1\}\qquad
\nu(j)\ =\ \prod_{1\leqslant i<j}\frac{p(i,\,i+1)}{p(i+1,\,i)}\,,$$
with the convention that~$\nu(1)=1$.
This reversibility allows us to write
\begin{align*}
\PCM_1(T_{k+1}\leqslant M)
&\ =\ \sum_{t=1}^M\PCM_1(T_{k+1}=t)
\ =\ \sum_{t=1}^M\ \sum_{1=j_0,\,j_1,\,\ldots,\,j_t=k+1}\ \prod_{i=1}^t p(j_{i-1},\,j_i)\\
&\ =\ \sum_{t=1}^M\,\sum_{1=j_0,\,j_1,\,\ldots,\,j_{t}=k+1}\ \prod_{i=1}^{t} \frac{\nu(j_i)}{\nu(j_{i-1})}\,p(j_i,\,j_{i-1})
\ =\ \sum_{t=1}^M\,\nu(k+1)\ \PCM_{k+1}\big(J(t)=1\big)\\
&\ \leqslant\ \sum_{t=1}^M\,\nu(k+1)
\ =\ M\,\nu(k+1)\,.\numberthis\label{calculRev}
\end{align*}
We now study~$\nu(k+1)$ to show that it is exponentially small.
We have
$$\nu(k+1)\ =\ \prod_{j=1}^{k}\frac{p(j,\,j+1)}{p(j+1,\,j)}
\ =\ \prod_{j=1}^{k-1}\frac{p(j,\,j+1)}{p(j+1,\,j)}
\ =\ \lambda^{k-1}\,\prod_{j=1}^{k-1} \frac 1 {P_{x_{j+1}}\big(T_{x_j}<T_{x_{j+1}}^+\big)}\,.$$
Using the results of Lemmas~\ref{lemmaRWtorus} and~\ref{lemmaSommeLog}, this becomes
$$\nu(k+1)
\ \leqslant\ (2d\lambda)^{k-1}\,\prod_{j=1}^{k-1}\ell_j
\ \leqslant\ (2d\lambda)^{k-1} (\kappa_d)^{n^d}\,.$$
Plugging this into~(\ref{calculRev}), we obtain
$$\PCM_1(T_{k+1}\leqslant M)
\ \leqslant\ M(2d\lambda)^{k-1} (\kappa_d)^{n^d}\,.$$
Recalling that~$k=\Ceil{\mu n^d}$, we have
$$(2d\lambda)^{k-1}
\ \leqslant\ \frac{(2d\lambda)^{\mu n^d}}{1\wedge(2d\lambda)}\,,$$
whence the announced result.
\end{proof}

\subsection{Exponential number of topplings for a deterministic initial configuration}
\label{sectionDeterministicEta}

We now show how to combine the above results to prove that, still on the torus~$\torus$, starting with a deterministic initial configuration with a prescribed density of active particles, the number of topplings necessary to reach a stable configuration is exponentially large with big probability, as stated in the following Lemma:

\begin{lemma}
\label{lemmaExpTopplingsDeter}
In any dimension~$d\geqslant 1$, for every~$\mu\in(0,1)$ and~$\lambda>0$ satisfying the condition~(\ref{conditionMuLambda}), there exists~$c>0$ such that, for~$n$ large enough, we have
$$\sup_{\eta\in\N^{\torus}\,:\,\abs{\eta}\geqslant\mu n^d}
\ \Pl\Big(\,\norme{m_\eta^\tau}\,<\,e^{cn^d}\,\Big)
\ <\ e^{-cn^d}\,.$$
\end{lemma}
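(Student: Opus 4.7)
The plan is to enumerate the possible final configurations~$\sigma^\tau_\eta$, apply the chain of comparison lemmas established in the previous subsections to each term, and then balance the resulting combinatorial factor against the exponential suppression provided by hypothesis~(\ref{conditionMuLambda}).

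First, using the monotonicity property (Lemma~\ref{lemmaMonotonicity}), I would reduce to the case where~$\abs{\eta}$ is exactly~$k:=\Ceil{\mu n^d}$: any~$\eta$ with~$\abs{\eta}\geqslant\mu n^d$ dominates some~$\eta'$ with~$\abs{\eta'}=k$ and hence~$m^\tau_\eta\geqslant m^\tau_{\eta'}$ pointwise, so the event~$\{\norme{m^\tau_\eta}<e^{cn^d}\}$ is included in~$\{\norme{m^\tau_{\eta'}}<e^{cn^d}\}$ and I may assume~$\abs{\eta}=k$. Since the torus is finite and the total number of particles is conserved by legal topplings, the final configuration~$\sigma^\tau_\eta$ must be of the form~$\s\mathbf{1}_A$ with~$\abs{A}=k$. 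Summing over the possible sets~$A$ and using~$\norme{m^\tau_\eta}_A\leqslant\norme{m^\tau_\eta}$, I obtain
$$\Pl\bigl(\norme{m^\tau_\eta}<e^{cn^d}\bigr)\ \leqslant\ \sum_{A\subset\torus,\,\abs{A}=k}\Pl\Bigl(\bigl\{\norme{m^\tau_\eta}_A<e^{cn^d}\bigr\}\cap\bigl\{\sigma^\tau_\eta=\s\mathbf{1}_A\bigr\}\Bigr).$$

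Next, for each fixed~$A$ (equipped with the ordering of section~\ref{sectionOrder}), I would chain Lemma~\ref{lemmaSleepless} (to switch to the sleepless measure~$\PlA$), Lemma~\ref{lemmeWalkToA} (to replace~$\eta$ by the worst-case configuration~$\mathbf{1}_A$), Lemma~\ref{lemmaDomStoch} (stochastic domination by the one-dimensional Markov chain hitting time~$T_{k+1}$), and finally Lemma~\ref{lemmaMC}. Each summand is then bounded by
$$\frac{e^{cn^d}}{1\wedge(2d\lambda)}\,\bigl(\kappa_d(2d\lambda)^\mu\bigr)^{n^d}.$$
Multiplying by~$\binom{n^d}{k}$ and invoking the standard entropy bound~$\binom{n^d}{k}\leqslant\bigl(\mu^\mu(1-\mu)^{1-\mu}\bigr)^{-n^d}\cdot\mathrm{poly}(n)$ (valid since~$k=\Ceil{\mu n^d}$ differs from~$\mu n^d$ by at most~$1$ and the map~$p\mapsto p^p(1-p)^{1-p}$ is continuous at~$\mu$), I arrive at
$$\Pl\bigl(\norme{m^\tau_\eta}<e^{cn^d}\bigr)\ \leqslant\ \mathrm{poly}(n)\cdot e^{cn^d}\left(\frac{\kappa_d(2d\lambda)^\mu}{\mu^\mu(1-\mu)^{1-\mu}}\right)^{n^d}.$$

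Hypothesis~(\ref{conditionMuLambda}) asserts precisely that the base~$\rho$ of the last exponential is strictly less than~$1$, so for any~$c>0$ with~$e^{2c}\rho<1$ the polynomial prefactor is absorbed and the right-hand side becomes at most~$e^{-cn^d}$ for all sufficiently large~$n$, uniformly in~$\eta$. The crux of the argument is really the matching between the combinatorial entropy and the geometric/random-walk exponent, which is already built into the statement of condition~(\ref{conditionMuLambda}); the only technical point that deserves care is the small discrepancy between~$\Ceil{\mu n^d}$ and~$\mu n^d$ in the binomial estimate, but this produces only a subexponential correction that is comfortably swallowed by the strict inequality~$\rho<1$. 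All other steps are immediate applications of the lemmas already proved.
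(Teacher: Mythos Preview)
Your proposal is correct and follows essentially the same route as the paper: reduce to $|\eta|=k=\Ceil{\mu n^d}$ via monotonicity, decompose over the possible final sets~$A$, chain Lemmas~\ref{lemmaSleepless}--\ref{lemmaMC}, and absorb the binomial factor $\binom{n^d}{k}$ using the entropy bound (the paper spells this out via Stirling) by choosing~$c$ so that $e^{2c}\rho<1$. The only cosmetic difference is that the paper applies monotonicity at the end rather than the beginning, but the argument is otherwise identical.
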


\begin{proof}
Let~$\lambda\in(0,\,\infty)$ and~$\mu\in(0,\,1)$ which satisfy the condition~(\ref{conditionMuLambda}).
This condition being a strict inequality, we may take~$c>0$ such that
\begin{equation}
\label{conditionWithC}
\frac{\kappa_d(2d\lambda)^\mu}{\mu^\mu (1-\mu)^{1-\mu}}
\ <\ e^{-2c}\,.
\end{equation}
Let us start with a configuration~\smash{$\eta\in\N^{\torus}$} such that~\smash{$|\eta|= k = \Ceil{\mu n^d}$}.
Let us recall that~$\|m_\eta^\tau\|$ denotes the total number of topplings necessary to stabilize the configuration~$\eta$ using the toppling instructions in~$\tau$.
For any~$M\in\N$, using Lemmas~\ref{lemmaSleepless},~\ref{lemmeWalkToA},~\ref{lemmaDomStoch} and~\ref{lemmaMC}, we can write
\begin{align*}
\Pl\Big(\,\norme{m_\eta^\tau}\leqslant M\,\Big)
&\ \stackrel{\phantom{\text{(Lemma~\ref{lemmaSleepless})}}}{=}\ \sum_{|A|=k}\Pl\Big(\,\big\{\norme{m_\eta^\tau}\leqslant M\big\}\cap\big\{\sigma^\tau_\eta=\s\mathbf{1}_A\big\}\,\Big)\\
&\ \stackrel{\phantom{\text{(Lemma~\ref{lemmaSleepless})}}}{\leqslant}\ \sum_{|A|=k}\Pl\Big(\,\big\{\norme{m^\tau_{\eta}}_A\leqslant M\big\}\cap\big\{\sigma^\tau_\eta=\s\mathbf{1}_A\big\}\,\Big)\\
&\ \stackrel{\text{(Lemma~\ref{lemmaSleepless})}}{\leqslant}\ \sum_{|A|=k}\PlA\Big(\,\norme{m^\tau_{\eta}}_A\leqslant M\,\Big)\\
&\ \stackrel{\text{(Lemma~\ref{lemmeWalkToA})}}{\leqslant}\ \sum_{|A|=k}\PlA\Big(\,\norme{m_{\mathbf{1}_A}^\tau}_A\leqslant M\,\Big)\\
&\ \stackrel{\text{(Lemma~\ref{lemmaDomStoch})}}{\leqslant}\ \sum_{|A|=k} \PCM_1\big(T_{k+1}\leqslant M\big)\\
&\ \stackrel{\text{(Lemma~\ref{lemmaMC})}}{\leqslant}\ \binom{n^d}{\Ceil{\mu n^d}} \frac{M}{1\wedge(2d\lambda)} \big(\kappa_d(2d\lambda)^\mu\big)^{n^d}\,.\numberthis\label{combiningLemmas}
\end{align*}
Now note that Stirling's formula ensures that, when~$m\to+\infty$,
\begin{align*}
\binom{m}{\Ceil{\mu m}}
&\ =\ \frac{m!}{\Ceil{\mu m}!(m-\Ceil{\mu m})!}
\ \stackrel{m\to\infty}{\sim}\ \frac{m^m\sqrt{2\pi m}}{\Ceil{\mu m}^{\Ceil{\mu m}}\sqrt{2\pi\mu m}(m-\Ceil{\mu m})^{m-\Ceil{\mu m}}\sqrt{2\pi(1-\mu)m}}\\
&\ =\ O\left(\frac{m^m}{(\mu m)^{\mu m}(m-\mu m)^{m-\mu m}\sqrt{m}}\right)
\ =\ O\left(\frac{1}{\mu^{\mu m}(1-\mu)^{(1-\mu)m}\sqrt{m}}\right)\,.
\end{align*}
Thus, there exists~$C=C(\mu)<\infty$ such that, for all~$m\geqslant 1$,
$$\binom{m}{\Ceil{\mu m}}
\ \leqslant\ \frac{C}{\mu^{\mu m}(1-\mu)^{(1-\mu)m}\sqrt{m}}\,.$$
Plugging this into~(\ref{combiningLemmas}) and taking~\smash{$M=\big\lfloor e^{cn^d}\big\rfloor$}, we obtain
$$\Pl\Big(\,\norme{m_\eta^\tau}<e^{cn^d}\,\Big)
\ \leqslant\ \frac{C}{\big(1\wedge(2d\lambda)\big)n^{d/2}}\Bigg[\,\frac{\kappa_d(2d\lambda)^\mu e^{c}}{\mu^\mu(1-\mu)^{1-\mu}}\,\Bigg]^{n^d}\,.$$
With our assumption~(\ref{conditionWithC}) on the constant~$c$, this becomes
$$\Pl\Big(\,\norme{m_\eta^\tau}<e^{cn^d}\,\Big)
\ \leqslant\ \frac{C}{\big(1\wedge(2d\lambda)\big)n^{d/2}}\,e^{-cn^d}\,.$$
Since this constant~$c$ does not depend on the initial configuration~$\eta$, we deduce that, for~$n$ large enough,
$$\sup_{\eta\in\N^{\torus}\,:\,\abs{\eta}=\Ceil{\mu n^d}}
\ \Pl\Big(\,\norme{m_\eta^\tau}\,<\,e^{cn^d}\,\Big)
\ <\ e^{-cn^d}\,.$$
By virtue of the monotonicity property stated in Lemma~\ref{lemmaMonotonicity}, this estimate remains true if the supremum is taken over all the configurations~$\eta$ with~$|\eta|\geqslant\mu n^d$.
\end{proof}

\subsection{Exponential number of topplings with a Poisson initial distribution}
\label{sectionSlowPhaseTopplings}

The result of Lemma~\ref{lemmaExpTopplingsDeter} easily extends to an initial configuration with Poisson i.i.d.\ numbers of active particles on each site of the torus, that is to say, distributed according to~$\Pm$.

\begin{lemma}
\label{lemmaSlowPhaseTopplings}
In any dimension~$d\geqslant 1$, for every~$\mu\in(0,1)$ and~$\lambda>0$ satisfying the condition~(\ref{conditionMuLambda}), there exists~$c>0$ such that, for~$n$ large enough, we have
$$\Pml\Big(\,\norme{m_{\eta_0}^\tau}\,<\,e^{cn^d}\,\Big)
\ <\ e^{-cn^d}\,.$$
\end{lemma}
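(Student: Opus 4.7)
The plan is to reduce the Poisson case to the deterministic case handled in Lemma~\ref{lemmaExpTopplingsDeter} by combining monotonicity with a Poisson large-deviations estimate for the total number of particles. Since condition~(\ref{conditionMuLambda}) is a strict inequality and its left/right-hand sides depend continuously on $\mu$, we may choose $\mu' \in (0,\mu)$ sufficiently close to $\mu$ so that $\lambda$ and $\mu'$ still satisfy~(\ref{conditionMuLambda}). Lemma~\ref{lemmaExpTopplingsDeter} then provides a constant $c_1 > 0$ such that, for $n$ large enough,
\begin{equation*}
\sup_{\eta \in \N^{\torus}\,:\,|\eta| \geq \mu' n^d}\ \Pl\Big(\,\|m_\eta^\tau\| < e^{c_1 n^d}\,\Big)\ <\ e^{-c_1 n^d}.
\end{equation*}

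Next, I would control the probability that the Poisson initial configuration has too few particles. Under $\Pm$, the total number of particles $|\eta_0|$ is a sum of $n^d$ independent Poisson random variables of mean $\mu$, hence Poisson with parameter $\mu n^d$. Standard large-deviations bounds for the Poisson distribution (Cramér's theorem, or a direct Chernoff estimate) yield a constant $c_2 = c_2(\mu,\mu') > 0$ with
\begin{equation*}
\Pm\Big(\,|\eta_0| < \mu' n^d\,\Big)\ \leq\ e^{-c_2 n^d}
\end{equation*}
for all $n$ large enough, since $\mu' < \mu$.

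To conclude, I would split according to whether $|\eta_0| \geq \mu' n^d$ and use the fact that $\eta_0$ and $\tau$ are independent under $\Pml = \Pm \otimes \Pl$:
\begin{align*}
\Pml\Big(\|m_{\eta_0}^\tau\| < e^{cn^d}\Big)
&\ \leq\ \Pm\Big(|\eta_0| < \mu' n^d\Big) + \Pml\Big(\|m_{\eta_0}^\tau\| < e^{cn^d},\ |\eta_0| \geq \mu' n^d\Big) \\
&\ \leq\ e^{-c_2 n^d} + \sup_{\eta\,:\,|\eta| \geq \mu' n^d} \Pl\Big(\|m_\eta^\tau\| < e^{cn^d}\Big),
\end{align*}
where the conditional probability is bounded using independence and the monotonicity property of Lemma~\ref{lemmaMonotonicity} (adding particles only increases the odometer). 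Taking $c = \min(c_1, c_2)/2$ and choosing $n$ large enough so each term is at most $\frac{1}{2}e^{-cn^d}$ yields the claim.

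There is no real obstacle here: the argument is a routine combination of monotonicity, independence of $\eta_0$ and $\tau$, a Poisson tail bound, and Lemma~\ref{lemmaExpTopplingsDeter}. The only point requiring a bit of care is the choice of $\mu'$ — one must use that condition~(\ref{conditionMuLambda}) is an open condition in $\mu$ to ensure the reduction goes through with strict inequality preserved.
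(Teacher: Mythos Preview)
Your proposal is correct and follows essentially the same approach as the paper: choose $\mu'<\mu$ so that~(\ref{conditionMuLambda}) still holds, split according to whether $|\eta_0|\geqslant\mu' n^d$, bound the Poisson lower tail exponentially, and apply Lemma~\ref{lemmaExpTopplingsDeter} on the complementary event. The only cosmetic difference is that the paper computes the Poisson tail explicitly via Stirling's formula rather than invoking a Chernoff/Cram\'er bound, and your appeal to Lemma~\ref{lemmaMonotonicity} in the final step is redundant since Lemma~\ref{lemmaExpTopplingsDeter} already gives the supremum bound over all $\eta$ with $|\eta|\geqslant\mu' n^d$.
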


\begin{proof}
Let~$\lambda\in(0,\,\infty)$ and~$\mu\in(0,\,1)$ satisfying the condition~(\ref{conditionMuLambda}).
Because this condition is a strict inequality, we may fix~$0<\mu'<\mu$ such that the condition~(\ref{conditionMuLambda}) still holds for~$\lambda$ and~$\mu'$.
Let us take~$c>0$, which will be chosen later.

Remember that~$\Pml$ is a joint probability measure where the initial configuration~$\eta_0$ is distributed according to~$\Pm$, i.e.,\ with i.i.d.\ Poisson numbers of active particles on each site, and the field of toppling instructions~$\tau$ is distributed according to~$\Pl$,~$\eta_0$ and~$\tau$ being independent.
By conditioning on~$\eta_0$, we may write
\begin{align*}
\Pml\Big(\,\norme{m_{\eta_0}^\tau}\,<\,e^{cn^d}\,\Big)
&\ =\ \sum_{\eta\in\N^{\torus}}\,
\Pm\big(\eta_0=\eta\big)
\ \Pl\Big(\,\norme{m_\eta^\tau}\,<\,e^{cn^d}\,\Big)\\
&\ \leqslant\ \Pm\Big(\,|\eta_0|<\mu'n^d\,\Big)
\,+\,\sup_{\eta\in\N^{\torus}\,:\,\abs{\eta}\geqslant\mu' n^d}
\Pl\Big(\,\norme{m_\eta^\tau}\,<\,e^{cn^d}\,\Big)\,.\numberthis\label{conditioningEta}
\end{align*}
Remember that, under~$\Pm$, the initial configuration~$\eta_0$ contains a total number of active particles~$|\eta_0|$ which follows a Poisson distribution with parameter~$\mu n^d$, whence, using again Stirling's formula,
\begin{multline*}
\Pm\Big(\,|\eta_0|<\mu'n^d\,\Big)
\ =\ \sum_{j<\mu'n^d}\frac{(\mu n^d)^j}{j!}e^{-\mu n^d}
\ \leqslant\ e^{-\mu n^d}\Ceil{\mu' n^d}\frac{(\mu n^d)^{\mu' n^d}}{(\mu' n^d)!}\\
\ \eqninfty\ e^{-\mu n^d}(\mu' n^d)\frac{(\mu n^d)^{\mu' n^d}}{(\mu' n^d)^{\mu' n^d}e^{-\mu' n^d}\sqrt{2\pi\mu' n^d}}
\ =\ \sqrt{\frac{\mu' n^d}{2\pi}}\,e^{-c_1n^d}
\quadavec
c_1\ =\-(\mu-\mu')-\mu'\ln\left(\frac{\mu}{\mu'}\right)\,.
\end{multline*}
Note that since we took~$\mu'<\mu$, the convexity of the logarithm function ensures that~\smash{$c_1>0$}.
Thus, for~$n$ large enough, we have
\begin{equation}
\label{PoissonInitialNumber}
\Pm\Big(\,|\eta_0|<\mu'n^d\,\Big)
\ \leqslant\ e^{-c_1 n^d/2}\,.
\end{equation}
Besides, let us take~$c_2>0$ given by Lemma~\ref{lemmaExpTopplingsDeter}, applied with~$\lambda$ and~$\mu'$, so that
\begin{equation}
\label{ApplicationLemmaExpTopplingsDeter}
\sup_{\eta\in\N^{\torus}\,:\,\abs{\eta}\geqslant\mu' n^d}
\Pl\Big(\,\norme{m_\eta^\tau}\,<\,e^{c_2n^d}\,\Big)
\ \leqslant\ e^{-c_2 n^d}\,.
\end{equation}
If we choose~$c>0$ such that~$c<\min(c_1/2,\,c_2)$, then plugging~(\ref{PoissonInitialNumber}) and~(\ref{ApplicationLemmaExpTopplingsDeter}) into~(\ref{conditioningEta}), we get
$$\Pml\Big(\,\norme{m_{\eta_0}^\tau}\,<\,e^{cn^d}\,\Big)
\ \leqslant\ e^{-c_1 n^d}+e^{-c_2 n^d}
\ <\ e^{-cn^d}\,,$$
for~$n$ large enough.
\end{proof}

\subsection{Concluding proof of Theorem~\ref{thmSlowPhase}}
\label{sectionProofSlowPhaseFinal}

We are now in a position to conclude the proof of Theorem~\ref{thmSlowPhase}, which easily follows from Lemma~\ref{lemmaSlowPhaseTopplings}.

\begin{proof}[Proof of Theorem~\ref{thmSlowPhase}]
There only remains to deal with the Poisson clocks, to translate the result of Lemma~\ref{lemmaSlowPhaseTopplings} about the number of topplings into a result on the fixation time of the continuous-time process.

As explained in paragraph~\ref{sectionRandomInstructions}, the Activated Random Walk model on the torus can be constructed by drawing the initial configuration~$\eta_0$ and the field of toppling instructions~$\tau$ distributed according to~$\Pml$, and then a collection of independent Poisson process~$(\theta_i)_{i\in\N}$ of intensity~$1+\lambda$, independent of~$\eta_0$ and~$\tau$.
Each time one of the clocks~$(\theta_i)_{i<|\eta_0|}$ rings, we choose a particle (not a site) uniformly at random, and topple its site if this particle is active at that moment (and we do nothing if the chosen particle is inactive).
With this construction, we can write, for~$c>0$ which will be chosen later,
\begin{multline}
\label{conditionningPoisson}
\PARW\Big(\,\calT_n\,<\,e^{cn^d}\,\Big)
\ \leqslant\ \PARW\Big(\,\big\{\calT_n\,<\,e^{cn^d}\big\}\cap\big\{\abs{\eta_0}\leqslant n^d\big\}\cap\big\{\norme{m_{\eta_0}^\tau}\geqslant e^{3cn^d}\big\}\,\Big)\\
+\,\Pm\Big(\,\abs{\eta_0}>n^d\,\Big)
\,+\,\Pml\Big(\,\norme{m_{\eta_0}^\tau}\,<\,e^{3cn^d}\,\Big)\,.
\end{multline}
We know that the clocks~$(\theta_i)_{i<|\eta_0|}$ ring a total number of times at least~$\|m_{\eta_0}^\tau\|$ during the time interval~$[0,\,\calT_n]$, whence
\begin{align*}
\PARW\Big(\,\big\{\calT_n\,<\,e^{cn^d}\big\}\cap\big\{\abs{\eta_0}\leqslant n^d\big\}\cap\big\{\norme{m_{\eta_0}^\tau}\geqslant e^{3cn^d}\big\}\,\Big)
&\ \leqslant\ 
\PARW\bigg(\,\sum_{i<n^d}\Big|\theta_i\cap\big[0,\,e^{cn^d}\big]\Big|\,\geqslant\,e^{3cn^d}\,\bigg)\\
&\ \leqslant\ n^d\,\mathbb{E}_\mu^\lambda\Big[\big|\theta_0\cap\big[0,\,e^{cn^d}\big]\big|\Big]e^{-3cn^d}
\ =\ n^d (1+\lambda) e^{-2cn^d}\,,\numberthis\label{clocksChebychev}
\end{align*}
where we used  Chebychev's inequality.
Besides, using a Chernoff bound, we can write, for~$a>0$,
\begin{equation}
\label{chernoffBound}
\Pm\Big(\,\abs{\eta_0}>n^d\,\Big)
\ \leqslant\ \Em\Big(e^{a\abs{\eta_0}}\Big)e^{-an^d}
\ =\ \exp\Big((e^a-1)\mu n^d-a n^d\Big)\,.
\end{equation}
Since~$\mu<1$ and~$(e^a-1)\mu-a\sim(\mu-1)a$ when~$a\to 0$, we may choose~$a>0$ such that~$(e^a-1)\mu-a<0$.
Also, we take~$c_1>0$ given by Lemma~\ref{lemmaSlowPhaseTopplings}, so that, for~$n$ large enough,
\begin{equation}
\label{applicationLemmaSlowPhaseTopplings}
\Pml\Big(\,\norme{m_{\eta_0}^\tau}\,<\,e^{c_1n^d}\,\Big)
\ <\ e^{-c_1 n^d}\,.
\end{equation}
Plugging~(\ref{clocksChebychev}),~(\ref{chernoffBound}) and~(\ref{applicationLemmaSlowPhaseTopplings}) into~(\ref{conditionningPoisson}), and choosing~$c>0$ such that
$$c\ <\ \min\big(a-(e^a-1)\mu,\,c_1\big)\,,$$
we get, for~$n$ large enough,
$$\PARW\Big(\,\calT_n\,<\,e^{cn^d}\,\Big)
\ \leqslant\ n^d (1+\lambda)e^{-2cn^d}+e^{-(a-(e^a-1)\mu)n^d}+e^{-c_1 n^d}
\ <\ e^{-cn^d}\,.$$
Decreasing the constant~$c$ if necessary so that the result holds for all~$n\geqslant 1$, we obtain Theorem~\ref{thmSlowPhase}.
\end{proof}

\section{From the torus to the infinite lattice}
\label{sectionProofThmLink}

We now turn to the proof of Theorem~\ref{thmLink}, which shows that the fixation time on the torus in the sub-critical regime cannot satisfy~(\ref{tempsExp}).
Before this, we start with two elementary results about the stabilization time of a box with open boundary condition and about Internal Diffusion Limited Aggregation.

\subsection{Upper bound on the stabilization time}
\label{sectionStabTime}

We start by proving the following rough estimate:

\begin{lemma}
\label{lemmaStabTime}
We have the following upper bound on the number~\smash{$\norme{m_{\Lambda_n,\,\eta_0}^\tau}$} of toppling instructions necessary to stabilize the box~$\Lambda_n\subset\Z^d$ with particles being killed when they jump out of~$\Lambda_n$:
$$\forall\,b>0\quad
\forall\,\mu\in(0,\,1)\qquad
\Pml\Big(\norme{m_{\Lambda_n,\,\eta_0}^\tau}>e^{bn^d}\Big)
\ =\ O\left(\frac{n^{1+3d/2}}{e^{bn^d/2}}\right)\,.$$
\end{lemma}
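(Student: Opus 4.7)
The plan is to apply Markov's inequality to $\sqrt{T}$, where $T=\norme{m_{\Lambda_n,\,\eta_0}^\tau}$, since this produces a tail of exactly the shape claimed: by Markov followed by Jensen,
$$\Pml\Big(T>e^{bn^d}\Big)\ =\ \Pml\Big(\sqrt{T}>e^{bn^d/2}\Big)\ \leqslant\ e^{-bn^d/2}\,\Eml\big[\sqrt{T}\big]\ \leqslant\ e^{-bn^d/2}\,\sqrt{\Eml[T]}\,.$$
Hence the statement reduces to the polynomial moment bound $\Eml[T]=O(n^{2+3d})$, whose square root yields the claimed factor~$n^{1+3d/2}$.

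To bound the expected total odometer, I would exploit the Abelian property (Lemma~\ref{lemmaAbelian}) and topple particles one at a time. Conditioning on $\eta_0$, the procedure is: select an active particle, repeatedly topple its current site until the particle either exits $\Lambda_n$ (and is killed at the boundary) or falls asleep, then move on to some other active particle, including any previously-settled one that may have been woken up in the meantime. The jumps performed by a single ``tagged'' particle during one excursion are precisely the steps of a simple random walk on $\Lambda_n$ killed at $\partial\Lambda_n$, whose expected exit time is $O(n^2)$ uniformly in the starting position. Since each toppling instruction is a jump with probability $1/(1+\lambda)$ and a sleep attempt with probability $\lambda/(1+\lambda)$, each excursion contributes an expected $O\big((1+\lambda)n^2\big)$ topplings.

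The only subtlety is controlling how many excursions can occur in total. The number of excursions equals $\abs{\eta_0}$ plus the total number of wake-ups, and each wake-up requires a jump of some other particle onto the sleeping particle's site. A crude bookkeeping argument---using that each particle can be woken up at most $\abs{\eta_0}-1$ times and that each excursion contains $O(\abs{\eta_0}\,n^2)$ jumps in the worst case---yields $\Eml[T\mid\eta_0]\leqslant C(\lambda)\abs{\eta_0}^3\,n^2$. Taking expectation and using that all polynomial moments of~$\abs{\eta_0}$ are $O\big((\mu n^d)^k\big)$, since $\abs{\eta_0}$ is Poisson distributed with mean $\mu n^d$, gives the required $\Eml[T]=O(n^{2+3d})$.

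The main obstacle will be rigorously justifying the particle-by-particle tracking within the Diaconis--Fulton formalism, where instructions are attached to sites rather than to particles. I would handle this by introducing a labelled version of the configuration that tags one active particle at a time and specifies the toppling order accordingly, then invoking Lemma~\ref{lemmaAbelian} to ensure the odometer under this particular ordering coincides with that of any other legal sequence. The bound $\abs{\eta_0}^3 n^2$ on the conditional expectation is certainly far from sharp, but it is amply sufficient for the stated tail estimate.
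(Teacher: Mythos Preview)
Your Markov/Jensen reduction is clean and correct: once you have $\Eml[T]=O(n^{2+3d})$, the tail bound follows exactly as stated. The problem is the argument you give for the moment bound.

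The claim that ``each particle can be woken up at most $|\eta_0|-1$ times'' is false. With two particles, each can wake the other arbitrarily many times: $A$ walks and sleeps, $B$ walks over $A$'s site (waking $A$) and sleeps elsewhere, then $A$ walks over $B$'s site and sleeps, and so on. Nothing limits this alternation by $|\eta_0|$. Consequently your count of excursions, and hence your bound $C(\lambda)|\eta_0|^3 n^2$ on $\Eml[T\mid\eta_0]$, is unjustified as written. (The side claim that ``each excursion contains $O(|\eta_0|\,n^2)$ jumps in the worst case'' is also unclear; the relevant quantity is the expected exit time, which is $O(n^2)$ and has nothing to do with $|\eta_0|$.)

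The moment bound itself is true, even in the sharper form $\Eml[T\mid\eta_0]\leqslant C(\lambda)\,|\eta_0|\,n^2$, but the correct route bypasses wake-up counting entirely. Label the particles as in the paper and attribute each toppling to the minimal-label particle present. Each particle's jump trajectory is marginally a simple random walk killed at $\partial\Lambda_n$, so its number of jumps is bounded by the exit time, whose expectation is $O(n^2)$; summing gives $\Eml[J\mid\eta_0]\leqslant C|\eta_0|n^2$. For the sleep topplings, split them into effective sleeps (at singleton sites) and wasted sleeps (at sites with $\geqslant 2$ particles). Effective sleeps number at most $J+|\eta_0|$, since every effective sleep except the final one for each particle must be followed by a wake-up, and each wake-up consumes a jump. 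Wasted sleeps, under the minimal-label rule, come in geometric runs of mean $\lambda$ terminated by a jump, so their expected total is at most $\lambda\,\Eml[J\mid\eta_0]$. This yields $\Eml[T\mid\eta_0]\leqslant(2+\lambda)C|\eta_0|n^2+|\eta_0|$.

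For comparison, the paper does not compute a moment at all. It conditions on $|\eta_0|\leqslant c_b n^d$, separates jump topplings $J_n$ from sleep topplings via a binomial concentration bound, and then controls $\Pml(J_n>M')$ by a union bound over particles: if $J_n$ is large, some particle has made at least $M'/(c_b n^d)$ jumps yet is still inside $\Lambda_n$, which has probability $O(n^{1+d/2}e^{-bn^d/2})$ by a local CLT estimate. Your moment approach, once repaired, is arguably more direct and in fact yields a slightly better polynomial prefactor $n^{1+d/2}$ rather than $n^{1+3d/2}$.
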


\begin{proof}
Let~$b>0$ and~$\mu\in(0,\,1)$.
Let us write~$c_b=2+b/2$.
Conditioning on the initial configuration~$\eta_0$, we can write
\begin{align*}
\Pml\Big(\norme{m_{\Lambda_n,\,\eta_0}^\tau}>e^{bn^d}\Big)
&\ =\ \sum_{\eta\in\N^{\Lambda_n}}\Pm\big(\eta_0=\eta\big)
\,\Pl\Big(\norme{m_{\Lambda_n,\,\eta}^\tau}>e^{bn^d}\Big)\\
&\ \leqslant\ \Pm\Big(\abs{\eta_0}>c_b n^d\Big)\,+\,
\sup_{\eta\in\N^{\Lambda_n}\,:\,|\eta|\leqslant c_b n^d}\,\Pl\Big(\norme{m_{\Lambda_n,\,\eta}^\tau}>e^{bn^d}\Big)\,.
\end{align*}
To handle the first term, we use a Chernoff bound, recalling that~$\abs{\eta_0}$ follows a Poisson distribution with mean~$\mu n^d$.
Recalling that~$\Em$ denotes the expectation relative to~$\Pm$, we have
$$\Pm\Big(\abs{\eta_0}>c_b n^d\Big)
\ \leqslant\ \Em\Big(e^{\abs{\eta_0}}\Big)e^{-c_b n^d}
\ =\ e^{(e\mu-\mu-c_b)n^d}
\ \leqslant\ e^{-bn^d/2}\,.$$
Thus, it remains to show that
\begin{equation}
\label{supEtaNorme}
\sup_{\eta\in\N^{\Lambda_n}\,:\,|\eta|\leqslant c_b n^d}\,\Pl\Big(\norme{m_{\Lambda_n,\,\eta}^\tau}>e^{bn^d}\Big)
\ =\ O\left(\frac{n^{1+3d/2}}{e^{bn^d/2}}\right)\,.
\end{equation}
To do so, we fix~$\eta\in\N^{\Lambda_n}$ such that~$\abs{\eta}\leqslant c_b n^d$, and we consider the number~$J_n$ of jump instructions (i.e., topplings which are not sleep instructions) which are necessary to stabilize the box~$\Lambda_n$ with particles ignored once they jump out of this box.
Writing~\smash{$M=\big\lceil e^{bn^d}\big\rceil$} and~\smash{$M'=\big\lfloor e^{bn^d}/(2+2\lambda)\big\rfloor$}, we have
\begin{equation}
\label{boundJT}
\Pl\Big(\norme{m_{\Lambda_n,\eta}^\tau}\geqslant M\Big)
\ \leqslant\ \Pl\big(J_n> M'\big)
\,+\,\Pl\Big(\,\big\{\norme{m_{\Lambda_n,\eta}^\tau}\geqslant M\big\}\cap\big\{J_n\leqslant M'\big\}\,\Big)\,.
\end{equation}
If~$\norme{m_{\Lambda_n,\eta}^\tau}\geqslant M$ but~$J_n\leqslant M'$, then among the first~$M$ topplings revealed, at most~$M'$ are jump instructions.
Thus, if~$X_1,\,\ldots,\,X_M$ are i.i.d.\ Bernoulli random variables with parameter~$p=1/(1+\lambda)$, we have
$$\Pl\Big(\,\big\{\norme{m_{\Lambda_n,\eta}^\tau}\geqslant M\big\}\cap\big\{J_n\leqslant M'\big\}\,\Big)
\ \leqslant\ \Proba\big(X_1+\cdots+X_M\leqslant M'\big)\,,$$
whence, by Chebychev's inequality,
\begin{equation}
\label{probaJT}
\Pl\Big(\,\big\{\norme{m_{\Lambda_n,\eta}^\tau}\geqslant M\big\}\cap\big\{J_n\leqslant M'\big\}\,\Big)
\ \leqslant\ \frac{\mathrm{Var}(X_1+\cdots+X_M)}{(Mp-M')^2}
\ \leqslant\ \frac{Mp(1-p)}{(Mp/2)^2}
\ =\ \frac{4\lambda}{M}
\ \leqslant\ 4\lambda e^{-bn^d}\,.
\end{equation}
We now deal with the other term~\smash{$\Pl\big(J_n> M'\big)$} appearing in~(\ref{boundJT}).
In the initial configuration~$\eta$, let us label the particles present at each site~$x\in\Lambda_n$ with labels~$i\in\{1,\,\ldots,\,\eta(x)\}$.
When we topple a site containing more than one particle, let us say that we topple the particle carrying the minimal label, with respect to an arbitrary order on~$\Lambda_n\times\N$.
This allows us to define, for each particle labelled~$(x,\,i)$, the number~$j^{x,i}$ of jumps (i.e., topplings which are not sleep instructions) it makes during the stabilization, and its trajectory~$X^{x,i}(0)=x,\,\ldots,\,X^{x,i}(j^{x,i})$, where~$X^{x,i}(m)$ is the position of the particle labelled~$(x,\,i)$ after~$m$ jump instructions.
With these notations, we have
$$J_n\ =\ \sum_{x\in\Lambda_n}\,\sum_{i=1}^{\eta(x)}\,j^{x,i}\,.$$
Given that~$|\eta|\leqslant c_b n^d$, this implies that
$$\Pl\big(J_n>M'\big)
\ \leqslant\ \sum_{x\in\Lambda_n}\,\sum_{i=1}^{\eta(x)}\,
\Pl\left(j^{x,i}>\frac{M'}{c_b n^d}\right)\,.$$
Let us complete the trajectory of each particle with independent jumps to obtain, for each particle labelled~$(x,\,i)$, a trajectory~\smash{$(X^{x,i}(m))_{m\in\N}\in(\Z^d)^\N$}.
Thus, for each particle labelled~$(x,\,i)$, the random walk~$X^{x,i}$ is distributed as a symmetric random walk on~$\Z^d$.
We then note that if we have~$j^{x,i}>M'/(c_b n^d)$, it implies that~\smash{$X^{x,i}\big(\Ent{M'/(c_b n^d)}\big)\in\Lambda_n$}, whence
\begin{equation}
\label{probaJ}
\Pl\big(J_n>M'\big)
\ \leqslant\ \sum_{x\in\Lambda_n}\,\sum_{i=1}^{\eta(x)}\,\Pl\Bigg(X^{x,i}\left(\Ent{\frac{M'}{c_b n^d}}\right)\in\Lambda_n\Bigg)
\ \leqslant\ c_b n^d\,P_0^{\Z^d}\Bigg(\norme{X\left(\Ent{\frac{M'}{c_b n^d}}\right)}_\infty\leqslant\,n\Bigg)\,,
\end{equation}
where, under~$P_0^{\Z^d}$,~$X$ is a simple random walk on~$\Z^d$ started at the origin.
By the pigeonhole principle, among the first~\smash{$\Ent{M'/(c_b n^d)}$} steps of a random walk on~$\Z^d$, at least~\smash{$\Ent{M'/(d c_b n^d)}$} of them are in the same direction.
Therefore, we have
$$P_0^{\Z^d}\Bigg(\norme{X\left(\Ent{\frac{M'}{c_b n^d}}\right)}_\infty\leqslant\, n\Bigg)
\ \leqslant\ d\, P_0^\Z\Bigg(\abs{X\left(\Ent{\frac{M'}{d c_b n^d}}\right)}\,\leqslant\, n\Bigg)\,.$$
A classical computation shows that, when~$k\to\infty$,
$$P_0^\Z\Big(\big|X(k)\big|\leqslant n\Big)
\ \leqslant\ \frac{n+1}{2^{k}}\binom{k}{\Ent{k/2}}
\ \stackrel{k\to\infty}{\sim}\ \sqrt{\frac{2}{\pi}}\frac{n+1}{\sqrt{k}}\,,$$
whence, recalling that~\smash{$M'=\big\lfloor e^{bn^d}/(2+2\lambda)\big\rfloor$},
$$P_0^{\Z^d}\Bigg(\norme{X\left(\Ent{\frac{M'}{c_b n^d}}\right)}_\infty\leqslant \,n\Bigg)
\ =\ O\big(n^{1+d/2}e^{-bn^d/2}\big)\,.$$
Plugging this into~(\ref{probaJ}), we obtain
\begin{equation}
\label{probaJ2}
\Pl\big(J_n>M'\big)
\ =\ O\big(n^{1+3d/2}e^{-bn^d/2}\big)\,,
\end{equation}
the bound being uniform with respect to the initial configuration~$\eta$.
Replacing~(\ref{probaJT}) and~(\ref{probaJ2}) into~(\ref{boundJT}), we obtain the desired bound~(\ref{supEtaNorme}), concluding the proof.
\end{proof}

\subsection{A bound on Internal Diffusion Limited Aggregation}
\label{sectionIDLA}

In the proof of Theorem~\ref{thmLink}, we will use a rough estimate about a related model called Internal Diffusion Limited Aggregation, which corresponds to the Activated Random Walk model with an infinite sleep rate.
In this model, active particles fall asleep instantaneously as soon as they are alone on a site.

Starting from an initial configuration of active particles~\smash{$\eta\in\N^{\Z^d}$} with a finite number of particles, the model almost surely reaches a stable configuration which we write~$\sigma_{\eta}^\tau$, as defined in section~\ref{sectionAbelian}.
The result we need is the following Lemma, which is a weak version of Lemma~A of~\cite{JLS12} (note that~$\Pinf$ is simply~$\Pl$ with~$\lambda=\infty$):

\begin{lemma}
\label{lemmaIDLA}
For~$\beta,\,R\in(0,\,\infty)$, let us denote by~$B(0,\,R)$ the Euclidian ball of radius~$R$ centered on the origin, and let~$\calC_{\beta,R}$ be the set of all possible initial confi\smash{gu}rations with not more than~$\beta R^d$ particles and no particle inside~$B(0,\,R)$, that is to say:
$$\calC_{\beta,R}
\ =\ \Big\{\,\eta\in\N^{\Z^d}\ :\ \abs{\eta}\leqslant \beta R^d\quadet\forall\,x\in \Z^d\cap B(0,\,R)\quad \eta(x)=0\,\Big\}\,.$$
Then there exists~$K=K(d)>0$ such that, for~$\beta$ small enough, when~$d\geqslant 3$, we have
$$\sup_{\eta\in\calC_{\beta,R}}\,
\Pinf\big(\sigma_\eta^\tau(0)>0\big)
\ \leqslant\ e^{-K R^2}\,,$$
whereas for~$d=2$ we have
$$\sup_{\eta\in\calC_{\beta,R}}\,
\Pinf\big(\sigma_\eta^\tau(0)>0\big)
\ \leqslant\ \exp\left(-K \frac{R^2}{\ln R}\right)\,.$$
\end{lemma}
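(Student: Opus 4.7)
The strategy is to exploit the monotonicity and Abelian properties of IDLA (the $\lambda=\infty$ case of ARW, to which sections~\ref{sectionDiaconis} apply) to reduce to a sharp single-source estimate, and then to use a martingale comparison with simple random walk paths.

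First I would reduce the supremum to a worst case. By Lemma~\ref{lemmaMonotonicity}, adding particles can only enlarge the cluster, so
\[
\sup_{\eta\in\calC_{\beta,R}}\Pinf\big(\sigma^\tau_\eta(0)>0\big)
\]
is non-decreasing in $\eta$. Hence it suffices to bound the probability for the configuration in which $m=\lceil\beta R^d\rceil$ particles are all placed at a single site $y\in\Z^d$ lying on $\partial B(0,R)$ (any other $\eta\in\calC_{\beta,R}$ is majorised, after translation, by such a stacked configuration shifted to the closest point to the origin). By the Abelian property (Lemma~\ref{lemmaAbelian}), we process the $m$ particles one after the other, each doing an independent simple random walk from $y$ until it reaches a site unoccupied by previously settled particles.

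The core estimate I would then prove is the \emph{outer fluctuation bound}: for the single-source IDLA cluster $A_m$ at $y$ with $m=\lceil\beta R^d\rceil$ particles,
\[
\Pinf\Big(A_m\not\subset B(y,R)\Big)\ \leqslant\ \begin{cases} e^{-K R^{2}} &\text{if }d\geqslant 3,\\ \exp\!\big(-K R^2/\ln R\big) &\text{if }d=2.\end{cases}
\]
Since $0\in \partial B(y,R)$, this implies the lemma. The argument I have in mind is the standard \emph{Lawler--Bramson--Griffeath / Jerison--Levine--Sheffield harmonic-function trick}, but carried out with crude bounds rather than the sharp logarithmic bound. Fix a point $z$ with $|z-y|\geqslant R$ and consider the hitting-time martingale $h(x)=P_x(\tau_z<\tau_{\partial B(y,CR)})$ on a slightly enlarged ball. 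For each of the $m$ independent walks $(X^{(i)})_{i=1}^m$, define the indicator $M_i=\mathbf 1\{X^{(i)}\text{ visits }z\text{ before exiting }B(y,CR)\text{ and before settling}\}$. By coupling with independent simple random walks stopped at $\partial B(y,CR)$, we have
\[
\Pinf(z\in A_m)\ \leqslant\ \Pinf\Big(\sum_{i=1}^m M_i\geqslant 1\Big),
\]
and a large-deviation bound for the hitting time $\tau_z$ of a random walk started at $y$ gives $P_y(\tau_z<T)\leqslant C\exp(-c|z-y|^2/T)$, which, combined with a quadratic upper bound on the settling time of each particle in terms of the cluster diameter, yields the announced exponential decay in $R^2$.

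The main obstacle is controlling the settling time and propagating the bound across the $m$ particles without losing the exponential factor. For $d\geqslant 3$, transience makes the Green function estimates straightforward and gives the clean $e^{-KR^2}$ bound. For $d=2$, the recurrence of simple random walk forces one to work inside a large ball $B(y,R^{\alpha})$ for some $\alpha>1$, and the harmonic measure on this auxiliary boundary introduces the factor $\ln R$ in the denominator of the exponent, which is the source of the $e^{-KR^2/\ln R}$ estimate in the two-dimensional case. A union bound over $m\leqslant\beta R^d$ particles combined with these large-deviation estimates (and a choice of $\beta$ small enough that $\beta R^d\cdot e^{-KR^2}\leqslant e^{-(K/2) R^2}$) completes the proof.
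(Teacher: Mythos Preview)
The paper does not prove this lemma: it is quoted as ``a weak version of Lemma~A of~\cite{JLS12}'' and used as a black box, so there is no in-paper argument to compare against, only the Jerison--Levine--Sheffield proof.

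Your attempt has a genuine gap at the very first step. You invoke Lemma~\ref{lemmaMonotonicity} to claim that an arbitrary $\eta\in\calC_{\beta,R}$ is ``majorised, after translation,'' by the configuration with all $\lceil\beta R^d\rceil$ particles stacked at a single site~$y\in\partial B(0,R)$. But Lemma~\ref{lemmaMonotonicity} is monotonicity in the \emph{pointwise} order $\eta\leqslant\eta'$, and a spread-out configuration is simply not pointwise below a stacked one: relocating a particle from one site to another neither adds nor removes mass, so the lemma says nothing. There is no general IDLA principle asserting that gathering all particles at the source point closest to the target maximises the probability of reaching that target; heuristically, particles spread around $\partial B(0,R)$ may well cooperate towards the origin more effectively than a single off-centre ball of volume~$\beta R^d$. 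Hence the reduction to single-source IDLA is unjustified, and the remainder of your sketch, which is essentially the Lawler--Bramson--Griffeath outer-error argument for a \emph{single} source, does not address the lemma as stated.

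The argument in~\cite{JLS12} does not pass through single source. It works directly with an arbitrary initial configuration: for a target site~$z$, one controls the expected number of \emph{free} random walks started from the support of~$\eta$ that hit~$z$ before exiting a suitable large ball, via Green's-function estimates, and compares this to the number of IDLA walks that do so before settling. The smallness of~$\beta$ is exactly what makes this expectation~$o(1)$, after which a Chernoff bound yields the exponential rate; the logarithmic loss in~$d=2$ comes from the two-dimensional Green's function, as you correctly anticipate. Your final paragraph is therefore in the right spirit, but it must be run for the genuine multi-source configuration rather than after a reduction that does not hold.
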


Note that in dimension~$d=1$, we simply have~\smash{$\Pinf\big(\sigma_\eta^\tau(0)>0\big)=0$} for every initial configuration~$\eta\in\calC_{1,R}$.

In fact, we do not need a bound as sharp as above, and we only use that, whatever the dimension~$d\geqslant 1$, we have, for~$\beta$ small enough,
\begin{equation}
\label{lemmaIDLAweak}
\forall\,R\geqslant 1\qquad
\sup_{\eta\in\calC_{\beta,R}}\,
\Pinf\big(\sigma_\eta^\tau(0)>0\big)
\ \leqslant\ e^{-K(d) R}\,.
\end{equation}

\subsection{Proof of Theorem~\ref{thmLink}}
\label{sectionProofThmLinkFinal}

\begin{proof}[Proof of Theorem~\ref{thmLink}]
Let~$\lambda\in(0,\,\infty)$ and let~$\mu<\mu_c(\lambda)$.
We want to prove that the property~(\ref{tempsExp}) does not hold on the torus.
Hence, we have to prove that
\begin{equation}
\label{nonTempsExp}
\forall\,c>0\quad
\exists\,n\geqslant 1\qquad
\PARW\Big(\,\calT_n\,<\,e^{cn^d}\,\Big)
\ \geqslant\ e^{-cn^d}\,,
\end{equation}
where~$\calT_n$ is the stabilization time of the model on the torus.
Thus, we start by fixing an arbitrary real number~$c>0$.
Let~$\beta>0$ be given by Lemma~\ref{lemmaIDLA} about Internal Diffusion Limited Aggregation, and let us choose two real parameters~$a$ and~$\varepsilon$ such that
\begin{equation}
\label{conditionAlpha}
0\ <\ a\ <\ \frac{c}{96d\mu}\,\wedge\,\frac{c}{72d\ln(1+1/\lambda)}\,\wedge\,\frac{1}{3}
\end{equation}
and
\begin{equation}
\label{conditionEpsilon}
0\ <\ \varepsilon\ <\ \frac{\beta a^d}{4^d}\,\wedge\,\frac{c}{24}\,.
\end{equation}
As explained in section~\ref{sectionSketchThmLink}, we consider four sizes of sub-boxes of the torus, by writing~\smash{$p:\Z^d\to\torus$} for the canonical projection application and by considering the projection of the boxes~$\Lambda_k$ defined by~(\ref{defBoite}).
We call~\smash{$\boxO=\torus=p(\Lambda_n)$} the whole box,~\smash{$\boxI=p(\Lambda_{n-\Ent{an}})$} the medium box,~\smash{$\boxII=p(\Lambda_{n-2\Ent{an}})$} the small box and~\smash{$\boxIII=p(\Lambda_{n-3\Ent{an}})$} the tiny box.
For every box~$\Lambda_k$, we denote its internal boundary by
$$\partial^i\Lambda_k
\ =\ \Big\{\,x\in\Lambda_k\ :\ \exists\, y\in\Z^d\setminus\Lambda_k\quad x\sim y\,\Big\}
\ =\ \Lambda_k\setminus\Lambda_{k-2}\,,$$
and its external boundary by
$$\partial^e\Lambda_k\ =\ \Big\{\,x\in\Z^d\setminus\Lambda_k\ :\ \exists\,y\in\Lambda_k\quad x\sim y\,\Big\}\,,$$
and, for every~$j\in\{0,\,1,\,2,\,3\}$, we write~\smash{$\partial^i B_j=p(\partial^i\Lambda_{n-j\Ent{an}})$} and~\smash{$\partial^e B_j=p(\partial^e\Lambda_{n-j\Ent{an}})$} for the projections on the torus of these boundaries.

As explained in paragraph~\ref{sectionSketchThmLink}, the idea is to assume that we start with all the particles inside the small box~\smash{$\boxII$}, then to stabilize this small box, and then to try to stabilize the particles which escaped from this box, without waking up the particles which are sleeping inside of the tiny box~\smash{$\boxIII$}.

We now describe a toppling procedure, which, when successful, yields a sequence of topplings which stabilizes the configuration in the torus~\smash{$\boxO$}, with no particle ever visiting the boundary~\smash{$\partial^i\boxO$}.
During this procedure, we allow ourselves to use acceptable topplings, as defined in section~\ref{sectionSitewise}.
Thanks to the monotonicity property of Lemma~\ref{lemmaMonotonicity}, this yields an upper bound on the number of topplings necessary to stabilize the torus with only legal topplings and, thereby, on the stabilization time of the torus in the continuous-time process.

Let us now describe our toppling procedure, which is composed of several steps, represented on figure~\ref{figTopplingThmLink}.
At each step, in certain cases, the procedure can fail, in which case we stop the procedure.

\begin{itemize}
\item Step~0 is to draw the initial configuration~$\eta_0$ according to~$\Pm$, that is to say, with i.i.d.\ Poisson numbers of active particles with mean~$\mu$ on each site.
We say that step~0 is successful if there are no particles outside of the small box~\smash{$\boxII$}, i.e., if~$\eta_0(x)=0$ for all~\smash{$x\in\boxO\setminus\boxII$}.

\item In the first step, we stabilize the small box~\smash{$\boxII$}, with particles being stopped in place upon jumping out of~\smash{$\boxII$}.
We say that this step is successful if it terminates with a finite number of topplings and at most~\smash{$\varepsilon n^d$} particles jump out of the small box.
Then, we obtain a configuration with some sleeping particles in~\smash{$\boxII$}, and at most~\smash{$\varepsilon n^d$} active particles all located on the external boundary~\smash{$\partial^e\boxII$}.

\item Then, during the second step, the idea is to force all the particles left on~\smash{$\partial^e\boxII$} to walk until they jump out of the medium box~\smash{$\boxI$}.
It is during this step that we use acceptable topplings.
First, we choose a non-empty site of~\smash{$\partial^e\boxII$} and we topple it.
If it was a sleep instruction, we topple it again (this might not be a legal toppling but it is acceptable), until we get a jump instruction to one of the neighbouring sites.
We then topple this neighbouring site, until we get a jump instruction, and so on, until we reach a site of~\smash{$\partial^i\boxI$}.
We then repeat this procedure until all the sites of~\smash{$\partial^e\boxII$} are empty.

Although formally we are not allowed to say that we topple a specific particle instead of a site, one can think of this step as forcing a particle to walk, ignoring the sleep instructions and ignoring all the other particles.
Thus, during this step, some particles which were asleep in the small box~\smash{$\boxII$} after step~1 might wake up, but the number of particles on each site of this small box is left unchanged after step~2.

We say that this step~2 is successful if it terminates in a finite number of topplings and if no active particle ever visits the tiny box~\smash{$\boxIII$} during this step.
In this case, we obtain a configuration with some sleeping particles in the tiny box~\smash{$\boxIII$}, some sleeping and some active particles in~\smash{$\boxII\setminus\boxIII$} (because some particles left asleep there after step~1 may have been awaken during step~2), and at most~\smash{$\varepsilon n^d$} active particles on the boundary~\smash{$\partial^e\boxI$}, with no particles elsewhere.
Note in particular that, after step~2, no site of the small box~\smash{$\boxII$} can contain more than one particle.

\item During step~3, as long as there is a site containing at least two particles, we topple it.
We say that this step is successful if it terminates and if no particle ever visits neither~\smash{$\partial^i\boxO$} nor~\smash{$\partial^e\boxII$} during this step.
In this case, the configuration inside the small box~\smash{$\boxII$} does not change during this step, and we end up with at most~\smash{$\varepsilon n^d$} sites of~\smash{$\boxO\setminus\boxII$} with exactly one active particle, the other sites of~\smash{$\boxO\setminus\boxII$} being empty.

\item During the fourth and last step, we topple exactly once each of the unstable sites.
We say that step~4 is successful if all the topplings revealed during this step are sleep instructions.
Since we start step~4 with at most one particle on each site, if this step is successful it leads to a stable configuration, with all the particles asleep.
\end{itemize}

We now estimate the probability that each of these steps is successful.
For each~$i\in\{0,\,\ldots,\,4\}$, we denote by~$\calS_i$ the event that the steps~$0$ to~$i$ are all successful.

First, we estimate~$\Pm(\calS_0)$.
At each site~\smash{$x\in\boxO$}, the initial number of particles~$\eta_0(x)$ is drawn according to a Poisson distribution with parameter~$\mu$, the number of particles at distinct sites being independent, whence
\begin{equation}
\label{probaE0}
\Pm\big(\calS_0\big)
\ =\ \big(e^{-\mu}\big)^{\abs{\boxO\setminus\boxII}}
\ \geqslant\ e^{-4d\mu \Ent{an} n^{d-1}}
\ \geqslant\ e^{-4d\mu a n^d}
\ \geqslant\ e^{-cn^d/24}\,,
\end{equation}
where we have used our assumption~(\ref{conditionAlpha}) on the parameter~$a$ in the last inequality.

We now turn to the event~$\calS_1$.
Writing~\smash{$M_{n-2\Ent{an}}$} for the number of particles which jump out of the small box~\smash{$\boxII$} during step~1, it follows from Lemma~\ref{lemmaMn} that, when~$n\to\infty$,
$$\Eml(M_{n-2\Ent{an}})
\ =\ o\big(|B_2|\big)\,.$$
In fact, Lemma~\ref{lemmaMn} deals with the model on~$\Z^d$ instead of the torus but, if one is only interested in the number of particles which jump out of a square box of side~\smash{$n-2\Ent{an}$} when stabilizing only inside this box with particles ignored once they jump out of it, the fact that this box is included in a torus or in~$\Z^d$ has no importance.
Thus, for~$n$ large enough, we have
$$\Eml(M_{n-2\Ent{an}})
\ \leqslant\ \frac{\varepsilon}{4(1-2a)^d}\abs{\boxII}
\ \eqninfty\ \frac{\varepsilon n^d}{4}\,,$$
which implies by Markov's inequality that, for~$n$ large enough,
$$\Pml\Big(M_{n-2\Ent{an}}\,\leqslant\,\varepsilon n^d\Big)
\ \geqslant\ \demi\,.$$
This event being independent of~$\calS_0$, combining this with~(\ref{probaE0}) we get
\begin{equation}
\label{probaE1}
\Pml\big(\calS_1\big)
\ =\ \Pml\Big(\,\calS_0\cap\big\{M_{n-2\Ent{an}}\,\leqslant\,\varepsilon n^d\big\}\,\Big)
\ \geqslant\ \demi\,e^{-cn^d/24}\,.
\end{equation}
Next, we come to the event~$\calS_2$ that the procedure is successful until step~2 included.
Let us denote by~\smash{$\eta_1:\boxO\to\N_\s$} the configuration reached after step~1, with sleeping particles inside~\smash{$\boxII$} and~\smash{$M_{n-2\Ent{an}}$} active particles on the boundary~\smash{$\partial^e\boxII$}.
For every~\smash{$x\in\partial^e\boxII$}, the probability that a particle starting at~$x$ reaches the boundary~\smash{$\partial^e\boxI$} before reaching the tiny box~\smash{$\boxIII$} is the probability that a simple random walk on~$\Z^d$ hits~\smash{$\partial^e\boxI$} before~\smash{$\boxIII$} when started at~\smash{$x\in\partial^e\boxII$}, which by symmetry is at least~$1/2$.
Hence, for every~\smash{$\eta:\partial^e\boxII\to\N$} such that~$\abs{\eta}\leqslant\varepsilon
 n^d$ (so that we condition on an event which has positive probability), we have
$$\Pml\Big(\calS_2\ \Big|\ \calS_1\cap\big\{\eta_1\restrict{\partial^e\boxII}=\eta\big\}\,\Big)
\ \geqslant\ \frac{1}{2^{\abs{\eta}}}
\ \geqslant\ \frac{1}{2^{\varepsilon n^d}}
\ \geqslant\ e^{-cn^d/24}\,,$$
where in the last inequality we used our assumption~(\ref{conditionEpsilon}) on~$\varepsilon$ which ensures that~$\varepsilon\ln 2<\varepsilon<c/24$.
This bound being uniform with respect to~$\eta$, we deduce that
$$\Pml\big(\calS_2\,\big|\,\calS_1\big)\ \geqslant\ e^{-cn^d/24}\,.$$
Combining this with our previous estimate~(\ref{probaE1}), we obtain 
\begin{equation}
\label{probaE2}
\Pml\big(\calS_2\big)
\ \geqslant\ \demi\,e^{-cn^d/12}\,.
\end{equation}

We now study the probability of success of the third step.
Recall that this step starts with a certain number of active particles on~\smash{$\partial^e\boxI$}, the other particles all being alone on their sites inside the small box~\smash{$\boxII$} (see figure~\ref{figTopplingThmLink}).

Recall also that, during this step, we only topple the sites with at least two particles.
Thus, the dynamic on the particles, regardless of their state, active or sleeping, exactly corresponds to Internal Diffusion Limited Aggregation, as described in paragraph~\ref{sectionIDLA}.
Furthermore, we can note that, as soon as a particle reaches a site of~\smash{$\partial^i\boxO\cup\partial^e\boxII$}, step~3 can be stopped and declared unsuccessful.
Doing so, no site of the small box~\smash{$\boxII$} is ever toppled during step~3, so that, during this step, we may forget the particles which lie in~\smash{$\boxII$}.

Conditioned on the realization of~$\calS_2$ and on the configuration~$\eta_2$ reached after step~2, the probability of~$\calS_3$ is given by the probability that the Internal Diffusion Limited Aggregation (IDLA) starting with~$\eta_2$ never reaches neither the internal boundary of the whole box,~\smash{$\partial^i\boxO$} nor the external boundary of the small box,~\smash{$\partial^e\boxII$}.
With the above remark, we can see that this probability does not change if, instead of~$\eta_2$, we start the IDLA with the configuration~$\eta_2$ restricted to~\smash{$\partial^e\boxI$} (because the other particles of~$\eta_2$ lie inside the small box~\smash{$\boxII$} and thus have no influence on the probability of success of step~3).
Therefore, fixing a configuration~\smash{$\eta:\partial^e\boxI\to\N$} such that~$\abs{\eta}\leqslant\varepsilon n^d$, we may write
\begin{align*}
\Pml\Big(\,\calS_3\ \Big|\ \calS_2\cap\big\{\eta_2\restrict{\partial^e\boxI}=\eta\big\}\,\Big)
&\ =\ \calP^\infty\Big(\,\forall\, y\in\partial^i\boxO\cup\partial^e\boxII\quad \sigma_\eta^{\tau}(y)\,=\,0\,\Big)\\
&\ \geqslant\ 1-\sum_{y\in\partial^i\boxO\cup\partial^e\boxII}
\calP^\infty\Big(\,\sigma_\eta^{\tau}(y)\,>\,0\,\Big)\,.\numberthis\label{probaE3cond}
\end{align*}
Yet, defining~$R_n=an/2-2$, it follows from our assumption~(\ref{conditionEpsilon}) on~$\varepsilon$ that, for~$n$ large enough,
$$\varepsilon n^d
\ \leqslant\ \frac{\beta a^d n^d}{4^d}
\ \leqslant\ \beta\left(\frac{an}{2}-2\right)^d
\ =\ \beta R_n^d\,,$$
implying that~$\abs{\eta}\leqslant \beta R_n^d$.
Thus, we may apply Lemma~\ref{lemmaIDLA}, or rather its weakened version~(\ref{lemmaIDLAweak}), to obtain that for every point~\smash{$y\in\partial^i\boxO\cup\partial^e\boxII$}, we have
$$\calP^\infty\Big(\,\sigma_\eta^{\tau}(y)\,>\,0\,\Big)
\ \leqslant\ e^{-K R_n}\,.$$
Plugging this into~(\ref{probaE3cond}), we obtain that
$$\Pml\Big(\,\calS_3\ \Big|\ \calS_2\cap\big\{\eta_2\restrict{\partial^e\boxI}=\eta\big\}\,\Big)
\ \geqslant\ 1-\abs{\partial^i\boxO\cup\partial^e\boxII}e^{-KR_n}
\ \geqslant\ 1-4d n^{d-1} e^{2K-Kan}
\ \geqslant\ \demi\,,$$
for~$n$ large enough.
This bound being uniform with respect to~$\eta$, we deduce that, for~$n$ large enough,
$$\Pml\big(\calS_3\,\big|\,\calS_2\big)\ \geqslant\ \demi\,.$$
Combined with our previous estimate~(\ref{probaE2}) on the probability of~$\calS_2$, this yields
\begin{equation}
\label{probaE3}
\Pml\big(\calS_3\big)
\ \geqslant\ \frac{1}{4}\,e^{-cn^d/12}\,.
\end{equation}
We now deal with the event~$\calS_4$.
At the beginning of step~4, the unstable sites are the sites where the particles spread during step~3, which are all in~\smash{$\boxO\setminus\boxII$}, and the sites of the particles in~\smash{$\boxII\setminus\boxIII$} which have been awaken during step~2.
In any case, there is no unstable site in the tiny box~\smash{$\boxIII$}.
Thus, the number of topplings to perform during step~4 does not exceed
$$\abs{\boxO\setminus\boxIII}
\ \leqslant\ 3d\Ent{an} n^{d-1}
\ \leqslant\ 3dan^d\,,$$
whence, recalling our assumption~(\ref{conditionAlpha}) on the parameter~$a$,
$$\Pml\big(\calS_4\,\big|\,\calS_3\big)
\ \geqslant\ \left(\frac{\lambda}{1+\lambda}\right)^{3dan^d}
\ \geqslant\ e^{-cn^d/24}\,.$$
Combining this with~(\ref{probaE3}), we get
\begin{equation}
\label{probaE4}
\Pml\big(\calS_4\big)
\ \geqslant\ \frac{1}{4}\,e^{-cn^d/8}\,.
\end{equation}
As explained above, if the event~$\calS_4$ is realized, then the whole procedure is successful and it yields an acceptable sequence of topplings~$\alpha$ which stabilizes the initial configuration~$\eta_0$, and which is such that the sites on the boundary~\smash{$\partial^i\boxO$} are never toppled.
Thanks to the monotonicity property given by Lemma~\ref{lemmaAcceptable}, this implies that~$m_{\eta_0}^\tau(x)=0$ for all~\smash{$x\in\partial^i\boxO$}, where we recall that~$m_{\eta_0}^\tau$ is the odometer of a legal sequence which stabilizes~$\eta$ in the torus using the toppling instructions in~$\tau$, as defined in section~\ref{sectionSitewise}.
In this case,~$m_{\eta_0}^\tau$ is a legal sequence of topplings which only topples the sites of~$\boxU=\boxO\setminus\partial^i\boxO$ and which stabilizes the configuration~$\eta_0$ on the torus, and, especially, it also stabilizes~$\eta_0$ on~$\boxU$.
Hence, by the Abelian property (Lemma~\ref{lemmaAbelian}), the occurrence of~$\calS_4$ implies that~\smash{$m_{\eta_0}^\tau=m_{\boxU,\,\eta_0}^\tau$}.
Thus, our result~(\ref{probaE4}) becomes
\begin{equation}
\label{probaOdometerEquals}
\Pml\big(m_{\eta_0}^\tau=m_{\boxU,\,\eta_0}^\tau\big)
\ \geqslant\ \frac{1}{4}\,e^{-cn^d/8}\,.
\end{equation}
Besides, Lemma~\ref{lemmaStabTime} tells us that
$$\Pml\Big(\big\|m_{\boxU,\,\eta_0}^\tau\big\|>e^{cn^d/2}\Big)
\ =\ O\left(\frac{n^{1+3d/2}}{e^{c(n-2)^d/4}}\right)\,,$$
whence, for~$n$ large enough,
$$\Pml\Big(\big\|m_{\boxU,\,\eta_0}^\tau\big\|>e^{cn^d/2}\Big)
\ \leqslant\ \frac{1}{8}\,e^{-cn^d/8}\,.$$
Combining this with~(\ref{probaOdometerEquals}), we have
\begin{align*}
\Pml\Big(\big\|m_{\eta_0}^\tau\big\|\leqslant e^{cn^d/2}\Big)
&\ \geqslant\ \Pml\Big(\big\{m_{\eta_0}^\tau=m_{\boxU,\,\eta_0}^\tau\big\}\cap\big\{\big\|m_{\boxU,\,\eta_0}^\tau\big\|\leqslant e^{cn^d/2}\big\}\Big)\\
&\ \geqslant\ \Pml\big(m_{\eta_0}^\tau=m_{\boxU,\,\eta_0}^\tau\big)
-\Pml\Big(\big\|m_{\boxU,\,\eta_0}^\tau\big\|>e^{cn^d/2}\Big)
\ \geqslant\ \frac{1}{8}\,e^{-cn^d/8}\,.\numberthis\label{probaNtopplings}
\end{align*}
We now turn to the stabilization time~$\calT_n$ of the continuous-time process on the torus, and we write
\begin{align*}
\PARW\Big(\,\calT_n\,<\,e^{cn^d}\,\Big)
&\ \geqslant\ \PARW\Big(\,\big\{\calT_n\,<\,e^{cn^d}\big\}\cap\big\{\big\|m_{\eta_0}^\tau\big\|\leqslant e^{cn^d/2}\big\}\,\Big)\\
&\ =\ \Pml\Big(\big\|m_{\eta_0}^\tau\big\|\leqslant e^{cn^d/2}\Big)
\,-\,\PARW\Big(\,\big\{\calT_n\,\geqslant\,e^{cn^d}\big\}\cap\big\{\big\|m_{\eta_0}^\tau\big\|\leqslant e^{cn^d/2}\big\}\,\Big)\,.\numberthis\label{clocksFinal}
\end{align*}
If we have~\smash{$\calT_n\geqslant e^{cn^d}$} while~\smash{$\big\|m_{\eta_0}^\tau\big\|\leqslant e^{cn^d/2}$}, it means that, during the time interval~\smash{$[0,\,e^{cn^d}]$} the Poisson clocks triggering a toppling event rang at most~\smash{$e^{cn^d/2}$} times, although there were always at least one active particle during this time frame, which entails that a toppling event happens with rate at least~$1+\lambda$.
Hence, taking~$\theta\subset\R_+$ a Poisson process of intensity~$1+\lambda$, we have, using Chebychev's inequality,
\begin{align*}
\PARW\Big(\,\big\{\calT_n\,\geqslant\,e^{cn^d}\big\}\cap\big\{\big\|m_{\eta_0}^\tau\big\|\leqslant e^{cn^d/2}\big\}\,\Big)
&\ \leqslant\ \Proba\Big(\abs{\theta\cap[0,\,e^{cn^d}]}\leqslant e^{cn^d/2}\Big)\\
&\ \leqslant\ \frac{\mathrm{Var}\Big(\abs{\theta\cap[0,\,e^{cn^d}]}\Big)}{\big(e^{cn^d}-e^{cn^d/2}\big)^2}
\ =\ \frac{e^{cn^d}}{\big(e^{cn^d}-e^{cn^d/2}\big)^2}
\ =\ O\big(e^{-cn^d}\big)\,.
\end{align*}
whence, for~$n$ large enough,
\begin{equation}
\label{chebychev}
\PARW\Big(\,\big\{\calT_n\,\geqslant\,e^{cn^d}\big\}\cap\big\{\big\|m_{\eta_0}^\tau\big\|\leqslant e^{cn^d/2}\big\}\,\Big)
\ \leqslant\ \frac{1}{16}\,e^{-cn^d/8}\,.
\end{equation}
Plugging~(\ref{probaNtopplings}) and~(\ref{chebychev}) into~(\ref{clocksFinal}), we obtain that, for~$n$ large enough,
$$\PARW\Big(\,\calT_n\,<\,e^{cn^d}\,\Big)
\ \geqslant\ \frac{1}{16}\,e^{-cn^d/8}\,,$$
which proves~(\ref{nonTempsExp}), concluding the proof of Theorem~\ref{thmLink}.
\end{proof}

%
%

\begin{acks}[Acknowledgments]
A.\ G.\ thanks Leonardo Rolla for his introduction to the model
and his kind hospitality at NYU Shanghai where this work could start.
A.\ G.\ and N.\ F.\ thank the referee for his comments which helped to clarify several parts of the manuscript.
\end{acks}
%


\bibliographystyle{imsart-number} 
\bibliography{arw.bib}       


\end{document}